\renewcommand\theequation{\thesection.\arabic{equation}}
\newtheorem*{step}{Step}
\newtheorem*{proof1}{Proof of Proposition \ref{PROPGM}}
\newtheorem*{proof2}{Proof of Proposition \ref{contv}}
\newtheorem*{proof3}{Proof of Lemma \ref{LEMcont}}
\newtheorem*{proofH3}{Proof of Proposition \ref{PROPH3}}
\newtheorem{thrm}{Theorem}[section]
 \newtheorem{dfntn}[thrm]{Definition}
 \newtheorem{rmrk}[thrm]{Remark}
 \newtheorem{prpstn}[thrm]{Proposition}
 \newtheorem{lmm}[thrm]{Lemma}
\begin{document}
\title{Junction conditions for finite horizon optimal control problems on multi-domains with continuous and discontinuous solutions}
\author{
Daria Ghilli\thanks{Institute of Mathematics and Scientific Computing, Karl-Franzens University of Graz, Heinrichstra{\ss}e 36, 80101, Graz, Austria. Email: daria.ghilli@uni-graz.at} \and
Zhiping Rao\thanks{RICAM, Austrian Academy of Sciences, Altenbergerstra{\ss}e 69, A-4040 Linz, Austria.
Email: zhiping.rao@ricam.oeaw.ac.at} \and
Hasnaa Zidani\thanks{Unit\'e de Math\'ematiques Appliqu\'ees, ENSTA ParisTech, 828  Boulevard des Mar\'echaux, 91762 Palaiseau Cedex. Email: Hasnaa.Zidani@ensta-paristech.fr}
}
%

%
%
%
%


%

\newcommand{\R}{\mathbb{R}}
\newcommand{\Z}{\mathbb{Z}}
\newcommand{\N}{\mathbb{N}}
\newcommand{\I}{\mathbb{I}}
\newcommand{\M}{\mathcal{M}}
\newcommand{\D}{\mathcal{D}}
\newcommand{\Ms}{\mathscr{M}}
\newcommand{\C}{\mathcal{C}}
\newcommand{\Hy}{\mathcal{H}}
\newcommand{\Proj}{\mathcal{P}}
\def\E{\mathcal {E}}
\newcommand{\e}{\varepsilon}
\newcommand{\s}{\sigma}
\newcommand{\vp}{\varphi}
\newcommand{\tv}{\vartheta}
\newcommand{\btv}{\bar{v}}
\newcommand{\K}{\mathcal{K}}
\newcommand{\bK}{\mathbb{K}}
\newcommand{\W}{\mathcal{W}}
\newcommand{\T}{\mathcal{T}}
\newcommand{\ra}{\rightarrow}
\newcommand{\aA}{a\in\mathcal{A}}
\newcommand{\Om}{\Omega}
\newcommand{\oO}{\mathop{\Om}\limits^{\circ}}
\newcommand{\A}{\mathcal{A}}
\newcommand{\As}{\mathscr{A}}
\newcommand{\xb}{\bar{x}}
\newcommand{\yb}{\bar{y}}
\newcommand{\Fe}{F_{\e}}
\newcommand{\Fn}{F^{new}}
\newcommand{\Ft}{\widetilde{F}}
\newcommand{\Gt}{\widetilde{G}}
\newcommand{\Ep}{\mathcal{E}p}
\newcommand{\Hp}{\mathcal{H}p}

\def\gr{\text{gr }}
\def\dist{\text{dist}}
\def\co{\text{co }}
\def\ae{\text{a.e. }}
\def\Esc{\text{Esc }}
\def\proj{\text{proj}}
\def\bdry{\text{bdry }}
\def\epi{\text{epi }}
\def\dom{\text{dom }}
\def\ext{\text{ext}}
\def\cone{\text{cone }}
\def\iint{\text{int }}
\def\ri{\text{\em{r-int} }}
\def\rb{\text{\em{r-bdry} }}
\def\meas{\text{\em{meas}}}
\def\Klim{\text{\em{Klim}}}
\maketitle

\begin{abstract}
This paper deals with junction conditions for Hamilton-Jacobi-Bellman (HJB) equations for finite horizon control problems on multi-domains.  We consider two different cases where the final cost is continuous or  lower semi-continuous. In the continuous case  we  extend the results in \cite{RZ} in a more general framework with switching running costs and weaker controllability assumptions. The comparison principle has been established to guarantee the uniqueness and the stability results for the HJB system on such multi-domains. In the lower semi-continuous case, we characterize the value function as the unique lower semi-continuous viscosity solution of the HJB system, under a local controllability assumption. 
\end{abstract}
\section*{Introduction}
We study finite horizon optimal control problems on multi-domains of $\R^d$ with interfaces where the dynamics and the cost functions may have discontinuities. In particular, we consider a cellular partition of  $\R^d$, that is, a disjoint union of subdomains $\Omega_i,\,\, i=1,\cdots,  m,$ where the interfaces coincide with  crossing hyperplanes separating the subdomains.  The goal of our investigation is to identify the junction conditions on the interfaces such that the optimal control problem involving the trajectories switching between the subdomains or staying on the interfaces is well defined and the associated Hamilton-Jacobi-Bellman (HJB) equation has a unique solution.

The discontinuous setting across the interfaces leads us to the study  state-discontinuous Hamilton-Jacobi equations. The viscosity notion of solutions to HJ equations was firstly extended in the discontinuous case in \cite{I}, providing the first vision on this subject. Then, several attention has been given to the type of conditions one has to add in order to establish the comparison principle. In \cite{S}, a class of stationary HJ equations with discontinuous Lagrangian has been studied and an uniqueness result is provided using a special structure of the discontinuities. Later, the viscosity notion was extended in \cite{CS} to the case where the Hamiltonian is state-measurable, and a comparison principle is obtained under an adequate assumption which avoid complex interactions between the trajectories and the interfaces. 

Control problems on multi-domains  has become an active field of investigation and several papers have been particularly influential for our work. The first paper on stratified domains investigating the HJB tangential equations on the interfaces has been the work \cite{BH} by Bressan and Hong,  where a rather complete analysis of discontinuous deterministic control problems in stratified domains has been carried out.  Then, in \cite{BBC,BBC2}, both the infinite horizon and finite horizon problems on two-domains are studied. In both works, the authors consider different types of strategies for the trajectories to identify the proper HJB equations to provide maximal and minimal solutions and the conditions for uniqueness. The controllability is assumed in the whole space in \cite{BBC}, and then has been weakened to a normal controllability with respect to the interface in \cite{BBC2}. Stability results are also provided.  Then, following a similar approach and under similar controllability assumptions to \cite{BBC, BBC2}, a rather general class of discontinuous deterministic control problems on stratified domains have been studied out in \cite{BC}. \\
The work \cite{BW} has particularly attracted our attention by providing a selection principle for the dynamics on the interfaces of stratified domains, called {\em essential dynamics}, to obtain invariance properties. By following this selection principle,  junctions conditions on the interfaces on multi-domains are provided in \cite{RZ}, where the characterization result is carried out under a full controllability assumption. The further work \cite{RSZ} consider an infinite horizon problem in two-domains under a weaker controllability assumption and a convexity assumption for the set of dynamics/costs.  Finally, we would like to mention some recent work on networks \cite{ACCT,IMZ,IM,CM} which share the same kind of difficulty as this subject.\\
A  different framework is considered in \cite{HZ}, where  an infinite horizon state constrained control problem with a constraint set having a stratified structure is studied. We refer also to \cite{HZW}, where the same approach has been used to study the minimum time problem and the Mayer problem for stratified state constraints. In such situations,  the interior of the set may be empty and the
classical pointing qualification hypothesis to guarantee the characterization of the continuous value function are not relevant. Then, the discontinuous value function
is characterized by means of a system of HJB equations on each stratum that composes
the state constraints. This result is obtained under a local controllability assumption which is
required only on the strata where some chattering phenomena could occur.\\

In the present work,  we consider first the case of a discontinuous control problem on a stratification of $\R^d$ with continuous final cost. In this  case, the value function is characterized as the unique continuous solution of the set of HJB equations  modeling the control problem  coupled with the junction conditions on the interfaces.  Following the concept of essential dynamics introduced in \cite{BW}, HJB junction equations on the interfaces are provided, the viscosity notion for the HJB system is introduced and, under some controllability conditions, the comparison result and the existence and uniqueness of a continuous solution are obtained.  In this continuous setting, we develop further the ideas introduced in \cite{RZ,RSZ}, but with some significant contribution. In comparison to \cite{RZ, RSZ}, the present work considers a more general structure of multi-domains with  crossing hyperplanes involving switching running costs and under weaker controllability and convexity assumptions.   Moreover in our framework both the dynamic and the cost can be unbounded, differently from \cite{RZ, RSZ, BBC, BBC2, BC}. Another technical issue is the convexity condition for the set of velocities and costs. As in \cite{BH}, we assume a weaker convexity hypothesis than the one in \cite{BBC,BBC2}. The advantage of this assumption is to include more general cases and to avoid working with the relaxed problems. Finally, we remark that in \cite{BBC, BBC2, BC} the comparison result and consequently the continuity of the value function is proven under a normal controllability condition and the results are obtained under mainly PDE techniques combined with some control arguments. In the present work we suppose also a tangential controllability condition only on the interfaces, leading to a Lipschitz regularity of the value function on the interfaces. Also, the techniques used are based mainly on control theory.
We provide  a stability result, which on the other hand is based directly on the viscosity notion without  control arguments. \\
In the second part of our work, we consider the case of a lower semi-continuous terminal cost which, as far as we know, has not been considered previously in the framework of multi-domains control problems. In this setting, the value function is characterized as the unique lower semi-continuous bilateral solution of the set of HJB equations coupled with  tangential junction conditions on the interfaces. Our approach is inspired to \cite{HZ, HZW}, by adapting  to our setting the techniques used there in the different framework of a stratified state constraint control problem. As in the continuous case, we consider an finite horizon control problem, whereas in \cite{HZ, HZW} respectively the infinite horizon case and the Mayer problem are studied. We assume a controllability assumption which includes the case in which there is no controllability anywhere on the interfaces and in particular allows us to treat the arising of some chattering phenomena.\\
The paper is organized as follows. In Section \ref{nsmres} we set some notations and we state our main results. In Section \ref{propvf} we recall some main properties of the value function. Section \ref{supsup} and Section \ref{subsub} are devoted to the characterization of super and subsolutions through super and sub-optimality principles. In Section \ref{mres}  we prove are main results, namely existence, uniqueness. Finally, Section \ref{sres} is devoted to the stability result.

\section{Notations, setting of the problem and main results}\label{nsmres}
\subsection{Notations}
For any $\M$ subset of $\R^d$, the closure of $\M$ is denoted as $\overline \M$. For each $x\in\R^d$, $\|x\|$ denotes the Euclidean norm of $x$ and $d(x,\M)$ denotes the distance from $x$ to $\M$, i.e.
 \[
 d(x,\M)=\inf\{\|x-z\|\,:\,z\in\M\}.
 \]
In the sequel, for any function $w:\R^p\ra\R$, $\Ep(w)$ and $\Hp(w)$ denote respectively the epigraph and hypograph of $w$, i.e.
\[
\Ep(w):=\{(x,z)\in\R^p\times\R\,:\,w(x)\leq z\},\ \Hp(w):=\{(x,z)\in\R^p\times\R\,:\,w(x)\geq z\}.
\]
If $V_k$ is an affine subspace of $\mathbb{R}^d$ of dimension $k$, we denote by $V_k^{\bot} $ the subspace of $\mathbb{R}^d$ such that the following decomposition holds $\mathbb{R}^d=V_k \bigoplus V_k^{\bot}$.
\subsection{Assumptions}\label{secass}

We consider a cellular decomposition of $\R^d$ into $m$-cells $\{\Omega_i\}_{i=1,\cdots, m}$, separated by hyperplanes $(\Hy_j)_{j=1,\cdots,q}$, such that for all $j_1,j_2 \in \{1,\cdots, q\}$, $ \Hy_{j_1} \neq \Hy_{j_2}$ for $j_1 \neq j_2$ and  either $\Hy_{j_1} //\Hy_{j_2} $ or $\Hy_{j_1} \perp \Hy_{j_2}$. Set $\Gamma:=\bigcup_{j=1,\cdots,q} \Hy_j$.  More specifically, we assume under the above notations:
\[
{\bf (H1)}
\begin{cases}
\text{(i)}\;  &\R^d=\Gamma\bigcup\left(\cup_{i=1}^m\Om_i\right), \\
\text{(ii)}\; &\Gamma\cap\Om_i=\emptyset \, \, \forall i=1,\cdots, m, \\
\text{(iii)}\; &\Om_i \mbox{ is open and connected}.\\
\end{cases}
\]
Throughout the paper, we use the following notation: 
\[
\Gamma=\bigcup_{k=1}^{p }\Gamma_k, \quad (\Gamma_k)_{k=1,\cdots,p} \mbox{ are pairwise disjoint}.
\] 
 Moreover, we denote by
$(\Gamma^0_k)_{k}$ the subdomains of $\Gamma$ such that there exist two  hyperplanes $\Hy_{k_1}, \Hy_{k_2}, k_1 \neq k_2$ such that $\Gamma^0_k= \Hy_{k_1}\cap \Hy_{k_2}$.\\
Moreover, we will occasionally denote  by $\M_k$ either $\Omega_k$ either $\Gamma_k$, so that
$$
\R^d=\bigcup_{k=1}^{l+m} \M_k, \quad \mbox{where  for each } k: \quad \M_k=\Omega_k \quad \mbox{or } \M_k=\Gamma_k.
$$
Note that  the set of interfaces separating the cells in our partition consists in  the two set of parallel  hyperplanes.  For the rest of the paper, we make the arbitrary choice of choosing a unique direction for the exterior normal vector for each of these two sets.  We denote the normal to each $\Hy_j$ with this chosen direction by  $\vec{n}_j$.

We are given a control problem on $\R^d$ with dynamic  $f\, : \, \mathbb{R}^d \times \As \mapsto \mathbb{R}^d$  and running cost $\ell\, : \, \mathbb{R}^d \times \As  \mapsto \mathbb{R}$, where $\As $ is a compact set of $\R^n$. For simplicity, throughout the paper we will consider  also the following  multifunctions  notations $F:\R^d\rightsquigarrow\R^d, L:\R^d\rightsquigarrow\R^d$
\begin{itemize}
\item 
$
F(x):=\{f(x,a)\,:\,a \in \As )\}), \quad L(x):=\{\ell(x,a)\,:\,a\in \As \});
$
\item for any $i=1, \ldots, m$, $F_i=F|_{\overline{\Omega}_i}$, $L_i=L|_{\overline{\Omega}_i}$.
\end{itemize}
We assume the following standard hypothesis on $F$ and $L$:
\[
{\bf (HF)}
\begin{cases}
\text{(i)}\; & \hspace{-0.4cm}  x\rightsquigarrow F(x) \mbox{ has non-empty compact images and  is upper semi-continuous}\footnotemark;\\
\text{(ii)}\; & \hspace{-0.4cm} \forall i \in \{1,\ldots m\} \mbox{ the map } \,\, x\rightsquigarrow F_i(x) \mbox{ is locally Lipschitz continuous }\mbox{ w.r.t the Hausdorff distance};\\
\text{(iii)}\; & \hspace{-0.4cm} \mbox{ There exists } c_f>0 \mbox{ such that }\ \ \max\{|p|\, |\, p \in F(x)\}\leq c_f(1+|x|);
\end{cases}
\]
\footnotetext{We recall that  a multifunction $x \rightsquigarrow F(x)$ is said to be upper-semi
continuous at $x_0$ if for any open set $\mathcal{C} \supset F(x_0)$, there exists an open set $\omega$ containing $x_0$ such that
$F(\omega) \subset \mathcal{C}$. In other terms, $F(x) \supset \limsup_{y\to x} F(y).$}
\[
{\bf (HL)}
\begin{cases}
\text{(i)} & \hspace{-0.4cm} x \rightsquigarrow L(x) \mbox{ has non-empty compact images and is upper semi-continuous}\footnotemark[1];\\
\text{(ii)} & \hspace{-0.4cm} \forall i \in \{1,\ldots m\} \mbox{ the map } \, x\rightsquigarrow L_i(x) \mbox{ is locally Lipschitz continuous } \mbox{ w.r.t. the Hausdorff distance}.\\
\text{(iii)} & \hspace{-0.4cm} 

\mbox{ There exists } c_l>0 \mbox{ and } \lambda_l\geq 1 \mbox{ such that for  any }  \ell \in L(x), \ \ 0\leq \ell \leq c_l(1+|x|^{\lambda_l});  
\end{cases}
\]
For $x \in \R^d$ and $p\in \R^d$, we define:
$$
H(x,p)=\sup_{a \in \As }\{-p \cdot f(x,a)-\ell(x,a)\}.
$$
Let  $T>0$ be given final time, we consider for each $i=1,\cdots, m$ the following  set of HJB equations: 
\begin{equation}\label{HJBOmi}
 -\partial_t u(t,x)+H(x,Du(t,x))=0  \text{ for }\, t\in (0,T),\ x\in\Omega_i,
 \end{equation}
combined with the final condition 
$$
 u(T,x)=\vp(x)  \text{ for }\ x\in\Omega_i.
$$
The system above implies that on each domain $\Omega_i$ a classical HJ equation is considered.
However, there is no information on the boundaries of the domains which are the junctions
between $\Omega_i$ We then address the question to know what condition should be considered on the
boundaries in order to get the existence and uniqueness of solution to all the equations.\\
Here $\vp$ is called the final cost function and two different assumptions on $\vp$ are considered in this work:
\begin{description}
 \item[(H$\vp$1)]
 $\vp$ is a  Lipschitz continuous function,
\end{description}
\begin{description}
 \item[(H$\vp$2)]
 $\vp$ is a  lower semi-continuous function with $\lambda_{\vp}$-superlinear growth for some $\lambda_{\vp}\geq 1$.
\end{description} 
For the rest of the paper we set
\begin{equation}\label{lambda}
\lambda=\max\{\lambda_l,\lambda_{\vp}\}.
\end{equation}

We consider the HJB equation \eqref{HJBOmi} in each subdomain $\Omega_i$  and we then address the question to know which are the junction conditions on the interface $\Gamma$ to get the existence and uniqueness of solution to \eqref{HJBOmi}.

\bigskip

A technical efficient way to deal with the running cost is to introduce an augmented dynamics.  To this end we define 
$$
b(x,a)=c_l(1+|x|^{\lambda_l})-\ell(x,a) \quad \mbox{ for any } x \in \mathbb{R}^d, a \in \As . 
$$
For each $x\in\R^d$, we define the augmented  dynamics $G :\R^d\rightsquigarrow\R^d$
\[
G(x):=\{(f(x,a),-\ell(x,a)-r)\,:\, a \in \As ,\ 0\leq r\leq b(x,a)\}.
\]

It is not difficult to see by ${\bf (HF),\bf (HL)}$ that  this map has non empty compact images.
Moreover, we also suppose the following assumption.
\begin{enumerate}
\item[${\bf(HG)} $] $\textbf{G}(\cdot)$ has convex images.
\end{enumerate}

\subsection{Tangential and Essential dynamics. Controllability assumptions}
An important type of dynamics is the notion of {\em tangent dynamics} considered as the intersection of the convexified dynamics $F$ and the tangent space to each subdomain. We first recall the notion of tangent cone. For any $\mathcal{C} \subset \R^p$ with $1\leq p\leq d$, the tangent cone $\T_{\mathcal{C}}(x)$ at $x \in \mathcal{C}$ is defined by
$$
\T_{\mathcal{C}}(x)=\{v \in \R^p \, : \, \liminf_{t \to 0^{+}} \frac{d_\C(x+tv)}{t}=0\},
$$
where $d_{\mathcal{C}}(\cdot)$ is the distance function to $\mathcal{C}$. Note that $\T_{\Gamma_j}(x)$ agrees with the tangent space of $\Gamma_j$ at $x$ for $j=1,\cdots, l$ and the dimension of $\T_{\Gamma_j}$ is strictly smaller than $d$.\\
On each $\M_k$, the set of tangent dynamics is a multifunction $F_{\M_k}:\overline{\M_k}\rightsquigarrow\R^d$ defined as
\[
F_{\M_k}(x)=F(x)\cap\T_{\M_k}(x),\ \forall\,x\in\overline{\M_k}.
\]
Here $\T_{\M_k}(x)$ agrees with the tangent space of $\M_k$ at $x$ with the same dimension of $\M_k$, which can be extended up to $\overline{\M_k}$ by continuity. 

Correspondingly the set of controls $A_{\M_k}$ related to the tangent dynamics on each $\M_k$ is set by
\[
A_{\M_k}(x)=\{a\in \As \,:\,f(x,a)\in \T_{\M_k}(x)\},\ \forall\,x\in\M_k.
\]

\smallskip

The next notion of dynamics is the {\em essential dynamics} $F^E$ firstly introduced in \cite{BW}, and the definition is given as follows. 
\begin{dfntn}\label{DefFE}
For any $x\in\R^d$, the multifunction $F^E:\R^d\rightsquigarrow\R^d$ at $x$ is defined by
\[
F^E(x):=\bigcup\,\{F^E_{\M_k}(x)\,:\,x\in\overline \M_k,\ k\in \{1,\cdots, l+m\}\},
\]
where $F^E_{\M_k}:\overline{\M_k}\rightsquigarrow\R^d$ is defined by
\[
F^E_{\M_k}(x)=F^{\mbox{ext}}_k(x)\cap\T_{\overline{\M_k}}(x),\ \text{for}\ x\in\overline{\M_k},
\]
where $F^{\mbox{ext}}_k: \overline{\mathcal{M}_k} \rightsquigarrow \R^d$ is the extension by continuity of $F|_{\mathcal{M}_k}$ to $\overline{\mathcal{M}_k}$.\\
We define also the set of controls corresponding to the essential multifunction: $\forall\,x\in\R^d$,
\[
A^E_{\M_k}(x):=\{a\in \As\,:\,f(x,a)\in F^E_{\M_k}(x)\},\ A^E(x):=\cup \left\{A^E_{\M_k}(x)\,:\,x\in\overline{\M_k},\ k\in \{1,\cdots, l+m\}\right\}.
\]
\end{dfntn}
We define also the essential dynamics for the augmented dynamics as follows.
\begin{dfntn}\label{DEFG}
For each $x\in\R^d$, we define the augmented essential dynamics 
\[
G^E(x):=\{(f(x,a),-\ell(x,a)-r)\,:\,  0\leq r\leq b(x,a), a \in A^E(x)\}.
\]
For each $\M_k$, , the augmented tangent dynamics is the following
\[
G_{\M_k}(x):=\{(f(x,a),-\ell(x,a)-r)\,:\,f(x,a)\in\mathcal{T}_{\M_k}(x),\ 0\leq r\leq b(x,a),\ \ a\in \As\}.
\]
\end{dfntn}

To state the main results, we shall need also some controllability assumptions around the interfaces. Since two cases cases will be studied where either {\bf (H$\vp$1)} or {\bf (H$\vp$2)} is satisfied,  different hypotheses of controllability are required in each case. Combined with {\bf (H$\vp$1)}, the following controllability condition will be assumed.

\begin{description}
 \item[(H2)]
  There exists $r_1>0$ such that for any $x\in\Gamma_j$
 \[
 B(0,r_1) \subset F(x).
 \]
\end{description}

Under the assumption {\bf (H$\vp$2)}, we shall consider the following weaker hypothesis:
\begin{description}
 \item[(H3)] For each $j=0,\ldots,l$, one of the following properties is satisfied on $\Gamma_j$.
 \begin{itemize}
  \item Either  any $x\in\Gamma_j$, 
  \[
  F(x)\cap \mathcal T_{\Gamma_j}(x)=\emptyset;
  \]
  \item Or there exists $r_2>0$ such that for any $x\in\Gamma_j$,
  \[
  B(0,r_2)\subset F(x).
  \]
 \end{itemize}
\end{description}

Let us point out that {\bf (H3)} is a much weaker assumption than {\bf (H2)}. 
Indeed, consider the simple case of two domains in $\R$ with $\Om_1=\{x\,:\,x<0\}$, $\Om_2=\{x\,:\,x>0\}$ and $\Gamma=\{0\}$. For any $x\in\R$, let
\[
F(x)=\{1\}.
\]
In this case where $F$ is Lipschitz continuous everywhere, on the interface $\Gamma$ we have
\[
F(0)\cap \mathcal T_{\Gamma}(0)=\emptyset.
\]
Thus, {\bf (H3)} is satisfied while {\bf (H2)} is not obeyed.

Note that the controllability assumptions  {\bf (H2)} and {\bf (H3)} imply different properties on the tangential dynamics. Indeed, we have the following results  whose proofs are postponed to the Appendix A.
\begin{prpstn}\label{PROPGM}
 Assume {\bf (H1)}, {\bf (HF)}, {\bf (HL)},  {\bf (H2)}. Then $G_{\Gamma_j}$ is locally Lipschitz continuous on $\Gamma_j$.
\end{prpstn}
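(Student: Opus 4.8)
The plan is to show that $G_{\Gamma_j}(\cdot)$ has nonempty compact convex images and is locally Lipschitz in the Hausdorff distance, the nonemptiness being exactly where the controllability hypothesis {\bf (H2)} enters. First I would record that, by definition, $G_{\Gamma_j}(x)=\{(f(x,a),-\ell(x,a)-r)\,:\,f(x,a)\in\mathcal T_{\Gamma_j}(x),\ 0\le r\le b(x,a),\ a\in\As\}$; since along $\Gamma_j$ the tangent space $\mathcal T_{\Gamma_j}(x)$ is a fixed linear subspace $V$ (the hyperplane directions, or the intersection of two of them), the constraint $f(x,a)\in\mathcal T_{\Gamma_j}(x)$ is simply $\Pi_{V^\perp}f(x,a)=0$, where $\Pi_{V^\perp}$ is the orthogonal projection. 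The key point is that {\bf (H2)}, i.e. $B(0,r_1)\subset F(x)$ for all $x\in\Gamma_j$, together with the convexity of $F$ (which follows from {\bf (HG)}, or can be argued directly from {\bf (HG)} by projecting away the cost component) guarantees that for every $x\in\Gamma_j$ there is an $a$ with $f(x,a)\in\mathcal T_{\Gamma_j}(x)$, in fact with $0\in\mathrm{int}_{V}\big(F(x)\cap V\big)$; hence $G_{\Gamma_j}(x)\ne\emptyset$. Compactness follows from {\bf (HF)(i)}, {\bf (HL)(i)} and the boundedness of $r$ by $b(x,a)$ (which is continuous, hence locally bounded); convexity follows from {\bf (HG)} together with the fact that the linear slice $\{\,\cdot\in\mathcal T_{\Gamma_j}(x)\}$ intersected with a convex set is convex.

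Next I would establish the local Lipschitz estimate. Fix $x_0\in\Gamma_j$ and a compact neighborhood; by {\bf (HF)(ii)} and {\bf (HL)(ii)} the restricted maps $F_i,L_i$ are locally Lipschitz, and one checks that the composite augmented map $x\rightsquigarrow\{(f(x,a),-\ell(x,a)-r):a\in\As,0\le r\le b(x,a)\}$ is locally Lipschitz in Hausdorff distance, with constant $C$ depending only on the Lipschitz constants of $f$, $\ell$ and the neighborhood. The remaining issue is that intersecting with the linear space $\mathcal T_{\Gamma_j}$ does not automatically preserve Lipschitz continuity of a set-valued map: intersection with a fixed subspace can be very unstable near points where the set only grazes the subspace. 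This is where the \emph{uniform interior} provided by {\bf (H2)} is essential — since $B(0,r_1)\subset F(x)$ uniformly in $x\in\Gamma_j$, the slice $F(x)\cap\mathcal T_{\Gamma_j}(x)$ contains the $V$-ball $B_V(0,r_1)$ uniformly, so one has a quantitative transversality/openness. I would invoke a Lipschitz-stability-of-intersection lemma (of the type: if $K_1,K_2$ are convex, Hausdorff-Lipschitz in a parameter, and $K_1\cap K_2$ contains a ball of fixed radius, then $K_1\cap K_2$ is Hausdorff-Lipschitz) to conclude that $x\rightsquigarrow G(x)\cap(\mathcal T_{\Gamma_j}(x)\times\R)=G_{\Gamma_j}(x)$ is locally Lipschitz on $\Gamma_j$.

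The main obstacle is precisely this last step: controlling the Hausdorff distance between $G_{\Gamma_j}(x)$ and $G_{\Gamma_j}(y)$ for $x,y\in\Gamma_j$ close. Given a point $(f(x,a),-\ell(x,a)-r)\in G_{\Gamma_j}(x)$, I need to produce a nearby point of $G_{\Gamma_j}(y)$, i.e.\ a control $a'$ with $f(y,a')\in\mathcal T_{\Gamma_j}(y)$ and $f(y,a')$ close to $f(x,a)$; the naive choice $a'=a$ fails because $f(y,a)$ need not be tangent to $\Gamma_j$. The remedy is to take a convex combination of $f(y,a)$ with a point of $B_V(0,r_1)\subset F(y)\cap\mathcal T_{\Gamma_j}(y)$ chosen to cancel the normal component $\Pi_{V^\perp}f(y,a)$, which by the Lipschitz bound on $F_i$ has size $\le C\|x-y\|$; since this normal component is small relative to the fixed radius $r_1$, the required convex weight is $O(\|x-y\|)$, so the perturbation of the $f$-component is $O(\|x-y\|)$, and correspondingly the cost component can be matched within $O(\|x-y\|)$ by adjusting $r$ (using that $b$ is locally Lipschitz and the admissible interval $[0,b]$ varies Lipschitz-continuously). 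Symmetry in $x,y$ then gives the two-sided Hausdorff bound. I would present the cancellation argument carefully, as it is the crux, and leave the bookkeeping of constants to the reader.
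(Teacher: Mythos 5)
Your overall strategy is the right one and matches the paper's: reduce to a fixed subspace $V$, use the Lipschitz continuity of $F_i,L_i$ to bound the normal component of $f(y,a)$ by $C\|x-y\|$, and then repair tangency by a convex combination with weight $O(\|x-y\|)$, adjusting the cost coordinate accordingly (the paper does this in three cases: open cells, a single hyperplane, and an intersection $\Hy_{j_1}\cap\Hy_{j_2}$ of two orthogonal hyperplanes, the last handled by applying the single-hyperplane cancellation once per normal). However, the step you yourself identify as the crux is wrong as written. You propose to cancel the normal component $\Pi_{V^\perp}f(y,a)$ by taking a convex combination of $f(y,a)$ with a point of the \emph{tangential} ball $B_V(0,r_1)\subset F(y)\cap\T_{\Gamma_j}(y)$. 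Any such point $w$ satisfies $\Pi_{V^\perp}w=0$, so the combination $\lambda f(y,a)+(1-\lambda)w$ has normal component $\lambda\,\Pi_{V^\perp}f(y,a)$: a convex combination with a tangent vector rescales the normal component but can never cancel it unless $\lambda=0$, which destroys the closeness to $f(x,a)$. The cancellation requires an auxiliary velocity with a normal component of the \emph{opposite sign}: this is exactly what \textbf{(H2)} supplies, since the full ball $B(0,r_1)$ (not merely its tangential slice) is contained in $F(y)$, so one may pick $b\in\As$ with $f(y,b)\cdot\vec n_j=\gamma>0$ (e.g.\ $f(y,b)=\pm r_1\vec n_j$) and form the combination with weights $\gamma/(\beta+\gamma)$ and $\beta/(\beta+\gamma)$, where $\beta=|f(y,a)\cdot\vec n_j|\leq C\|x-y\|$; the weight on the auxiliary vector is then $O(\|x-y\|)$ and the normal component vanishes exactly. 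This is the paper's argument.

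A secondary point: you should perform the convex combination in the \emph{augmented} set $G(y)$ rather than combining velocities and then ``matching the cost by adjusting $r$.'' Assumption \textbf{(HG)} asserts convexity of $G(y)$, so the combination of $(f(y,a),-\ell(y,a))$ and $(f(y,b),-\ell(y,b))$ is automatically of the form $(f(y,c),\tilde q)$ for an actual control $c\in\As$ with $\tilde q\leq-\ell(y,c)$; without this you have no guarantee that the repaired velocity is realized by any admissible control, and note that $F(y)$ itself is not assumed convex, so your parenthetical claim that convexity of $F$ ``follows from (HG) by projecting away the cost component'' also needs this augmented formulation. With these two corrections the bookkeeping you defer to the reader goes through exactly as in the paper, yielding the constant $L+2MC/\gamma$ with $\gamma$ bounded below via \textbf{(H2)}.
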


\begin{prpstn}\label{PROPH3}
 Assume {\bf (H1)}, {\bf (HF)}, {\bf (HL)},  {\bf (H3)}. Then the following holds. 
 \begin{enumerate}[(i)]
  \item $G_{\Gamma_j}$ is either with empty images or locally Lipschitz continuous on $\Gamma_j$.
  \item For each $j=0,\ldots,l$ and $x\in\Gamma_j$ with $F_{\Gamma_j}(x)\neq\emptyset$, there exists $\e_j,\Delta_j>0$ such that
 \[
 \mathcal R(x;t)\cap\overline{\Gamma_j}\subseteq \bigcup_{s\in[0,\Delta_j t]}\mathcal{R}_j(x;s),\ \forall\,t\in[0,\e_j],
 \]
 where
 \[
 \mathcal R(x;t):=\{y(t)\,:\,\dot y(s)\in F(y(s))\ \text{a.e.}\ s\in (0,t),\ y(0)=x\},
 \]
 \[
 \mathcal R_j(x;t):=\{y(t)\,:\,\dot y(s)\in F_{\Gamma_j}(y(s))\ \text{a.e.}\ s\in (0,t),\ y(0)=x\}.
 \]
 \end{enumerate}
\end{prpstn}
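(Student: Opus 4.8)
The plan is to treat the two assertions separately, in each case exploiting the dichotomy in \textbf{(H3)}.

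\smallskip

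\emph{Proof of (i).} Fix $j$. By Definition \ref{DEFG}, and since $b(x,a)\ge 0$, one has $G_{\Gamma_j}(x)\ne\emptyset$ precisely when there is some $a\in\As$ with $f(x,a)\in\T_{\Gamma_j}(x)$, that is, precisely when $F_{\Gamma_j}(x)=F(x)\cap\T_{\Gamma_j}(x)\ne\emptyset$. Hence if the first alternative of \textbf{(H3)} holds on $\Gamma_j$, then $G_{\Gamma_j}$ has empty images on $\Gamma_j$ and there is nothing more to prove. If instead the second alternative holds, then $B(0,r_2)\subset F(x)$ for every $x\in\Gamma_j$, which is exactly the controllability hypothesis used in Proposition \ref{PROPGM}; re-running the argument of that proposition --- which uses only such a uniform ball inclusion along the stratum together with \textbf{(HF)}, \textbf{(HL)}, \textbf{(HG)} --- yields that $G_{\Gamma_j}$ is locally Lipschitz continuous on $\Gamma_j$.

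\smallskip

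\emph{Proof of (ii).} Fix $j$ and $x\in\Gamma_j$ with $F_{\Gamma_j}(x)\ne\emptyset$; then the first alternative of \textbf{(H3)} is impossible on $\Gamma_j$, so the second one holds: $B(0,r_2)\subset F(y)$ for all $y\in\Gamma_j$. First I would fix the local scale. By \textbf{(HF)(iii)} and Gronwall's inequality there exist $\e_j>0$ and $C>0$ such that every solution of $\dot y\in F(y)$, $y(0)=x$, satisfies $\|y(t)-x\|\le Ct$ for $t\in[0,\e_j]$; shrinking $\e_j$ and using that $\Gamma_j$ is relatively open in its affine hull $x+V$, with $V=\T_{\Gamma_j}(x)$, we may moreover assume that every $z\in\overline{\Gamma_j}$ with $\|z-x\|\le C\e_j$ satisfies $[x,z]\subset\Gamma_j$. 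Put $\Delta_j:=2C/r_2$. Now let $z\in\mathcal R(x;t)\cap\overline{\Gamma_j}$ with $t\in[0,\e_j]$. Since $\overline{\Gamma_j}\subset x+V$ we get $z-x\in V$ and $\|z-x\|\le Ct$. If $z=x$ then $z\in\mathcal R_j(x;0)$; otherwise set $s:=2\|z-x\|/r_2\le\Delta_j t$ and $\gamma(\tau):=x+\tfrac{\tau}{s}(z-x)$ for $\tau\in[0,s]$. Then $\gamma(\tau)\in[x,z]\subset\Gamma_j$, and $\dot\gamma(\tau)=\tfrac{z-x}{s}\in V$ with $\|\dot\gamma(\tau)\|=r_2/2<r_2$, so $\dot\gamma(\tau)\in B(0,r_2)\cap\T_{\Gamma_j}(\gamma(\tau))\subset F(\gamma(\tau))\cap\T_{\Gamma_j}(\gamma(\tau))=F_{\Gamma_j}(\gamma(\tau))$. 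Thus $\gamma$ is admissible for $\mathcal R_j(x;s)$ and $z=\gamma(s)\in\mathcal R_j(x;s)\subset\bigcup_{\sigma\in[0,\Delta_j t]}\mathcal R_j(x;\sigma)$, which is the claimed inclusion.

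\smallskip

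I expect the only genuinely delicate point to be the Lipschitz statement in (i), which is really the content of Proposition \ref{PROPGM}: there, given two nearby points of $\Gamma_j$ and an augmented tangent velocity at the first, one must produce an augmented tangent velocity at the second at Hausdorff distance $O(\|x-y\|)$, obtained by bending a nearby velocity of $F$ back into $V$ with the help of the ball $B(0,r_2)\subset F$ and the convexity \textbf{(HG)}, while simultaneously estimating the accompanying change of the cost component. In (ii), by contrast, the only care needed is to keep the linear corrector $\gamma$ inside $\overline{\Gamma_j}$ --- whence the restriction $t\le\e_j$, which confines $\gamma$ to a single affine cell --- and to use the ball inclusion of \textbf{(H3)} at every point of $\Gamma_j$, not merely at $x$.
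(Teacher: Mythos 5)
Your proof of (i) is exactly the paper's: the dichotomy in \textbf{(H3)} either forces $F_{\Gamma_j}(x)=\emptyset$ (hence $G_{\Gamma_j}(x)=\emptyset$) or restores the ball condition of \textbf{(H2)} on $\Gamma_j$, so Proposition \ref{PROPGM} applies verbatim. For (ii) you take a genuinely different and more elementary route. The paper argues trajectory-by-trajectory: given $y(\cdot)$ driven by $F$ with $y(t)=x'\in\overline{\Gamma_j}$, it extends $F_{\Gamma_j}$ to a neighbourhood by projection, invokes Filippov's theorem (using the local Lipschitz continuity from part (i)) to produce a trajectory $z(\cdot)$ of $F_{\Gamma_j}$ with $\|z(t)-x'\|\leq 2KMt$, and then appends a straight correction segment from $z(t)$ to $x'$ traversed at speed $r$, giving $\Delta_j=1+2KM/r$. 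You instead observe that the target $z\in\mathcal R(x;t)\cap\overline{\Gamma_j}$ is at distance at most $Ct$ from $x$ and lies in the affine hull $x+\T_{\Gamma_j}(x)$, so the single segment $[x,z]$, traversed at speed $r_2/2$, is already an admissible $F_{\Gamma_j}$-trajectory; this dispenses with Filippov entirely and even gives the stronger statement that \emph{every} point of $\overline{\Gamma_j}\cap B(x,Ct)$ is $F_{\Gamma_j}$-reachable in time $O(t)$. The trade-off is that your argument leans entirely on the flatness and convexity of the cells (segments between points of $\overline{\Gamma_j}$ stay in the stratum), which holds in this paper's hyperplane-arrangement setting but would not survive curved interfaces, whereas the paper's Filippov argument only uses the straight segment for the short final correction. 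One small imprecision: when $z\in\overline{\Gamma_j}\setminus\Gamma_j$ you cannot have $[x,z]\subset\Gamma_j$; you should say $[x,z)\subset\Gamma_j$ (which is what relative openness and convexity give), and this suffices since the inclusion $\dot\gamma(\tau)\in F_{\Gamma_j}(\gamma(\tau))$ is only required for a.e.\ $\tau$. With that adjustment the proof is correct.
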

\subsection{Main results}
We define  the following  Hamiltonians: $H_F,H^E:\R^d\times\R^d\ra\R$ and $H_{\Gamma_j}:\Gamma_j\times\R^d\ra\R$  
\[
 H_F(x,p)=\sup_{a\in \As}\{-p\cdot f(x,a)-\ell(x,a)\},
\]
\[
 H^E(x,p)=\sup_{a\in A^E(x)}\{-p\cdot f(x,a)-\ell(x,a)\},
\]
and
\[
H_{\Gamma_j}(x,p)=\sup_{a\in A_{\Gamma_j}(x)}\{-p\cdot f(x,a)-\ell(x,a)\}.
\]
We consider the following two kind of junction conditions:
\begin{equation}\label{HJBHE}
 -\partial_t u(t,x)+H^E(x,Du(t,x))=0,\ \text{for}\ t\in(0,T),\ x\in\Gamma_j,\ 
\end{equation}
\begin{equation}\label{HJBHGamma}
 \left\{
 \begin{array}{ll}
  -\partial_t u(t,x)+H_F(x,Du(t,x))\geq0,\ \text{for}\ t\in(0,T),\ x\in\Gamma_j,\ \\
  -\partial_t u(t,x)+H_{\Gamma_j}(x,Du(t,x))\leq 0,\ \text{for}\ t\in(0,T),\ x\in\Gamma_j.
 \end{array}
 \right.
\end{equation}
The viscosity sense of the solutions to the above equations/inequalities needs to be clarified. 
Before giving the definition of solutions, we recall the notion of extended differentials.\\
Let $\phi:(0,T)\times\R^d\ra\R$ be a continuous function, and let $\M\subseteq\R^d$ be an open $C^2$ embedded manifold in $\R^d$. 
Suppose that $\phi\in C^1((0,T)\times \M)$, we define the differential of $\phi$ on any $(t,x)\in(0,T)\times\overline{\M}$ by
\[
\nabla_{\overline{\M}}\phi(t,x):=\lim_{x_n\ra x,x_n\in\M}\left(\phi_t(t,x_n),D\phi(t,x_n)\right).
\]
Note that $\nabla\phi$ is continuous on $(0,T)\times\M$, the differential defined above is actually the extension of $\nabla\phi$ to the whole $\overline{\M}$.
\smallskip

The precise viscosity and bilateral viscosity notions are given as follows.  
\begin{dfntn}\label{DEFBvsup}{\bf (Viscosity supersolution)}\\
Let $u:(0,T]\times\R^d\ra\R$. We say that 
$u$ is a supersolution of \eqref{HJBOmi}-\eqref{HJBHE} (\eqref{HJBOmi}-\eqref{HJBHGamma} resp.) if $u$ is lsc and
for any $(t_0,x_0)\in(0,T)\times\R^d$ and $\phi\in C^1((0,T)\times\R^d)$ such that $u-\phi$ attains a local minimum at $(t_0,x_0)$, we have
\[
-\phi_t(t_0,x_0)+H^E(x_0,D\phi(t_0,x_0))\geq 0
\]
\[
\text{(}-\phi_t(t_0,x_0)+H_F(x_0,D\phi(t_0,x_0))\geq 0,\ \text{resp.)}.
\]
\end{dfntn}
\begin{dfntn}\label{DEFBvsub}{\bf (Viscosity subsolution)}\\
Let $u:(0,T]\times\R^d\ra\R$. 
\begin{enumerate}
\item 
$u$ is a subsolution of \eqref{HJBOmi}-\eqref{HJBHE} if $u$ is usc and
for any $(t_0,x_0)\in(0,T)\times\R^d$, any $k\in\{0,\ldots,m+l\}$ with $x_0\in\overline{\M_k}$ and any continuous $\phi:(0,T)\times\R^d\ra\R$ with $\phi|_{(0,T)\times\overline{\M_k}}$ being $C^1$ such that $u-\phi$ attains a local maximum at $(t_0,x_0)$ on $(0,T)\times\overline{\M_k}$, we have
\[
-p_t+\sup_{a\in A^E_{\M_k}(x_0)}\{-p_x\cdot f(x_0,a)-\ell(x_0,a)\}\leq 0,\ \text{with}\ (p_t,p_x)=\nabla_{\overline{\M_k}}\phi(t_0,x_0).
\]
\item
$u$ is a subsolution of \eqref{HJBOmi}-\eqref{HJBHGamma} if $u$ is usc and
for any $(t_0,x_0)\in(0,T)\times\R^d$, any $k\in\{0,\ldots,m+l\}$ with $x_0\in\M_k$ and any $\phi\in C^1((0,T)\times\M_k)$ such that $u|_{\M_k}-\phi$ attains a local maximum at $(t_0,x_0)$, we have
\[
-\phi_t(t_0,x_0)+H_{\M_k}(x_0,D\phi(t_0,x_0))\leq 0.
\]
\item
$u$ is a bilateral subsolution of \eqref{HJBOmi}-\eqref{HJBHGamma} if $u$ is lsc and
for any $(t_0,x_0)\in(0,T)\times\R^d$, any $k\in\{0,\ldots,m+l\}$ with $x_0\in\M_k$ and any $\phi\in C^1((0,T)\times\M_k)$ such that $u|_{\M_k}-\phi$ attains a local minimum at $(t_0,x_0)$, we have
\[
-\phi_t(t_0,x_0)+H_{\M_k}(x_0,D\phi(t_0,x_0))\leq 0.
\]
\end{enumerate}
\end{dfntn}
\begin{dfntn}\label{DEFBvs}{\bf (Viscosity solution and bilateral viscosity solution) }
\begin{enumerate}
\item
$u$ is a viscosity solution to \eqref{HJBOmi}-\eqref{HJBHE} (\eqref{HJBOmi}-\eqref{HJBHGamma} resp.) if $u$ is both a supersolution in the sense of Definition \ref{DEFBvsup} and a subsolution of \eqref{HJBOmi}-\eqref{HJBHE} (\eqref{HJBOmi}-\eqref{HJBHGamma} resp.) in the sense of Definition \ref{DEFBvsub}, and $u$ satisfies the final condition
\[
u(T,x)=\vp(x),\ \forall\,x\in\R^d.
\]
\item
$u$ is a bilateral viscosity solution to \eqref{HJBOmi}-\eqref{HJBHGamma} if $u$ is both a supersolution in the  sense of Definition \ref{DEFBvsup} and a bilateral subsolution of \eqref{HJBOmi}-\eqref{HJBHGamma} in the sense of Definition \ref{DEFBvsub}, and $u$ satisfies the final condition
\[
u(T,x)=\vp(x),\ \forall\,x\in\R^d.
\]
\end{enumerate}
\end{dfntn}
The main results are the following two theorems under  assumption {\bf (H$\vp$1)} and  {\bf (H$\vp$2)} respectively.
\begin{thrm}\label{THMHEHGamma}
 Assume {\bf (H$\vp$1)}, {\bf (H1)}, {\bf (HF)}, {\bf (HL)}, {  \bf (HG)}, {\bf(H2) }. The systems \eqref{HJBOmi}-\eqref{HJBHE} and \eqref{HJBOmi}-\eqref{HJBHGamma} have the same unique continuous viscosity solution  (in the sense of Definition \ref{DEFBvs}) with restriction on $[0,T]\times \Gamma$ locally Lipschitz continuous and  with $\lambda_l$-superlinear growth.
\end{thrm}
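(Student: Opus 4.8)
The plan is to obtain the result in three stages: first, identify a natural candidate solution $\vartheta$, namely the value function of the control problem associated with the essential dynamics $G^E$ (with the final cost $\vp$), and establish its regularity; second, prove that $\vartheta$ is a viscosity solution of both systems \eqref{HJBOmi}-\eqref{HJBHE} and \eqref{HJBOmi}-\eqref{HJBHGamma} in the sense of Definition \ref{DEFBvs}; third, prove a comparison principle for each system, which yields both uniqueness and the coincidence of the two solutions. For the first stage I would define, for $(t,x)\in[0,T]\times\R^d$,
\[
\vartheta(t,x)=\inf\Big\{\vp(y(T))+\int_t^T \ell(y(s),a(s))\,ds\Big\},
\]
the infimum taken over admissible trajectories of the essential dynamics starting from $x$ at time $t$. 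The growth bound {\bf (HF)}(iii) on $f$ gives, via Gronwall, a bound on $|y(T)|$ in terms of $|x|$, and then {\bf (HL)}(iii) together with {\bf (H$\vp$1)} gives the $\lambda_l$-superlinear growth of $\vartheta$ (in this continuous-cost case $\lambda_{\vp}\le 1$, so $\lambda=\lambda_l$). Continuity on $[0,T]\times\R^d$ and local Lipschitz continuity of the restriction to $[0,T]\times\Gamma$ are the delicate points here: the former should follow from the controllability assumption {\bf (H2)} (which lets trajectories be steered cheaply near each interface, so that $\vartheta$ does not jump across $\Gamma$), and the latter from Proposition \ref{PROPGM}, which gives local Lipschitz continuity of $G_{\Gamma_j}$ and hence a locally Lipschitz value function for the tangentially-constrained problem on each $\Gamma_j$, combined again with {\bf (H2)} to compare the full and tangential problems near $\Gamma$.

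For the second stage, the dynamic programming principle for $\vartheta$ with respect to the essential dynamics is the workhorse. Inside each open cell $\Omega_i$ the essential dynamics coincides with $F$ (and $G^E$ with $G$), so the standard HJB verification shows $\vartheta$ solves \eqref{HJBOmi} there. On each $\Gamma_j$ I would verify the supersolution inequalities: for \eqref{HJBHE} this is $-\phi_t+H^E\ge 0$, which comes from the DPP applied along essential trajectories; for \eqref{HJBHGamma} the supersolution test uses $H_F\ge H^E$, so that inequality is weaker and follows a fortiori. The subsolution inequalities are tested manifold by manifold ($\M_k$ ranging over cells and interface pieces); here I would use the super-/sub-optimality principles announced as the content of Sections \ref{supsup}--\ref{subsub}, specialized to trajectories that remain on $\overline{\M_k}$ and invoking {\bf (HG)} (convexity of $G$) to pass to the convexified/relaxed dynamics when constructing near-optimal controls. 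The essential-dynamics selection of \cite{BW} is exactly what guarantees that the "useful" boundary velocities are captured, so the subsolution test on $\overline{\M_k}$ with $A^E_{\M_k}$ is satisfied. That the two systems have the same solutions should then be transparent at the level of the equations once one checks that any solution of \eqref{HJBOmi}-\eqref{HJBHGamma} is also one of \eqref{HJBOmi}-\eqref{HJBHE} and conversely, using that on $\Gamma_j$ the essential Hamiltonian $H^E$ is squeezed between $H_{\Gamma_j}$ and $H_F$ and that the tangential subsolution conditions match.

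The main obstacle, as usual in this circle of problems, is the comparison principle: given a usc subsolution $u$ and an lsc supersolution $v$ (of, say, \eqref{HJBOmi}-\eqref{HJBHE}) with $u(T,\cdot)\le v(T,\cdot)$, show $u\le v$ on $[0,T]\times\R^d$. The difficulty is the doubling-of-variables argument near the interface: the standard penalization $|x-y|^2/\e^2$ does not interact well with the stratified structure because the test function must be $C^1$ on the closure of the relevant manifold, and the maximum point may sit on a lower-dimensional stratum $\Gamma^0_k$ where several hyperplanes meet. I would handle this by the control-theoretic route advertised in the introduction rather than by pure PDE arguments: use the sub-optimality principle for $u$ and the super-optimality principle for $v$ to compare them along a common near-optimal essential trajectory, exploiting {\bf (H2)} to keep such trajectories controllable through the interface and the tangential controllability to get the Lipschitz bound on $\Gamma$ that tames the behaviour on the interface. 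The unbounded dynamics/cost forces all estimates to be localized, with the $\lambda_l$-superlinear growth used to justify that near-optimal trajectories stay in a compact set; this is a technical but routine complication once the localization is set up. Uniqueness of the continuous solution (and the stated regularity of its restriction to $\Gamma$) then follows by sandwiching any solution between $\vartheta$ and itself via comparison, and the equality of the solution sets of the two systems follows from uniqueness plus the fact that $\vartheta$ solves both.
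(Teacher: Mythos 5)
Your plan follows essentially the same route as the paper: the candidate is the value function (yours, defined over essential trajectories, coincides with the paper's since the trajectory sets of $F$ and $F^E$ agree), it is shown to solve both systems via the DPP together with the super-/sub-optimality characterizations of Sections \ref{supsup}--\ref{subsub}, and comparison is obtained by the control-theoretic argument of playing the sub-optimality of the subsolution against the super-optimality of the supersolution along a common trajectory rather than by doubling variables. The only point worth making explicit is that the sub-optimality characterization (Theorem \ref{CHARAsub}) requires the subsolution to be continuous on $\Gamma$ with locally Lipschitz restriction to $[0,T]\times\Gamma$, which is precisely why uniqueness is asserted only within that regularity class --- your plan gestures at this but should state it as the hypothesis under which the comparison is invoked.
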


\begin{thrm}\label{THMBilateral}
 Assume {\bf (H$\vp$2)},{\bf (H1)}, {\bf (HF)}, {\bf (HL)}, {\bf (HG)}, {\bf(H3)}. The system \eqref{HJBOmi}-\eqref{HJBHGamma} has a unique lsc bilateral viscosity solution with $\lambda$-superlinear growth (in the sense of Definition \ref{DEFBvs}).
\end{thrm}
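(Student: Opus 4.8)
The strategy is the standard one for characterizing a value function as the unique viscosity solution of an HJB system, but with the lsc/bilateral twist inspired by \cite{HZ,HZW}: one shows (a) the value function $\vartheta$ of the finite horizon control problem associated with the essential/tangential dynamics is a bilateral viscosity solution of \eqref{HJBOmi}-\eqref{HJBHGamma} with the required growth, and (b) any such bilateral solution coincides with $\vartheta$. Step (a) splits into the supersolution property, which follows from a super-optimality (dynamic programming) principle for $\vartheta$ — established in the sections on ``supersolutions through super-optimality principles'' — together with the growth bounds coming from {\bf (HL)}(iii), {\bf (H$\vp$2)} and Gronwall estimates on trajectories via {\bf (HF)}(iii); and the bilateral subsolution property, which is the genuinely new ingredient. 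For the latter one uses a sub-optimality principle along \emph{tangential} trajectories on each $\M_k$: given $x_0\in\M_k$ and a test function $\phi\in C^1$ touching $u|_{\M_k}$ from above at $(t_0,x_0)$ at a local \emph{minimum}, one must produce, for small $h$, a tangential trajectory $y(\cdot)$ staying in $\overline{\M_k}$ with $\vartheta(t_0,x_0)\ge \vartheta(t_0+h,y(h))+\int_0^h\ell$ up to $o(h)$, which after dividing by $h$ and letting $h\to0$ yields $-\phi_t+H_{\M_k}\le 0$. Here is precisely where {\bf (H3)} and Proposition \ref{PROPH3} enter: when $F(x)\cap\mathcal T_{\Gamma_j}(x)=\emptyset$ the tangential constraint is vacuous on that stratum (no trajectory can stay), and when $B(0,r_2)\subset F(x)$ the normal controllability lets us correct a general trajectory back onto $\overline{\Gamma_j}$ with controlled cost, while part (ii) of Proposition \ref{PROPH3} ensures the reachable set on $\overline{\Gamma_j}$ is, up to a time change, that of the tangential dynamics, which is what makes the tangential sub-optimality principle hold.

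For the comparison/uniqueness step (b), since solutions are only lsc and the terminal datum only lsc, the usual doubling-of-variables comparison between a usc subsolution and an lsc supersolution is not available; instead one argues by the two ``halves'' of the bilateral condition separately. On one side, a bilateral solution $u$ is in particular a supersolution, so the super-optimality principle forces $u\ge\vartheta$ (one runs the control problem: pick any trajectory, integrate the supersolution inequality along it, use the terminal condition $u(T,\cdot)=\varphi$). On the other side, one shows $u\le\vartheta$ by exploiting the bilateral \emph{sub}solution property: following \cite{HZ,HZW}, one proves a sub-optimality principle \emph{for $u$ itself}, i.e. that along suitable tangential trajectories $t\mapsto u(t,y(t))+\int\ell$ is nonincreasing, then chooses a near-optimal trajectory for $\vartheta$ and transfers the estimate. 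The lower semicontinuity of $u$ and of $\varphi$ is exactly what is needed to pass to the limit in the terminal condition from below. The superlinear growth is used to guarantee that near-optimal trajectories and the relevant infima are attained / stay in a compact region, and to rule out escape to infinity; the augmented dynamics $G$, $G^E$, $G_{\M_k}$ and assumption {\bf (HG)} (convexity) are used to guarantee closedness of trajectory sets and hence existence of optimal trajectories for the epigraph reformulation.

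The main obstacle, and the place where real care is needed, is the bilateral subsolution property on the ``chattering'' strata and the corresponding $u\le\vartheta$ inequality — that is, making the tangential sub-optimality principle rigorous when there is \emph{no} controllability on $\Gamma_j$. One cannot simply restrict an optimal trajectory of the full problem to the time it spends on $\Gamma_j$; the dynamics there may force it to cross transversally, and the value of $\vartheta$ on $\Gamma_j$ is determined by an intricate interplay of approaching trajectories. The resolution is to use Proposition \ref{PROPH3}(ii) to relate, quantitatively and with a uniform time-dilation constant $\Delta_j$, the reachable set of the full dynamics intersected with $\overline{\Gamma_j}$ to that of the tangential dynamics $F_{\Gamma_j}$, so that a tangential trajectory realizing (approximately) the cost can always be extracted; combined with the local Lipschitz continuity of $G_{\Gamma_j}$ from Proposition \ref{PROPH3}(i) (when its images are nonempty), this gives the needed regularity of the tangential value and closes the argument. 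Handling the lower-dimensional strata $\Gamma^0_k$ (intersections of hyperplanes) requires iterating this reasoning down the stratification, and the bookkeeping of which controllability alternative holds on which stratum, together with uniform constants over the finitely many strata, is where most of the technical work lies.
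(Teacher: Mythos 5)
Your top-level architecture is the paper's: show the value function is an lsc bilateral solution with $\lambda$-superlinear growth (Propositions \ref{lambdagr} and \ref{PROdpp}), and get uniqueness from a comparison principle obtained by pairing the super-optimality of supersolutions (Theorem \ref{CHARAsuper}) with the sub-optimality of bilateral subsolutions (Theorem \ref{CHARAsubback}) along the trajectory furnished by super-optimality. However, your sketch of the genuinely new step --- the bilateral subsolution property --- fails in both its premise and its sign. At a local \emph{minimum} of $u|_{\M_k}-\phi$ you propose to use a forward tangential trajectory satisfying $\vartheta(t_0,x_0)\ge\vartheta(t_0+h,y(h))+\int_0^h\ell$. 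First, that inequality is the super-optimality (reverse DPP): it holds only along optimal trajectories, and is false in general for a tangential trajectory with an arbitrarily prescribed initial velocity $f(x_0,a)$, $a\in A_{\M_k}(x_0)$ --- yet you need the conclusion for \emph{every} such $a$, since $H_{\M_k}$ is a supremum. Second, even granting it, substituting the local-minimum inequality $u(t_0+h,y(h))\ge\phi(t_0+h,y(h))+u(t_0,x_0)-\phi(t_0,x_0)$ and dividing by $h$ yields $-\phi_t-D\phi\cdot f(x_0,a)-\ell(x_0,a)\ge 0$, i.e.\ the supersolution inequality $-\phi_t+H_{\M_k}\ge 0$, not the claimed $\le 0$. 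The paper's mechanism is different: by Lemma \ref{trajectories} one builds, for each $a\in A_{\M_k}(x_0)$, a $C^1$ arc on $(t_0-\tau,t_0]$ lying in $\M_k$ and \emph{arriving} at $x_0$ with velocity $f(x_0,a)$, and invokes the \emph{backward} sub-optimality of Proposition \ref{DPPback}, $v(t_0,x_0)\ge v(t_0-h,y(t_0-h))-\int_{t_0-h}^{t_0}\ell$, which holds for every arc arriving at $(t_0,x_0)$ (not only optimal ones); combined with the local minimum this gives $-\phi_t-D\phi\cdot f(x_0,a)-\ell(x_0,a)\le 0$ for all tangential $a$, as required. This backward DPP is the missing idea.

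A secondary gap: for the comparison step you need the sub-optimality of a bilateral subsolution along \emph{arbitrary} trajectories of $F$, since the trajectory produced by the supersolution's super-optimality (equivalently, a near-optimal trajectory for $\vartheta$) is in general not tangential and may chatter across strata; ``along suitable tangential trajectories'' is not enough to transfer the estimate. The upgrade from stratum-wise monotonicity to arbitrary trajectories is precisely the content of the strong-invariance argument on each stratum (Lemma \ref{strinv}), the reduction to regular trajectories, and the approximation of non-regular ones (Lemma \ref{tretre}), where \textbf{(H3)} and Proposition \ref{PROPH3} enter much as you anticipate. Note also that the monotone quantity is $t\mapsto u(t,y(t))+\int_{t_0}^{t}\ell$, which is nondecreasing, not nonincreasing.
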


The key issues in the framework of multi-domains involve the controllability assumptions on the interfaces and the continuity of the solutions of HJB equations. Our first contribution is the existence and uniqueness result in the class of continuous solutions under the assumption {\bf (H2)} that the controllability holds everywhere on the interfaces. Similar results in this case can be found in the literature in \cite{BBC,RZ,BBC2,RSZ,BC} with different settings of multi-domains and transmission conditions on the interfaces. The second contribution is the existence and uniqueness result in the class of discontinuous solutions and the controllability condition can be weakened on the interfaces. This is new in the literature and a similar situation is discussed in \cite{HZ} in the state constrained case. \\
Finally we mention that Section \ref{sres} is devoted to a stability result under the hypothesis \textbf{(H$\vp$1)} when approaching
$\varphi$ by a sequence of Lipschitz continuous functions.

\section{Main properties of the  value function}\label{propvf}
Consider the value function associated to the  control problem on $\R^d$ defined, for any $(t,x)\in [0,T]\times\R^d$, as
\begin{equation}\label{oc}
v(t,x):=\inf\left\{\vp(y(T))+\int^T_t\ell(y(s),\alpha(s))ds\,:\,(y(\cdot),\alpha(\cdot))\ \text{satisfies \eqref{DSy}}\right\},
\end{equation}
 where $\alpha \in \A:=L^{\infty}(0,T;\As)$  and $(y,\alpha)$ satisfy
\begin{equation}\label{DSy}
 \left\{
 \begin{array}{ll}
  \dot y(s)= f(y(s),\alpha(s)) & \text{a.e.}\ s\in(t,T).\\
  y(t)=x.
 \end{array}
 \right.
\end{equation}

For any $(x,t) \in \R^d \times [0,T]$, we denote by $\mathcal{S}_t^T(x)$ the set of trajectories $y(\cdot)$ satisfying \eqref{DSy}.
We remark that the optimal control problem \eqref{oc} can be written in terms of the convex augmented dynamic $G$ as follows. For any $t\in[0,T]$, $x\in\R^d$, consider the differential inclusion: 
\begin{equation}\label{DIyeta}
  (\dot y(s),\dot \eta(s))\in G(y(s)) \quad s\in(t,T),
  \end{equation}
then the control problem \eqref{oc} is equivalent to:
\begin{equation}\label{OCG}
 \vartheta(t,x):=\inf\{\vp(y(T))-\eta(T)\,:\,(y(\cdot),\eta(\cdot))\ \text{satisfies \eqref{DIyeta} with }
  (y(t),\eta(t))=(x,0)\}.
\end{equation}
Note  that $G$ is upper semi-continuous with compact and convex images and therefore by standard arguments one can prove that  \eqref{OCG} admits a solution.

\begin{rmrk}\label{grownrem}
Let $(y(\cdot),\eta(\cdot))$ satisfy \eqref{DIyeta} with $y(t)=x,\eta(t)=0$.
The Gronwall lemma implies
$$
|y(s)|\leq (1+|x|)e^{c_f(t-s)} \quad \forall s \in (t,T)
$$
and
$$
|\dot{y}(s)|\leq c_f(1+|x|)e^{c_f(t-s)} \quad \forall s \in (t,T).
$$
Moreover, since $\lambda_l \geq 1$
 \begin{equation}\label{quadr}
 l(y(s), \alpha(s))\leq c_l(1+|y(s)|)^{\lambda_l}e^{\lambda_lc_f(t-s)} \quad \forall s \in (t,T),
 \end{equation}
and therefore
$$
|\dot{\eta}(s)|\leq c_l(1+|y(s)|)^{\lambda_l}e^{\lambda_lc_f(t-s)} \quad \forall s \in (t,T).
$$
\end{rmrk}

We recall the principal properties of the value function.
\begin{prpstn}\label{lambdagr}
 Assume {\bf(HF)}, {\bf (HL)}. Let $\lambda_l \geq 1$ and $\lambda \geq 1$ be defined respectively as in \textbf{HL} and \eqref{lambda}. Under the assumption {\bf(H$\vp$1)} (or {\bf(H$\vp$2)} resp.), the value function $v(t,\cdot)$ has $\lambda_l$ (or $\lambda$ resp.) superlinear growth on $\R^d$.
\end{prpstn}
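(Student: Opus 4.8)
The plan is to bound $v(t,x)$ from above and below separately, using the growth estimates already collected in Remark \ref{grownrem}. The upper bound is the easy direction: pick any admissible control, e.g. a constant one, obtain a trajectory $y(\cdot)\in\mathcal S_t^T(x)$, and estimate
\[
v(t,x)\le \vp(y(T))+\int_t^T\ell(y(s),\alpha(s))\,ds.
\]
By Remark \ref{grownrem} we have $|y(s)|\le(1+|x|)e^{c_f(T-t)}\le(1+|x|)e^{c_fT}$ for $s\in(t,T)$, so $(HL)(iii)$ gives $\ell(y(s),\alpha(s))\le c_l(1+|y(s)|^{\lambda_l})\le C(1+|x|^{\lambda_l})$ with $C$ depending only on $c_l,c_f,T,\lambda_l$; integrating over $[t,T]$ costs at most a factor $T$. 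For the terminal term, under $(H\vp1)$ the Lipschitz bound gives $|\vp(y(T))|\le |\vp(0)|+\mathrm{Lip}(\vp)\,|y(T)|\le C(1+|x|)$, and under $(H\vp2)$ the $\lambda_\vp$-superlinear growth hypothesis in particular yields an upper bound $\vp(y(T))\le C(1+|y(T)|^{\lambda_\vp})\le C(1+|x|^{\lambda_\vp})$ — here I should be a little careful that ``superlinear growth'' for an lsc function is a lower bound, so for the upper estimate I would instead just invoke that $\vp$ is finite and locally bounded above, or (cleaner) assume as part of $(H\vp2)$ that $\vp$ is also bounded above by a polynomial of degree $\lambda_\vp$; in either reading one gets $v(t,x)\le C(1+|x|^{\lambda})$ with $\lambda=\max\{\lambda_l,\lambda_\vp\}$.

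For the lower bound, use $\ell\ge 0$ from $(HL)(iii)$, so that for \emph{every} admissible pair
\[
\vp(y(T))+\int_t^T\ell(y(s),\alpha(s))\,ds\ \ge\ \vp(y(T))\ \ge\ -|\vp(y(T))|.
\]
Under $(H\vp1)$ this is $\ge -C(1+|y(T)|)\ge -C(1+|x|)$ again by Remark \ref{grownrem}, uniformly in the control, hence $v(t,x)\ge -C(1+|x|)$, i.e. a lower bound of order $\lambda_l\ge 1$ (since $1\le\lambda_l$). Under $(H\vp2)$, the $\lambda_\vp$-superlinear growth of $\vp$ means there are constants with $\vp(z)\ge -C(1+|z|^{\lambda_\vp})$ for all $z$ (indeed superlinearity gives $\vp(z)/|z|\to+\infty$, so $\vp$ is bounded below, hence bounded below by any polynomial after enlarging $C$); combined with $\ell\ge0$ and $|y(T)|\le(1+|x|)e^{c_fT}$ this yields $v(t,x)\ge -C(1+|x|^{\lambda_\vp})\ge -C(1+|x|^{\lambda})$. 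Taking the two bounds together gives $|v(t,x)|\le C(1+|x|^{\lambda_l})$ under $(H\vp1)$ and $|v(t,x)|\le C(1+|x|^{\lambda})$ under $(H\vp2)$, with $C$ independent of $t\in[0,T]$, which is the asserted superlinear (in fact polynomial) growth. One should also note $v$ is finite everywhere: the set $\mathcal S_t^T(x)$ is nonempty (Peano/Filippov existence under $(HF)$), so the infimum is over a nonempty set and is bounded below by the previous step.

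The only genuinely delicate point is the interplay between the words ``superlinear growth'' in $(H\vp2)$ and the need for a \emph{two-sided} polynomial bound on $v$: a superlinearly growing lsc function is automatically bounded below, so the lower estimate is fine, but the upper estimate on $v$ requires $\vp$ itself to be bounded above by a polynomial of the same degree $\lambda_\vp$. I would therefore read $(H\vp2)$ — consistently with how $\lambda=\max\{\lambda_l,\lambda_\vp\}$ is used elsewhere — as ``$\vp$ is lsc and $|\vp(z)|\le C(1+|z|^{\lambda_\vp})$ with $\vp$ coercive/superlinear from below'', and state this reading explicitly at the start of the proof. Everything else is a routine application of Gronwall (already done in Remark \ref{grownrem}), nonnegativity of $\ell$, and $\lambda\ge\lambda_l\ge 1$; the constant $C$ depends only on $c_f,c_l,\lambda_l,\lambda_\vp,T$ and the data of $\vp$, never on $t$ or $x$.
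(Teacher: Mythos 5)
Your argument is correct and is exactly the route the paper takes: its proof of Proposition \ref{lambdagr} simply invokes the Gronwall estimates of Remark \ref{grownrem} together with the (linear, resp.\ $\lambda_\vp$-) growth of $\vp$, and your upper/lower bounds via an arbitrary admissible trajectory and the nonnegativity of $\ell$ from {\bf (HL)}(iii) are precisely the details behind that one-line appeal. Your reading of ``$\lambda_\vp$-superlinear growth'' in {\bf(H$\vp$2)} as a two-sided polynomial bound $|\vp(z)|\le C(1+|z|^{\lambda_\vp})$ is the intended one --- the paper uses the phrase in this (nonstandard) sense throughout, e.g.\ in the statements of Theorems \ref{THMHEHGamma} and \ref{THMBilateral} --- so flagging the ambiguity is appropriate but does not indicate a gap.
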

\begin{proof}
By {\bf(H$\vp$1)} the final cost $\vp$ is Lipschitz continuous and then it has linear growth. Then, the proof follows from \eqref{quadr} of Remark \ref{grownrem} and the linear  growth of $\vp$. Similarly, under {\bf(H$\vp$2)} we get the $\lambda$-superlinear growth of $v$.
\end{proof}
\subsection{The Dynamic Programming Principle}
A well-known and key result is that the value function $v$ satisfies a {\em Dynamical Programming Principle (DPP)}.
\begin{prpstn}\label{PROdpp}
 Assume {\bf (H1)}, {\bf (HF)}(i), {\bf (HL)}(i). For any $(t,x)\in [0,T]\times\R^d$, the following holds.
 \begin{enumerate}[(i)]
  \item $v$ satisfies the {\em super-optimality}, i.e.  there exists $(\bar y,\bar \alpha)$ satisfying \eqref{DSy} such that
  \[
  v(t,x)\geq v(t+h,\bar y(t+h)) +\int^{t+h}_t \ell(\bar y(s),\bar \alpha(s))ds,\ \text{for}\ h\in[0,T-t].
  \]
  \item $v$ satisfies the {\em sub-optimality}, i.e. for any $(y,\alpha)$ satisfying \eqref{DSy}
  \[
  v(t,x)\leq v(t+h,y(t+h)) +\int^{t+h}_t \ell( y(s),\alpha(s))ds,\ \text{for}\ h\in[0,T-t].
  \]
 \end{enumerate}
\end{prpstn}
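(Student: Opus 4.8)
The plan is to establish the two inequalities separately, both by direct manipulation of the definition \eqref{oc} together with the concatenation property of admissible trajectory–control pairs. First I would record the elementary fact that if $(y_1,\alpha_1)$ solves \eqref{DSy} on $[t,t+h]$ and $(y_2,\alpha_2)$ solves \eqref{DSy} on $[t+h,T]$ with $y_2(t+h)=y_1(t+h)$, then their concatenation solves \eqref{DSy} on $[t,T]$; conversely any solution on $[t,T]$ restricts to solutions on the two subintervals. Under {\bf (HF)}(i) and {\bf (HL)}(i) the set $\mathcal S_t^T(x)$ is nonempty and, via the reformulation \eqref{OCG} with the convex compact-valued $G$, the infimum in \eqref{oc} is attained (as already noted after \eqref{OCG}); this attainment is what allows the \emph{super}-optimality to be stated with an existing pair $(\bar y,\bar\alpha)$ rather than an $\e$-optimal one.

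For the super-optimality (i), I would take $(\bar y,\bar\alpha)$ to be an optimal pair for the problem starting at $(t,x)$ (existence from the $G$-formulation). Fix $h\in[0,T-t]$ and split the cost along $\bar y$ at time $t+h$:
\[
v(t,x)=\vp(\bar y(T))+\int_t^{t+h}\ell(\bar y,\bar\alpha)\,ds+\int_{t+h}^T\ell(\bar y,\bar\alpha)\,ds.
\]
Since the restriction of $(\bar y,\bar\alpha)$ to $[t+h,T]$ is an admissible competitor for the problem started at $(t+h,\bar y(t+h))$, the last two terms together are $\geq v(t+h,\bar y(t+h))$, giving the claimed inequality. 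For the sub-optimality (ii), I would fix an arbitrary admissible $(y,\alpha)$ on $[t,T]$ and an arbitrary $\e>0$, choose an $\e$-optimal pair $(z,\beta)$ for the problem started at $(t+h,y(t+h))$, concatenate $y|_{[t,t+h]}$ with $(z,\beta)$ to obtain an admissible competitor for $(t,x)$, and deduce
\[
v(t,x)\leq \int_t^{t+h}\ell(y,\alpha)\,ds+v(t+h,y(t+h))+\e;
\]
letting $\e\to0$ yields (ii).

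The main obstacle, as usual with DPP proofs, is the measurable-selection / concatenation bookkeeping: one must check that gluing two $L^\infty$ controls produces an element of $\A=L^\infty(0,T;\As)$ and that the glued trajectory is absolutely continuous and solves \eqref{DSy} a.e.\ (the only possible issue being at the junction time $t+h$, which has measure zero), and one must confirm that the finiteness of the value function — guaranteed by Proposition \ref{lambdagr} and Remark \ref{grownrem} (the Gronwall bounds make $\vp(y(T))+\int\ell$ finite for every admissible pair) — makes the $\e$-optimal selection in (ii) legitimate. None of this requires the essential/tangential dynamics or the controllability hypotheses {\bf (H2)}–{\bf (H3)}; it uses only {\bf (H1)}, {\bf (HF)}(i), {\bf (HL)}(i), exactly as stated.
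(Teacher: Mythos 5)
Your argument is correct and is exactly the standard splitting/concatenation proof of the DPP; the paper itself gives no proof of Proposition \ref{PROdpp}, presenting it as a well-known result, and your write-up supplies precisely the standard argument it implicitly relies on, with the attainment of the infimum for part (i) coming from the compact convex augmented dynamics $G$ as noted after \eqref{OCG}. The bookkeeping you flag --- measurable selection to recover a control $\bar\alpha$ from the $G$-formulation, gluing of $L^\infty$ controls across the junction time, and finiteness of $v$ via the Gronwall bounds --- is indeed all that is required, and you are right that none of the controllability hypotheses \textbf{(H2)}--\textbf{(H3)} enter.
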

 
In the following proposition, we state a backward sub-optimality for the value function. Note that the proof follows  by standard arguments as a consequence of Proposition \ref{PROdpp}. \\
In this case we look at the following system. 
\begin{prpstn}\label{DPPback}
Assume {\bf(H1)}, {\bf (HF)}(i), {\bf (HL)}(i). For any $(t,x)\in[0,T]\times\R^d$, $y(\cdot),\alpha$ satisfying  
\begin{equation}\label{DSyback}
 \left\{
 \begin{array}{ll}
  \dot y(s)= f(y(s),\alpha(s)) & \text{a.e.}\ s\in(0,t),\\
  y(t)=x,
 \end{array}
 \right.
\end{equation}
it holds
 \[
 v(t,x)\geq v(t-h,y(t-h))-\int^t_{t-h}\ell(y(s),\alpha(s))ds,\ \forall\,h\in[0,t].
\]
\end{prpstn}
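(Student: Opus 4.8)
The plan is to derive this backward inequality directly from the forward sub-optimality principle of Proposition \ref{PROdpp}(ii), by gluing the given backward arc with a suitably extended forward trajectory. Fix $(t,x)\in[0,T]\times\R^d$, a pair $(y,\alpha)$ satisfying \eqref{DSyback}, and $h\in[0,t]$; the case $h=0$ is trivial, so assume $h>0$.

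First I would upgrade the backward arc $y$, which a priori lives only on $[0,t]$, to a full-horizon admissible trajectory on $[t-h,T]$ issued from the point $\hat x:=y(t-h)$ at the time $\hat t:=t-h$. To do this, keep $(\hat y,\hat\alpha):=(y,\alpha)$ on $[t-h,t]$ and, on $[t,T]$, let $\hat\alpha$ be any fixed constant control in $\As$ and $\hat y$ the corresponding solution of $\dot{\hat y}=f(\hat y,\hat\alpha)$ with $\hat y(t)=x$; such a solution exists on all of $[t,T]$ thanks to the linear growth bound {\bf (HF)}(iii) together with {\bf (HF)}(i). The glued control $\hat\alpha$ is measurable and bounded, hence $\hat\alpha\in\A$, and the glued arc $\hat y$ is absolutely continuous on $[t-h,T]$ with $\dot{\hat y}(s)=f(\hat y(s),\hat\alpha(s))$ a.e.; thus $\hat y\in\mathcal{S}_{t-h}^{T}(\hat x)$ in the sense of \eqref{DSy}.

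Next I would apply Proposition \ref{PROdpp}(ii) at the point $(\hat t,\hat x)=(t-h,y(t-h))$, with increment $h\in[0,T-\hat t]$ (note $h\le T-t+h$ since $t\le T$) and the trajectory $(\hat y,\hat\alpha)$ just constructed. This yields
\[
v(t-h,y(t-h))\;\le\; v\bigl(t,\hat y(t)\bigr)+\int_{t-h}^{t}\ell\bigl(\hat y(s),\hat\alpha(s)\bigr)\,ds .
\]
Since $\hat y\equiv y$ and $\hat\alpha\equiv\alpha$ on $[t-h,t]$ and $y(t)=x$, the right-hand side equals $v(t,x)+\int_{t-h}^{t}\ell(y(s),\alpha(s))\,ds$, and rearranging gives exactly the claimed inequality. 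As $h\in[0,t]$ was arbitrary, this proves the proposition.

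There is no serious obstacle here: as noted in the excerpt, the statement is a standard consequence of the forward dynamic programming principle. The only point requiring a line of care is the extension/concatenation step — checking that splicing the backward arc with a forward continuation produces a genuine admissible pair in the sense of \eqref{DSy} (measurability of the control, absolute continuity of the trajectory, validity of the ODE a.e., and global-in-time existence on $[t,T]$), which is immediate from the growth and regularity assumptions {\bf (HF)}.
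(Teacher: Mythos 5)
Your overall strategy is exactly the ``standard argument'' the paper alludes to (it gives no proof, only the remark that the statement is a consequence of Proposition \ref{PROdpp}): restrict the backward arc to $[t-h,t]$, extend it to an admissible pair on $[t-h,T]$, and apply the forward sub-optimality of Proposition \ref{PROdpp}(ii) at $(t-h,y(t-h))$ with increment $h$. The algebra and the reduction are correct.

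One detail in your extension step does not quite work as written: you continue the trajectory past time $t$ with a \emph{constant} control $a\in\As$ and invoke {\bf (HF)}(i) and {\bf (HF)}(iii) for existence of the solution of $\dot{\hat y}=f(\hat y,a)$. But in this paper $x\mapsto f(x,a)$ is in general discontinuous across the interfaces $\Gamma$ (only the restrictions $F_i=F|_{\overline\Omega_i}$ are Lipschitz, while $F$ itself is merely upper semicontinuous), so a Carath\'eodory solution for a fixed control need not exist; linear growth prevents blow-up but does not give existence. The repair is immediate and does not change your argument: instead of a constant control, concatenate with \emph{any} admissible pair $(\tilde y,\tilde\alpha)$ satisfying \eqref{DSy} from $(t,x)$ --- the existence of at least one such pair on $[t,T]$ is already guaranteed in the paper (it underlies the definition of $v$ and the super-optimality in Proposition \ref{PROdpp}(i), via the upper semicontinuity, convexity and compactness of the augmented dynamics $G$). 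With that substitution the glued pair is admissible on $[t-h,T]$ and the rest of your proof goes through verbatim.
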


Now we recall the properties satisfied by the value function in the two cases studied, precisely the continuity under the assumption {\bf (H$\vp$1)} and the lower semi-continuity under the assumption {\bf (H$\vp$2)}.

\subsection{Lower semicontinuity under (H$\vp$2)}

Under {\bf(H$\vp$2)}  $v$ is lower semi-continuous. In this case we characterize the value function $v$ through the backward sub-optimality, as showed in the following proposition.
 
\begin{prpstn}
 Assume {\bf(H$\vp$2)}, {\bf(H1)}, {\bf (HF)}(i), {\bf (HL)}(i), {\bf(HG)}. Then $v$ is lower semi-continuous. Moreover, for any $(t,x)\in[0,T]\times\R^d$ and $y(\cdot)$ satisfying \eqref{DSyback},
  \begin{equation}\label{propv}
  v(t,x)=\lim_{h\ra 0^+}v(t-h,y(t-h)).
  \end{equation}

\end{prpstn}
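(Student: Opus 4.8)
The plan is to prove the two claims separately: first the lower semicontinuity of $v$ under {\bf (H$\vp$2)}, and then the continuity-along-trajectories formula \eqref{propv}.

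\textbf{Lower semicontinuity of $v$.} I would argue by a standard compactness-of-trajectories argument exploiting the augmented formulation \eqref{OCG}. Fix $(t,x)$ and a sequence $(t_n,x_n)\to(t,x)$; I must show $\liminf_n v(t_n,x_n)\geq v(t,x)$. Passing to a subsequence realizing the liminf and discarding the trivial case where it is $+\infty$, pick for each $n$ a near-optimal pair $(y_n(\cdot),\eta_n(\cdot))$ solving \eqref{DIyeta} on $(t_n,T)$ with $(y_n(t_n),\eta_n(t_n))=(x_n,0)$ and $\vp(y_n(T))-\eta_n(T)\le v(t_n,x_n)+1/n$. By Remark \ref{grownrem} the $y_n$ are uniformly bounded and equi-Lipschitz on $[t,T]$ (after extending each $y_n$ constantly on $[t,t_n]$ if $t_n>t$, or noting $t_n\to t$), and $\eta_n$ likewise has uniformly bounded derivative by the growth estimate on $|\dot\eta_n|$; hence by Arzelà--Ascoli a subsequence converges uniformly to some $(y,\eta)$. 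Since $G$ is upper semicontinuous with compact convex images and satisfies the linear growth {\bf (HF)}(iii), the convergence theorem for differential inclusions (Filippov--Ważewski type) gives that $(y,\eta)$ solves \eqref{DIyeta} on $(t,T)$ with $(y(t),\eta(t))=(x,0)$. Using the lower semicontinuity of $\vp$ in {\bf (H$\vp$2)} and the uniform convergence $\eta_n(T)\to\eta(T)$,
\[
\liminf_n v(t_n,x_n)\ \geq\ \liminf_n\big(\vp(y_n(T))-\eta_n(T)\big)\ \geq\ \vp(y(T))-\eta(T)\ \geq\ v(t,x),
\]
which is the claim.

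\textbf{The formula \eqref{propv}.} Fix $(t,x)$, an admissible backward pair $(y(\cdot),\alpha(\cdot))$ satisfying \eqref{DSyback}, and write $\psi(h):=v(t-h,y(t-h))$ for $h\in[0,t]$. One inequality is immediate from lower semicontinuity: since $(t-h,y(t-h))\to(t,x)$ as $h\to 0^+$, we get $\liminf_{h\to0^+}\psi(h)\ge v(t,x)=\psi(0)$. For the reverse, I would use the super-optimality principle of Proposition \ref{PROdpp}(i) started at the point $(t-h,y(t-h))$: there exists a forward near-optimal trajectory from $(t-h,y(t-h))$, but more directly, the backward sub-optimality of Proposition \ref{DPPback} applied at $(t,x)$ along the given $(y,\alpha)$ gives, for every $h\in[0,t]$,
\[
v(t,x)\ \geq\ v(t-h,y(t-h))-\int_{t-h}^{t}\ell(y(s),\alpha(s))\,ds\ =\ \psi(h)-\int_{t-h}^{t}\ell(y(s),\alpha(s))\,ds.
\]
Because $\ell(y(\cdot),\alpha(\cdot))$ is integrable on $[0,t]$ (it is bounded by the locally bounded expression in Remark \ref{grownrem}), the integral tends to $0$ as $h\to0^+$, so $\limsup_{h\to0^+}\psi(h)\le v(t,x)=\psi(0)$. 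Combining the two bounds yields $\lim_{h\to0^+}\psi(h)=v(t,x)$, which is \eqref{propv}.

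\textbf{Main obstacle.} The delicate point is the passage to the limit in the differential inclusion for the lower-semicontinuity part: one must handle the fact that the initial times $t_n$ vary (requiring a careful extension or reparametrization so that all trajectories live on a common interval), and one must invoke the closure theorem for \eqref{DIyeta} with the unbounded-in-$x$ but locally bounded dynamics $G$ — here the uniform a priori bounds from Remark \ref{grownrem} are exactly what make the compactness and the upper semicontinuity of $G$ usable. Everything else (the two one-sided inequalities for \eqref{propv}) is a routine consequence of the dynamic programming inequalities already recorded in Propositions \ref{PROdpp} and \ref{DPPback} together with the growth bounds.
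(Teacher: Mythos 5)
Your proof is correct and follows essentially the same route as the paper: the lower bound in \eqref{propv} comes from lower semicontinuity and the upper bound from the backward sub-optimality of Proposition \ref{DPPback} (noting that the integral of the nonnegative, locally bounded running cost vanishes as $h\to 0^+$). The only difference is that the paper omits the details of the lower semicontinuity of $v$ as standard, whereas you spell out the usual compactness/closure argument for the augmented inclusion \eqref{DIyeta}, which is exactly the argument the paper is alluding to.
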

\begin{proof}
The lower semi-continuity essentially follows from the upper semi-continuity, convexity and compactness of the dynamics $G$ and from the lower semi-continuity of $\vp$. Since it is a standard result we omit the details  of the proof. We prove  \eqref{propv}.
 By the lower semi-continuity of $v$ we have
 \begin{equation}\label{ast}
 v(t,x)\leq \mathop{\lim\,\inf}\limits_{h\ra 0^+} v(t-h,y(t-h)).
 \end{equation}
By Proposition \eqref{DPPback}, we get
 \[
 v(t,x)\geq v(t-h,y(t-h))+\int^t_{t-h}\ell(y(s),\alpha(s))ds,\ \forall\,h\in[0,t],
 \]
 and then we have
 \begin{equation}\label{pal}
 v(t,x)\geq \mathop{\lim\,\sup}\limits_{h\ra 0^+} v(t-h,y(t-h)).
 \end{equation}
 By \eqref{ast} and \eqref{pal} we conclude that $v(t,x)=\lim_{h\ra 0^+}v(t-h,y(t-h))$.

\end{proof}
\subsection{Continuity under (H$\vp$1)}

Under {\bf(H$\vp$1)} and the controllability assumption {\bf (H2)}, we have the continuity of the value function.

\begin{prpstn}\label{contv}
Assume {\bf(H$\vp$1)}, {\bf(H1)}, {\bf (HF)}, {\bf (HL)}, {\bf (HG)}, {\bf(H2)}. Then $v$ is continuous on $[0,T] \times \R^d$. Moreover, $v|_{[0,T]\times\Gamma}$ is locally Lipschitz continuous on $[0,T]\times\Gamma$.
\end{prpstn}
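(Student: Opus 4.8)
The plan is to derive both assertions from the dynamic programming principle (Proposition~\ref{PROdpp}), from Filippov-type comparison estimates for the augmented inclusion \eqref{DIyeta} inside each cell $\Om_i$, and from the full controllability \textbf{(H2)} along the interface $\Gamma$. By Remark~\ref{grownrem} and Proposition~\ref{lambdagr}, every trajectory issued from a bounded set remains, on $[0,T]$, in a bounded set on which $f$ and $\ell$ are bounded, each $F_i$ and $L_i$ is Lipschitz with a uniform constant, and $v$ is bounded; hence all estimates below are carried out on a fixed compact neighbourhood, with constants depending only on it. Concretely, the target is the \emph{joint} local bound
\[
|v(t,x)-v(t',x')|\le L\big(|t-t'|+\|x-x'\|\big),\qquad (t,x),(t',x')\in K,
\]
for every compact $K\subset[0,T]\times\Gamma$ (which yields the second assertion), together with the continuity of $v$ across $\Gamma$ (which yields the first). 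I stress that time- and space-regularity cannot be separated here — forward-in-time upper estimates at a fixed point reduce, via the DPP, to spatial estimates at a later time — so the plan is to prove the joint bound in one stroke.

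\emph{Lower semicontinuity on $[0,T]\times\R^d$.} This step uses neither \textbf{(H2)} nor the Lipschitz structure. Given $(t_n,x_n)\to(t,x)$, pick near-optimal pairs $(y_n,\eta_n)$ for the augmented problem \eqref{OCG} issued from $(x_n,0)$ at time $t_n$. Since $G$ is upper semicontinuous with nonempty convex compact images (using \textbf{(HG)}) and has linear growth, the associated solution set is compact for uniform convergence; extracting a subsequence and matching the intervals $[t_n,T]$ in the standard way, the limit $(y,\eta)$ is admissible for \eqref{OCG} from $(x,0)$ at time $t$, whence, by lower semicontinuity of $\vp$ (which holds a fortiori under \textbf{(H$\vp$1)}),
\[
v(t,x)\le\vp(y(T))-\eta(T)\le\liminf_n\big(\vp(y_n(T))-\eta_n(T)\big)=\liminf_n v(t_n,x_n).
\]

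\emph{Local Lipschitz continuity on $[0,T]\times\Gamma$.} Fix $x_0,x_1\in\Gamma$ close to one another. Since the hyperplanes $\Hy_j$ are pairwise parallel or orthogonal, successive orthogonal projections produce a polygonal path $\gamma\subset\Gamma$ joining $x_0$ to $x_1$ of length $\le C\|x_0-x_1\|$. By \textbf{(H2)}, $B(0,r_1)\subset F(z)$ for every $z\in\gamma$, so a measurable selection gives an admissible pair $(\bar y,\bar\alpha)$ covering $\gamma$ in time $\tau\le C\|x_0-x_1\|/r_1$ with $\bar y(\cdot)\in\Gamma$ and running cost $\le C\tau$ (recall $0\le\ell\le C$ on $K$ by \textbf{(HL)}(iii)); similarly, for any prescribed duration $h$ there is an $h$-loop at $x_0$ staying in $\Gamma$ of cost $\le Ch$. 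Concatenating such a loop, then $\gamma$, then a near-optimal pair issued from $x_1$ at the appropriate later time, and using the sub- and super-optimality of Proposition~\ref{PROdpp} to patch the constructed finite-time piece with the remaining optimal behaviour, one obtains both inequalities in the displayed bound; when the optimal trajectory invoked for the lower bound rides a stratum $\Gamma_k$, the shadowing along $\Gamma_k$ is provided by the Lipschitz continuity of $G_{\Gamma_k}$ (Proposition~\ref{PROPGM}) through Filippov's theorem applied to the inclusion $(\dot y,\dot\eta)\in G_{\Gamma_k}(y)$.

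\emph{Continuity on $[0,T]\times\R^d$ and the main difficulty.} For $x$ in an open cell $\Om_i$, continuity of $v$ at $(t,x)$ is obtained by shadowing: for an $\varepsilon$-optimal trajectory $y_0$ from $(t,x)$ and $x'$ close to $x$, Filippov's theorem for the restriction of the augmented inclusion to $\overline{\Om_i}$ (Lipschitz by \textbf{(HF)},\textbf{(HL)}) yields a trajectory $y'$ from $(t,x')$ staying $O(\|x-x'\|)$-close to $y_0$ as long as $y_0$ remains in $\overline{\Om_i}$; as soon as $y_0$ reaches $\Gamma$ one hands the construction over to the interface estimate of the previous step, and the same argument shows that the value from a point just inside $\Om_i$ matches the value on $\Gamma$, giving continuity across the interface. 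Matching the two estimates at $\Gamma$ is exactly the delicate point and, I expect, the main obstacle: one must steer $y'$ onto $\Gamma$, close to $y_0$'s entry point and at a nearby time, with $O(\|x-x'\|)$ extra cost, and this is subtle precisely when $y_0$ reaches $\Gamma$ \emph{tangentially} and then rides it, since \textbf{(H2)} supplies controllability only \emph{on} $\Gamma$ and not in a neighbourhood of it. This synchronisation will be carried out by combining the near-boundary Lipschitz continuity of $F_i$, the controllability on $\Gamma$, and the convexity \textbf{(HG)}; once it is in place, the joint local Lipschitz bound on $[0,T]\times\Gamma$ and the continuity of $v$ on $[0,T]\times\R^d$ follow, with Proposition~\ref{PROdpp} upgrading spatial continuity to continuity in $(t,x)$.
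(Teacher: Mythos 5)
Your Step on the local Lipschitz continuity of $v|_{[0,T]\times\Gamma}$ is essentially the paper's argument (segment/polyline trajectories inside $\Gamma$ built from \textbf{(H2)}, orthogonality of the hyperplanes to control the polyline length, a ``waiting'' control with $0\in F(x)$ for the time variable, concatenation with near-optimal trajectories via Proposition~\ref{PROdpp}); the Filippov shadowing along $\Gamma_k$ you invoke there is not needed, since the reverse inequality follows by simply exchanging the roles of $x$ and $z$. The genuine gap is exactly where you place it yourself: the continuity of $v$ at a point of $\Gamma$ approached from inside a cell $\Om_i$. You propose to ``steer $y'$ onto $\Gamma$ close to $y_0$'s entry point with $O(\|x-x'\|)$ extra cost'' and defer this to an unspecified combination of the Lipschitz continuity of $F_i$, controllability on $\Gamma$ and \textbf{(HG)}. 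As stated this does not work: \textbf{(H2)} gives $B(0,r_1)\subset F(z)$ only for $z\in\Gamma$, and for $z\in\Om_i$ near $\Gamma$ the available velocities are those of $F_i(z)$, which by upper semi-continuity need not contain any ball; so there is no direct way to force the shadow trajectory onto the interface, and the tangential-entry case you flag is precisely the one your sketch cannot handle. A proof attempt that stops at ``once this synchronisation is in place, everything follows'' has not proved the proposition.

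The paper circumvents this difficulty rather than solving it head-on. It first proves (Step 2, via Lemma~\ref{LEMcont}, which rests on \cite[Lemma 4.3]{RSZ}) a two-sided reachability statement: for $x\in\Gamma$ and $x_n\in\Om_i$ with $x_n\to x$, there are trajectories of $F$ going from $x_n$ to $\Gamma$ and from $\Gamma$ to $x_n$ in times $\overline h_n,\underline h_n\to 0$ comparable to $d(x_n,\Gamma)$. Combined with the sub-optimality of Proposition~\ref{PROdpp} and the already established continuity of $v|_{[0,T]\times\Gamma}$, this sandwiches $v(t_n,x_n)$ between quantities converging to $v(t,x)$, giving continuity of $v$ at every point of $[0,T]\times\Gamma$ \emph{as a function on $[0,T]\times\R^d$}. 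With that in hand, the interior step becomes easy: run the same control from $x$ and from $z$, apply Gronwall inside $\Om_i$ up to the first exit time $\tilde T$, and if $\tilde T<T$ invoke the continuity of $v(\tilde T,\cdot)$ at the interface point $\bar y_x(\tilde T)\in\Gamma$ --- only closeness of $\bar y_z(\tilde T)$ to that point is needed, never membership in $\Gamma$. To repair your proposal you would need to supply the analogue of Lemma~\ref{LEMcont} (or an equivalent normal-reachability estimate near $\Gamma$) and reorganize the last step so that the shadow trajectory is compared to $v$ at a nearby point rather than transported onto the interface.
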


The proof  is divided in three steps. First we prove the local Lipschitz continuity of the space restriction of $v$ on $\Gamma$, then the continuity of $v$ on $\Gamma$ is obtained and finally the continuity of $v$ in $\R^d$ is concluded. The proof is inspired by the arguments used in \cite{RSZ, RZ} and is given  in  Appendix A. However, we remark that in \cite{RZ}  a total controllability is assumed in each subdomains (and not only on the interfaces as in \textbf{H2}), which leads to the Lipschitz continuity of the value function in all the space.

\begin{rmrk}\rm{
 We remark that our results can be proved under the  following weaker controllability assumption, which divides \textbf{(H2)} into the tangential controllability assumption \textbf{(P1)} and the normal one \textbf{(P2)}:
\begin{description}
 \item[(P1)]
  There exists $r_1>0$ such that for any $x\in\Gamma_j$, 
 \[
 B(0,r_1)\cap \T_{\Gamma_j}(x)\subset F(x).
 \]
 \item[(P2)]
  There exists $r_2>0$ such that for any $x\in\Gamma_j$
 \[
 B(0,r_2)\cap \T_{\Gamma_j}(x)^{\bot}\subset F(x).
 \]
\end{description}
The normal controllability of \textbf{(P2)} is needed to have the local Lipschitz regularity of the augmented dynamics $G_{\Gamma_j}$ (see Proposition \ref{PROPGM}). The tangential controllability stated in \textbf{(P1)} is used to prove the local Lipschitz regularity of the restriction of the value function  on $[0,T] \times \Gamma$ (see Proposition \ref{contv}).   However, we mention that \textbf{(P1)} is not necessary in order to have the Lipschitz regularity. Indeed, 
consider the case of two-domains in $\R^2$ with
\[
\Om_1=\{(x_1,x_2)\,:\, x_1<0,\ x_2\in\R\},\ \Om_2=\{(x_1,x_2)\,:\, x_1>0,\ x_2\in\R\}.
\]
and the interface
\[
\Gamma=\{(0,x_2)\,:\,x_2\in\R\}.
\]
Suppose that the dynamics is defined as follows:
\[
F(x)=\left\{
\begin{array}{lll}
\{(-1,0)\} & \text{for}\ x\in\Om_1,\\
\{(1,0)\} & \text{for}\ x\in\Om_2,\\
\{(p,0)\,:\,p\in[-1,1]\} & \text{for}\ x\in\Gamma.
\end{array}
\right.
\]
The cost functions are the following: for $x=(x_1,x_2)\in\R^2$

\[
\vp(x)=|x_1|,\ \ell\equiv 0.
\]
Note that for $x\in\Gamma$ and any $r_1>0$
\[
B(0,r_1)\cap\mathcal T_{\Gamma}(x)=\{(0,p)\,:\,p\in [-r_1,r_1]\}
\]
which is not included in $F(x)$. Therefore, {\bf (P1)} is not satisfied in this case.

Now we compute the value function, we refer to \eqref{oc} for the definition. We have for $(t,x)\in (0,T)\times\R^2$
\[
y_{t,x}(T)\left\{
\begin{array}{lll}
=(x_1-T+t,x_2) & \text{for}\ x\in\Om_1,\\
=(x_1+T-t,x_2) & \text{for}\ x\in\Om_2,\\
\in\{(t-T,x_2),(T-t,x_2)\} & \text{for}\ x\in\Gamma.
\end{array}
\right.
\]
It is then deduced that
\[
v(t,x)=T-t+|x_1|,\ \forall\,x=(x_1,x_2)\in\R^2,
\]
which is globally Lipschitz continuous. Therefore, the tangential controllability condition {\bf (P1)} is not a necessary condition for the local Lipschitz continuity of the restriction of the value function on $[0,T]\times\Gamma$.

}\end{rmrk}

\section{Supersolutions and super-optimality}\label{supsup}
This section is devoted to the characterization of the super-optimality via HJB inequalities.
The characterization through the tangential dynamic is a classical result since $F$ is upper semi-continuous and $G$ is convex. We give also a  more precise characterization through the essential dynamics, which is not standard since in general $F_E$ is not usc. The proof is mainly based on the fact that the set of trajectories driven by $F$ and $F_E$ are the same. We refer to \cite[Proposition 3.4]{RZ} for a proof of this result. Finally, we remark that in the following theorem no controllability assumption is needed.

\smallskip

The characterization of the super-optimality is the following.
\begin{thrm}\label{CHARAsuper}
 Assume {\bf(H1)}, {\bf (HF)}, {\bf (HL)}, {\bf(HG)}. Let $u:[0,T]\times\R^d\ra\R$ be a  lsc function. The following are equivalent.
 \begin{enumerate}[(i)]
  \item[(i)]
  $u$ satisfies the super-optimality;
  \item[(ii)]
  $u$ is a supersolution to \eqref{HJBOmi}-\eqref{HJBHE};
  \item[(iii)]
  $u$ is a supersolution to \eqref{HJBOmi}-\eqref{HJBHGamma}.
 \end{enumerate}
\end{thrm}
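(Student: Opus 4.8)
The plan is to close the cycle (i)$\Rightarrow$(ii)$\Rightarrow$(iii)$\Rightarrow$(i); since lower semicontinuity is part of all three notions this yields the stated equivalences. The implication (ii)$\Rightarrow$(iii) is immediate: $A^E(x)\subseteq\As$ for every $x$ gives $H^E(x,p)\le H_F(x,p)$ pointwise, so any test function satisfying $-\phi_t+H^E\ge0$ at a local minimum of $u-\phi$ a fortiori satisfies $-\phi_t+H_F\ge0$ there. For (iii)$\Rightarrow$(i) I would invoke the classical correspondence between lsc supersolutions of a convex Hamilton--Jacobi equation and the super-optimality (weak invariance) principle: being a supersolution of \eqref{HJBOmi}-\eqref{HJBHGamma} simply says that $u$ is an lsc supersolution of $-\partial_t u+H_F(x,Du)\ge0$ on all of $(0,T)\times\R^d$, and since the augmented multifunction $G$ is upper semicontinuous with nonempty compact images and, by {\bf(HG)}, convex images, the epigraph of $u$ is weakly invariant for the augmented differential inclusion \eqref{DIyeta}; a Filippov-type measurable selection then yields, for each $(t,x)$, a pair $(\bar y,\bar\alpha)$ solving \eqref{DSy} with $u(t,x)\ge u(t+h,\bar y(t+h))+\int_t^{t+h}\ell(\bar y(s),\bar\alpha(s))\,ds$ for all $h\in[0,T-t]$, i.e. the super-optimality inequality. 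No controllability assumption enters here.

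The substantive implication is (i)$\Rightarrow$(ii), the difficulty being that $F^E$, and hence $H^E$, is not upper semicontinuous across the interfaces, so the usual ``divide by $h$ and let $h\to0^+$'' argument used for (iii)$\Rightarrow$(i) cannot be run verbatim with $H^E$. The key input is \cite[Proposition 3.4]{RZ}: the solution set of $\dot y\in F(y)$ coincides with that of $\dot y\in F^E(y)$, with the corresponding controls selectable in $A^E$, so (i) rewrites as a super-optimality principle relative to $F^E$. Fix $\phi\in C^1$ with $u-\phi$ attaining a local minimum at $(t_0,x_0)\in(0,T)\times\R^d$, normalised so that $u(t_0,x_0)=\phi(t_0,x_0)$ and $u\ge\phi$ nearby, and let $(\bar y,\bar\alpha)$ be a super-optimality trajectory from $(t_0,x_0)$ with $\bar\alpha(s)\in A^E(\bar y(s))$ for a.e. $s$. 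Using $u\ge\phi$ along $\bar y$ and the chain rule for $\phi$, one obtains for small $h>0$
\[
0\ \ge\ \frac1h\int_{t_0}^{t_0+h}\Big[\phi_t(s,\bar y(s))+D\phi(s,\bar y(s))\cdot f(\bar y(s),\bar\alpha(s))+\ell(\bar y(s),\bar\alpha(s))\Big]\,ds .
\]
The plan is then to split $[t_0,t_0+h]$ according to the stratum $\M_k$ containing $\bar y(s)$: on $\{\,s:\bar y(s)\in\M_k\,\}$ one has $\bar\alpha(s)\in A^E_{\M_k}(\bar y(s))$, hence the bracket is $\ge\phi_t(s,\bar y(s))-H^E_{\M_k}(\bar y(s),D\phi(s,\bar y(s)))$; since in the piecewise-linear partition the tangent cones are locally constant along each stratum and $F^{\mathrm{ext}}_k$ is continuous on $\overline{\M_k}$, each $H^E_{\M_k}$ is continuous there, and combining this with the convexity {\bf(HG)} of $G$ one bounds $\liminf_{h\to0^+}$ of the average from below by $\phi_t(t_0,x_0)-H^E(x_0,D\phi(t_0,x_0))$. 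This gives $-\phi_t(t_0,x_0)+H^E(x_0,D\phi(t_0,x_0))\ge0$, i.e. $u$ is a supersolution of \eqref{HJBOmi}-\eqref{HJBHE}.

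The main obstacle is precisely this last limit passage: because $F^E$ is globally only a union of stratum-wise continuous branches, rather than upper semicontinuous, the averaging argument must be localised to each stratum and reconciled with the convexification coming from {\bf(HG)}, and this is where the essential-dynamics construction of \cite{BW} together with the trajectory-identification of \cite{RZ} carry the weight; with those in hand the remaining steps are routine.
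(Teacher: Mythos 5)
Your overall architecture matches the paper's: the same three implications are proved in a cycle, with (ii)$\Rightarrow$(iii) handled exactly as in the paper via $A^E(x)\subseteq\As$ (so the supremum only grows), and (iii)$\Rightarrow$(i) delegated to the classical weak-invariance correspondence for the usc, convex-valued augmented inclusion \eqref{DIyeta}; the paper cites \cite{HZ,F,FP,CLSW,WZ} for precisely this and adds nothing more. The divergence is in (i)$\Rightarrow$(ii), and there your argument has a genuine gap.

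The paper never integrates along the trajectory: it forms the single difference quotient $\bigl(\tfrac{\bar y(t_0+h_n)-x_0}{h_n},\tfrac{\bar\eta(h_n)-\bar\eta(0)}{h_n}\bigr)$, extracts a limit $(\bar p,\bar q)$ lying in $G(x_0)$ by upper semicontinuity and convexity of $G$, invokes \cite[Lemma 3.6]{RZ} to get $\bar p\in\co F^E(x_0)$, hence $(\bar p,\bar q)\in\co G^E(x_0)$, and removes the convex hull by the separation theorem. Your stratum-wise averaging replaces all of this by the claim that, for $s$ near $t_0$ with $\bar y(s)\in\M_k$, one has $H^E_{\M_k}(\bar y(s),D\phi(s,\bar y(s)))\le H^E(x_0,D\phi(t_0,x_0))+o(1)$, and this is exactly the step your justification does not cover. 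Continuity of $F^{\mathrm{ext}}_k$ on $\overline{\M_k}$ does not yield it: (a) $H^E(x_0,\cdot)$ is defined through the actual values $f(x_0,a),\ell(x_0,a)$ for $a\in A^E(x_0)$, not through the extensions, so dominating the limit of near-optimal pairs $(f(y_n,a_n),\ell(y_n,a_n))$ with $y_n\in\M_k$, $y_n\to x_0$, by $H^E(x_0,\cdot)$ requires re-identifying that limit as an element of $G(x_0)$ with essential velocity --- which is the content of \cite[Lemma 3.6]{RZ}, not of the trajectory-equivalence result \cite[Proposition 3.4]{RZ} you cite; (b) at a point $x_0\in\overline{\M_k}\setminus\M_k$ the cone $\T_{\overline{\M_k}}(x_0)$ is in general a proper cone inside the tangent space of $\M_k$, so the limiting velocity need not belong to $F^E_{\M_k}(x_0)$, and whether it is essential at $x_0$ at all depends on the neighbouring strata; (c) the convexity {\bf(HG)} you invoke at this point does nothing for a pointwise lower bound on the integrand --- in the paper it is what places the difference-quotient limit inside $G(x_0)$. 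As written, the crucial limit passage is asserted rather than proved, and the result you cite does not supply it.
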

\begin{proof}
The implication (iii) $\Rightarrow$ (i)  is customary and well known, in particular see \cite{HZ}, Proposition 5.1 in the constrained framework and \cite{F}, \cite{FP}, \cite{CLSW}, \cite{WZ} for the unconstrained framework.\\
Now we prove that (i)$\Rightarrow$ (ii).
 Given $t\in[0,T]$ and $x\in\R^d$, by the super-optimality of $u$ there exists $\bar y,\ \bar \alpha$ such that
 \[
 u(t,x)\geq u(t+h,\bar y(t+h))+\int^{t+h}_t \ell(\bar y(s),\bar \alpha(s))ds,\ \forall\,h\in [0,T-t].
 \]
 We set
 \[
 \bar \eta(h):=u(t,x)-\int^{t+h}_t\ell(\bar y(s),\bar \alpha(s))ds.
 \]
 For any $\phi\in C^1((0,T)\times\R^d)$ such that $u-\phi$ attains a local minimum at $(t,x)$, we have
 \[
 \bar \eta(h)\geq u(t+h,\bar y(t+h)) \geq \phi(t+h,\bar y(t+h))+u(t,x)-\phi(t,x),\ \forall\,h\in [0,T-t],
 \]
 i.e.
 \[
 \phi(t,x)-\phi(t+h,\bar y(t+h))+\bar \eta(h)-\bar \eta(0)\geq 0,\ \forall\,h\in [0,T-t].
 \]
 Up to a subsequence, let $h_n\ra 0^+$ such that there exists $\bar p\in\R^d,\ \bar q\in\R$ satisfying
 \[
 \frac{\bar y(t+h_n)-x}{h_n}\ra \bar p,\ \frac{\bar \eta(h_n)-\bar \eta(0)}{h_n}\ra \bar q.
 \]
 It is clear that $(\bar p,\bar q)\in G(x)$ since $G$ is usc and convex valued. Moreover, by \cite[Lemma 3.6]{RZ}
 \[
 \bar p\in \co F^E(x).
 \]
 Therefore, by the definition of $G^E$
 \[
 (\bar p,\bar q)\in \co G^E(x).
 \]
 We then deduce that
 \[
 -\phi_t(t,x)+\sup_{(p,q)\in \co G^E(x)}\{-p\cdot D\phi(t,x)+q\}\geq 0.
 \]
 The separation theorem implies that
 \[
 -\phi_t(t,x)+\sup_{(p,q)\in G^E(x)}\{-p\cdot D\phi(t,x)+q\}\geq 0.
 \]
 By the definition of $G^E(x)$, for any $(p,q)\in G^E(x)$, there exists $a\in A^E(x)$ such that
 \[
 p= f(x,a),\ q\leq -\ell(x,a).
 \]
 Thus, we conclude that
 \[
 -\phi_t(t,x)+\sup_{a\in A^E(x)}\{-f(x,a)\cdot D\phi(t,x)-\ell(x,a)\}\geq 0,
 \]
which ends the proof.\\
Now we prove that (ii) $\Rightarrow$ (iii).
Let $u$ be a supersolution to \eqref{HJBOmi}-\eqref{HJBHE}, for any $(t,x)\in(0,T)\times\R^d$ and $\phi\in C^1((0,T)\times\R^d)$ such that $u-\phi$ attains a local minimum at $(t,x)$, we have
\[
-\partial_t \phi(t,x)+\sup_{a\in A^E(x)}\{-f(x,a)\cdot D\phi(t,x)-\ell(x,a)\}\geq 0.
\]
Note that $A^E(x)\subset A(x)$, then
\[
-\partial_t \phi(t,x)+\sup_{a\in A(x)}\{-f(x,a)\cdot D\phi(t,x)-\ell(x,a)\}\geq 0,
\]
which is the desired result.
\end{proof}

\section{Subsolutions and sub-optimality}\label{subsub}
This section is devoted to the characterization of the sub-optimality via the HJB inequalities.
In the standard setting where the dynamics are not stratified, the multifunction of dynamics has to be Lipschitz to obtain the characterization of the sub-optimality. This property is not satisfied in our case and no classical arguments can be adapted here. However, we mention that on each subdomain the (augmented) dynamics are locally Lipschitz continuous as indicated in Proposition \ref{PROPGM}. Here is to investigate the desired sub-optimality property in each subdomain, and then the properties are glued together to obtain the complete characterization result. This idea was firstly introduced in \cite{BW}.

\smallskip

The characterization of the sub-optimality is the following. We split it into two theorems depending whether we assume {\bf(H2)} (Theorem \ref{CHARAsub}) or {\bf(H3)} (Theorem \ref{CHARAsubback}). 
\begin{thrm}\label{CHARAsubback}
 Assume {\bf(H1)}, {\bf (HF)}, {\bf (HL)},  {\bf(HG)},  {\bf(H3)}. Let $u:[0,T]\times\R^d\ra\R$ be a lsc function. Then the following are equivalent.
 \begin{enumerate}[(i)]
  \item 
  $u$ satisfies the sub-optimality;
  \item
  $u$ is the bilateral subsolution to \eqref{HJBOmi}-\eqref{HJBHGamma}.
 \end{enumerate}
\end{thrm}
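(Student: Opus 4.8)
\textbf{Proof proposal for Theorem \ref{CHARAsubback}.}

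The plan is to prove the two implications separately, mimicking the structure used for the super-optimality in Theorem \ref{CHARAsuper} but adapting it to the discontinuous, low-regularity setting allowed by \textbf{(H3)}. For the implication (ii) $\Rightarrow$ (i), I would localize the analysis on each stratum $\M_k$. Fix $(t,x)\in(0,T)\times\R^d$ and the unique $k$ with $x\in\M_k$. The bilateral subsolution property on $\M_k$ gives, for every $\phi\in C^1$ with $u|_{\M_k}-\phi$ attaining a local minimum at $(t,x)$, the inequality $-\phi_t+H_{\M_k}(x,D\phi)\le 0$; via the augmented dynamics $G_{\M_k}$ this says that the lsc function $u$, restricted to $\M_k$, is a (bilateral) supersolution of the tangential HJB equation on the manifold $\M_k$. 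By Proposition \ref{PROPH3}(i), $G_{\M_k}$ is either empty-valued or locally Lipschitz continuous on $\M_k$, so on each stratum where $G_{\M_k}$ is non-trivial the classical viability/invariance arguments (as in \cite{HZ, F, FP, CLSW, WZ}) apply and yield a \emph{backward} sub-optimality along trajectories of $F_{\M_k}$: there is a tangential trajectory $y(\cdot)$ staying in $\overline{\M_k}$ with $v(t,x)\le v(t-h,y(t-h))-\int_{t-h}^t\ell$ for small $h$. The delicate point is that a trajectory of the full dynamics $F$ starting at $x$ need not stay on $\M_k$; here Proposition \ref{PROPH3}(ii) is exactly the tool needed --- it shows that any piece of the reachable set of $F$ lying in $\overline{\Gamma_j}$ is, up to a time reparametrization by $\Delta_j$, contained in the tangential reachable set $\mathcal R_j$. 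Gluing the local sub-optimality inequalities across the strata visited by an arbitrary trajectory (a finite or countable concatenation argument, using that the partition is cellular and the trajectory is Lipschitz) then produces the global sub-optimality inequality $v(t,x)\le v(t+h,y(t+h))+\int_t^{t+h}\ell$ for \emph{every} $(y,\alpha)$ satisfying \eqref{DSy}.

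For the converse (i) $\Rightarrow$ (ii), I would argue as in the proof of (i)$\Rightarrow$(ii) in Theorem \ref{CHARAsuper}, but now fixing a stratum $\M_k$ and testing with $\phi\in C^1((0,T)\times\M_k)$ such that $u|_{\M_k}-\phi$ attains a local \emph{minimum} at $(t,x)$ (the bilateral condition). Sub-optimality says that for \emph{every} tangential trajectory $(y,\alpha)$ with values in $\overline{\M_k}$ one has $u(t,x)\le u(t+h,y(t+h))+\int_t^{t+h}\ell(y,\alpha)ds$; subtracting $\phi$, dividing by $h_n\to 0^+$ along a subsequence for which the difference quotients of $y$ and of the cost converge to some $(\bar p,\bar q)$, and using that $(\bar p,\bar q)$ can be taken in $G_{\M_k}(x)$ (by upper semicontinuity and convexity of $G$, together with $\bar p\in\T_{\M_k}(x)$ since $y$ stays on $\overline{\M_k}$), yields $-\phi_t(t,x)+(-\bar p\cdot D\phi(t,x)+\bar q)\le 0$. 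Choosing the tangential control $a\in A_{\M_k}(x)$ achieving the supremum in $H_{\M_k}$ (using local controllability on the strata where $B(0,r_2)\subset F$, and the trivial case $F\cap\T_{\Gamma_j}=\emptyset$ where $H_{\M_k}$ is vacuous) gives $-\phi_t(t,x)+H_{\M_k}(x,D\phi(t,x))\le 0$, which is the bilateral subsolution inequality.

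I expect the main obstacle to be the (ii) $\Rightarrow$ (i) direction, specifically the passage from the purely local, stratum-by-stratum backward sub-optimality to a genuine global sub-optimality valid along arbitrary trajectories of $F$ that repeatedly cross the interface $\Gamma$. This requires careful bookkeeping of the time spent by a trajectory on each stratum, control of the possibly infinitely many crossings, and the time-rescaling furnished by Proposition \ref{PROPH3}(ii) to convert motion of $F$ near $\Gamma_j$ into admissible tangential motion; the chattering phenomena that \textbf{(H3)} is designed to allow are precisely what makes this step technically heavy. The regularity inputs (Propositions \ref{PROPGM} and \ref{PROPH3}) and the standard viability machinery handle the single-stratum estimates, so the real work is in the gluing.
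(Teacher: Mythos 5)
Your proposal has two genuine gaps, one in each direction. For (i)$\Rightarrow$(ii): testing a local \emph{minimum} of $u|_{\M_k}-\phi$ requires an \emph{upper} bound on $u$ at nearby points along a trajectory, but the forward sub-optimality $u(t,x)\le u(t+h,y(t+h))+\int_t^{t+h}\ell$ bounds $u(t+h,y(t+h))$ from \emph{below}, and the minimum inequality $u(t+h,y(t+h))-\phi(t+h,y(t+h))\ge u(t,x)-\phi(t,x)$ is also a lower bound on $u(t+h,\cdot)$; the two estimates point the same way and do not chain, so your difference-quotient computation does not produce $-\phi_t+(-\bar p\cdot D\phi+\bar q)\le 0$. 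The paper instead uses the \emph{backward} sub-optimality of Proposition \ref{DPPback}, $u(t_0,x_0)\ge u(t_0-h,y(t_0-h))-\int_{t_0-h}^{t_0}\ell$, along a $C^1$ trajectory that \emph{ends} at $(t_0,x_0)$, stays in $\M_k$, and has prescribed terminal velocity $f(x_0,a)$ for an arbitrary $a\in A_{\M_k}(x_0)$ (Lemma \ref{trajectories}, which rests on the local Lipschitz continuity of $G_{\M_k}$ from Proposition \ref{PROPH3}(i)); this yields $\phi(t_0-h,y(t_0-h))-\phi(t_0,x_0)\le \eta(t_0-h)-\eta(t_0)$ and hence the inequality for \emph{every} $a\in A_{\M_k}(x_0)$, which is needed before taking the supremum defining $H_{\M_k}$ --- a point your argument also skips, since you only obtain the inequality for the particular $(\bar p,\bar q)$ realized by a given trajectory.

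For (ii)$\Rightarrow$(i), the single-stratum statement you need is a \emph{for-all-trajectories} (strong invariance) property --- the paper shows the epigraph of $u$ over $\overline{\M_k}$ is strongly invariant for the backward augmented dynamics via proximal normals, Rockafellar's horizontal theorem and Lemma \ref{strinv} (Proposition \ref{PROsubmk2}) --- whereas you state it as ``there is a tangential trajectory'', the existential quantifier appropriate to supersolutions. More seriously, your ``finite or countable concatenation'' across the visited strata fails for chattering trajectories: when the interface crossing times accumulate, telescoping the single-stratum inequalities through the accumulation points requires continuity of $u$ along the trajectory, which is unavailable for a merely lsc $u$ (the Lipschitz-on-$\Gamma$ machinery of Proposition \ref{PROkey} is ruled out under {\bf(H3)}). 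The paper's mechanism is different: by Lemma \ref{tretre} any trajectory is approximated by a \emph{regular} one, with finitely many stratum switches, slightly perturbed initial data and the \emph{same endpoint}; the concatenation argument applies to the regular approximants, and lower semicontinuity of $u$ at the initial point passes the inequality to the limit. Proposition \ref{PROPH3}(ii) is indeed relevant, but as an ingredient in building those regular approximants, not as a direct time-rescaling of the original trajectory.
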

Since the proof of Theorem \ref{CHARAsubback} follows  the strategy used for a stratified state constrained Mayer problem in  \cite{HZW}, Proposition $3.5$, we give it in Appendix $B$.  However, we remark that our setting is different from \cite{HZW}, in particular we have discontinuous and unbounded dynamic and cost on each interfaces.

\begin{thrm}\label{CHARAsub}
 Assume {\bf(H1)}, {\bf (HF)}, {\bf (HL)}, {\bf(HG)}, { \bf(H2)}. Let $u:[0,T]\times\R^d\ra\R$ be an usc function, such that $u$ is continuous on $\Gamma$ and the restriction of $u$ on $[0,T]\times \Gamma$ is locally Lipschitz continuous. Then the following are equivalent.
 \begin{enumerate}[(i)]
  \item 
  $u$ satisfies the sub-optimality;
  \item
  $u$ is the subsolution to \eqref{HJBOmi}-\eqref{HJBHE};
  \item
  $u$ is the subsolution to \eqref{HJBOmi}-\eqref{HJBHGamma}.
 \end{enumerate}
\end{thrm}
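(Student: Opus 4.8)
The plan is to prove the cycle of implications (i)$\Rightarrow$(ii)$\Rightarrow$(iii)$\Rightarrow$(i), exploiting the controllability assumption \textbf{(H2)} to upgrade the information available on the interfaces. The implication (ii)$\Rightarrow$(iii) is essentially immediate from the definitions: a subsolution to \eqref{HJBOmi}-\eqref{HJBHE} tests, on each $\M_k$, against the essential Hamiltonian built from $A^E_{\M_k}$, and since $A_{\M_k}(x)\subseteq A^E_{\M_k}(x)$ for the subdomains and the relevant inclusions of control sets hold on the interfaces, the inequality with $H_{\M_k}$ follows. The implication (iii)$\Rightarrow$(i), i.e.\ recovering the sub-optimality principle from the family of tangential subsolution inequalities, would be obtained by the gluing argument of \cite{BW}: on each $\M_k$ the augmented dynamics $G_{\M_k}$ is locally Lipschitz (Proposition \ref{PROPGM}), so a standard viscosity argument gives the sub-optimality along trajectories that stay in a single stratum; then, using \textbf{(H2)} to control the time spent near lower-dimensional strata and the local Lipschitz regularity of $u|_{[0,T]\times\Gamma}$ to pass to the limit, one concatenates these local inequalities into the global sub-optimality statement. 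The superlinear growth and compactness of $G$ guarantee the existence of the optimal trajectory along which sub-optimality must be checked.

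The implication (i)$\Rightarrow$(ii) is where the main work lies, and I expect it to be the principal obstacle. One must show that if $u$ satisfies sub-optimality then, for every $k$ with $x_0\in\overline{\M_k}$ and every test function $\phi$ that is $C^1$ on $(0,T)\times\overline{\M_k}$ with $u-\phi$ attaining a local max at $(t_0,x_0)$ over $(0,T)\times\overline{\M_k}$, the inequality $-p_t+\sup_{a\in A^E_{\M_k}(x_0)}\{-p_x\cdot f(x_0,a)-\ell(x_0,a)\}\le 0$ holds with $(p_t,p_x)=\nabla_{\overline{\M_k}}\phi(t_0,x_0)$. The natural route: fix a control $a\in A^E_{\M_k}(x_0)$, so $f(x_0,a)\in F^E_{\M_k}(x_0)\subseteq \T_{\overline{\M_k}}(x_0)$; using \textbf{(H2)} and the construction of essential trajectories, build an admissible trajectory $y(\cdot)$ issuing from a point near $x_0$ that approximately follows the constant velocity $f(x_0,a)$ while remaining in $\overline{\M_k}$ for a short time; apply sub-optimality along $y$, divide by $h$, and let $h\to 0^+$. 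The delicate points are (a) that the trajectory must stay in $\overline{\M_k}$ so that the restricted test function $\phi|_{(0,T)\times\overline{\M_k}}$ and its extended differential $\nabla_{\overline{\M_k}}\phi$ are what gets tested — this is exactly where normal controllability from \textbf{(H2)} is used to push trajectories back onto the stratum — and (b) that the local Lipschitz continuity of $u$ on $[0,T]\times\Gamma$ is needed to handle the error terms coming from the fact that $u$ is only usc and the trajectory endpoints move within $\Gamma$; without this regularity the $\limsup$ of the difference quotients of $u$ could not be controlled.

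A technically cleaner organization may be to first reduce to the interface strata $\Gamma_j$ (on the open cells $\Omega_i$ the equivalence of sub-optimality and the standard subsolution property is classical, since $F_i$ is locally Lipschitz), and there to work with the augmented dynamics $G^E_{\Gamma_j}$, using Proposition \ref{PROPGM} to get its local Lipschitz regularity and hence a genuine one-sided differentiability of $u$ along tangent directions; then the computation $-p_t - p_x\cdot f(x_0,a) - \ell(x_0,a)\le 0$ for each $a\in A^E_{\Gamma_j}(x_0)$ follows by choosing reachable trajectories as in Proposition \ref{PROPH3}(ii) and invoking sub-optimality. I would expect the argument to mirror closely the proof of the analogous statement in \cite{RZ} (their continuous-cost setting), with the extra care demanded by the unbounded dynamics and costs handled via the growth bounds in Remark \ref{grownrem}. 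The main obstacle, to reiterate, is keeping the approximating trajectories inside $\overline{\M_k}$ while extracting the correct limiting inequality, and this is precisely the step that forces the hypotheses \textbf{(H2)} and the Lipschitz regularity of $u$ on the interfaces into the statement.
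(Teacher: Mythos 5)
Your plan is correct and follows essentially the same route as the paper: the cycle (i)$\Rightarrow$(ii)$\Rightarrow$(iii)$\Rightarrow$(i), with (ii)$\Rightarrow$(iii) by inclusion of the control sets, (i)$\Rightarrow$(ii) by constructing a $C^1$ trajectory in $\overline{\M_k}$ realizing each essential velocity (the paper's Lemma \ref{corGE}, which in fact needs no controllability) and passing to the limit in the difference quotients, and (iii)$\Rightarrow$(i) by the stratified gluing argument using the Lipschitz continuity of $G_{\M_k}$ from Proposition \ref{PROPGM}, a Filippov-type approximation near the interfaces, and the Lipschitz regularity of $u$ on $[0,T]\times\Gamma$. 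The only notable difference is one of emphasis: the paper's main technical effort sits in (iii)$\Rightarrow$(i) (the Zeno-type crossings and the induction on the dimension of the strata), not in (i)$\Rightarrow$(ii) as you anticipated.
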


\begin{proof}

Note that  (ii) $\Rightarrow$ (iii) follows since, for any $x\in\M_k$ with $k\in\{0,\ldots,l+m\}$, every element of $A^E_{\M_k}(x)$ belongs to $A^E(x)$ and then $H_{\M_k}(x,\cdot)\leq H^E(x,\cdot)$.\\
Now we prove that (i)$\Rightarrow $ (ii).
First we remark that the significant role of the essential dynamics $F^E$ is that any dynamic in $F^E$ is used by some trajectories as stated in the following lemma. For the proof we refer to \cite[Lemma 3.9]{RZ}.

\begin{lmm}\label{corGE}
Assume {\bf(H1)}, {\bf (HF)}, {\bf (HL)}, {\bf(HG)}.  Let $k\in\{0,\ldots,m+l\}$, $t\in[0,T)$ and $x\in\overline{\M_k}$. Then for any $(p,q)\in G^E_{\M_k}(x)$, $\xi\in\R$,
 there exist $\tau>t$ and a $C^1$ trajectory $(y(\cdot),\eta(\cdot))$ satisfying \eqref{DIyeta} in
$ (t,\tau)$ such that $y(t)=x, \eta(t)=\xi$,
 with $(\dot y(t),\dot \eta(t))=(p,q)$ and $y(s)\in\overline{\M_k}$ for $s\in [t,\tau]$. 
\end{lmm}

Then Lemma \ref{corGE} implies that, for any $k\in\{0,\ldots,m+l\}$, $t\in (0,T)$, $x\in\R^d$ and $a\in A^E(x)$ such that $f(x,a)\in F^E_{\M_k}(x)$ where $x\in\overline{\M_k}$,   there exists $\tau>t$, $y,\eta\in C^1[t,\tau)$ satisfying \eqref{DIyeta} in $ (t,\tau)$ such that $ y(t)=x,\ \eta(t)=u(t,x)$
with $(\dot y(t),\dot \eta(t))=(f(x,a),-\ell(x,a))$ and $y(s)\in\overline{\M_k}$ for $s\in [t,\tau]$.

The sub-optimality of $u$ implies that
\[
u(t,x)\leq u(t+h,y(t+h))+\int^{t+h}_t \ell(y(s),\alpha(s))ds,
\]
where $(y,\alpha)$ satisfies \eqref{DSy}. The definition of $\eta$ implies that
\[
\eta(t+h)\leq u(t,x)-\int^{t+h}_t \ell(y(s),\alpha(s))ds,\ \text{for}\ h\in[0,\tau-t].
\]
Thus, we have
\[
\eta(t+h)\leq u(t+h,y(t+h)).
\]
For any $\phi\in C((0,T)\times\R^d)$ satisfying $\phi\in C^1((0,T)\times\overline{\M_k})$ such that $u-\phi$ attains a local maximum at $(t,x)$, we have
\[
u(t+h,y(t+h))-\phi(t+h,y(t+h))\leq u(t,x)-\phi(t,x).
\]
Then we obtain
\[
\eta(t+h)\leq \phi(t+h,y(t+h))-\phi(t,x)+\eta(t).
\]
Since $y(s)\in \overline{\M_k}$ for $s\in[t,t+h]$, we then deduce that
\[
\dot \eta(t)\leq \partial_t\phi(t,x)+D_{\overline{\M_k}}\phi(t,x)\cdot \dot y(t),
\]
i.e.
\[
-\partial_t\phi(t,x)-D_{\overline{\M_k}}\phi(t,x)\cdot f(x,a)-\ell(x,a)\leq 0.
\]
Now we prove that (iii)$\Rightarrow$(ii).
Since the proof is quite long, we divide it into four steps. In \textbf{Step 1} we treat the trajectories staying in one subdomain  (see Proposition \ref{PROsubmk}). In \textbf{Step 2}  we deal with trajectories  exhibiting a type of "Zeno" effect, i.e crossing the interfaces infinitely during finite time  (see Proposition \ref{PROkey}). 
 In \textbf{Step 3} we deal with the general case  (Proposition \ref{PROPsub2}). Finally in \textbf{Step 4} we conclude the proof of (iii) $\Rightarrow$ (ii).

\begin{step}{{\bf 1}-Trajectories in one subdomain.}

\upshape
\begin{prpstn}\label{PROsubmk}
Assume {\bf(H1)}, {\bf (HF)}, {\bf (HL)}, {\bf(HG)}. Let $u$ be an usc subsolution to \eqref{HJBOmi}-\eqref{HJBHGamma}, $k\in\{0,\ldots,m+l\}$ and $(y(\cdot),\eta(\cdot))$ satisfying \eqref{DIyeta} on some $[a,b]\subset [0,T]$ with $y(s)\in\M_k$ for $s\in[a,b]$.
 Then it holds that
 \[
 u(a,y(a))-\eta(a)\leq u(b,y(b))- \eta(b).
 \]
\end{prpstn}
\begin{proof}
 Using the fact that $y(s)\in\M_k$ for $s\in[a,b]$, then we deduce that
 \[
 \left(\dot y(s),\dot \eta(s)\right)\in G_{\M_k}(y(s)),\ \forall\,s\in (a,b),
 \]
 where $G_{\M_k}$ is Lipschitz continuous. Let $\phi \in C^0((0,T)\times \R^d) \cap C^1((0,T)\times \M_k)$ and $(t,x)$ a local maximum point of $u-\phi$ on $(0,T) \times \M_k$. Since  $u$ is subsolution to \eqref{HJBOmi}, we have on $(0,T)\times\M_k$ 
 \[
 -\partial_t \phi(t,x)+\sup_{a\in A^E_{\M_k}(x)}\{-f(x,a)\cdot D\phi(t,x)-\ell(x,a)\}\leq 0,
 \]
Then, by the definition of $\eta$, we have
 \[
 -\partial_t \phi(t,x)+\sup_{(p,q)\in G_{\M_k}(x)}\{-p\cdot D\phi(t,x)+q\}\leq 0.
 \]
 We set
 \[
 \xi:=u(a,y(a))-\eta(a).
 \]
 By applying \cite[Theorem 4.3.8]{CLSW} for the multifunction $\{1\}\times G_{\M_k}(\cdot)$ and $\Hp(u)\cap(\R\times\M_k\times\R)$, since $(a,y(a),\eta(a)+\xi)\in\Hp(u)\cap(\R\times\M_k\times\R)$ we obtain
 \[
 (s,y(s),\eta(s)+\xi)\in\Hp(u)\cap(\R\times\M_k\times\R)\ \forall\,s\in [a,b].
 \]
 By taking $s=b$ we finally get
 \[
 u(b,y(b))\geq \eta(b)+\xi,
 \]
 which ends the proof.
\end{proof}
\end{step}
\begin{step}{{\bf 2}-"Zeno" type trajectories.}
\upshape
\begin{prpstn}\label{PROkey}
Assume {\bf(H1)}, {\bf (HF)}, {\bf (HL)}, {\bf(HG)}, {\bf (H2)}. Let $u$ be an usc subsolution to \eqref{HJBOmi}-\eqref{HJBHGamma} such  that $u$ is continuous on $\Gamma$ and the restriction of $u$ on $[0,T]\times \Gamma$ is locally Lipschitz continuous. Take $\Gamma_k$ for some $\ k\in\{0,\ldots,l\}$ and $D$ a union of subdomains with $\Gamma_k\subset\overline \D$. 
 Assume that $\D$ enjoys the following property:
 for any $(y(\cdot),\eta(\cdot))$ satisfying \eqref{DIyeta} on some $[a,b]\subset [0,T]$ with $y(s)\in \D$ for $s\in[a,b]$, it holds that
 \begin{equation}\label{inD}
 u(a,y(a))-\eta(a)\leq u(b,y(b))- \eta(b).
 \end{equation}
 Then for any $(y(\cdot),\eta(\cdot))$ satisfying \eqref{DIyeta} on some $[a,b]\subset [0,T]$ with $y(s)\in\D\cup\Gamma_k$ for $s\in[a,b]$, it still holds that 
 \[
 u(a,y(a))-\eta(a)\leq u(b,y(b))- \eta(b).
 \]
\end{prpstn}
\begin{proof}
 Let $(y(\cdot),\eta(\cdot))$ satisfy \eqref{DIyeta} on some $[a,b]\subset [0,T]$ with $y(s)\in\D\cup\Gamma_k$ for $s\in[a,b]$. 
  Without loss of generality, suppose that $y(a)\in\Gamma_k$ and $y(b)\in\Gamma_k$. Otherwise, suppose for example $y(a) \notin \Gamma_k$, then $y(a) \in \D$. We consider the first arrival time $\tau^1$ of $y$ for $\Gamma_k$ and we take $\e>0$ small enough such that
  $$
  [a,\tau^1-\e]\in [a, \tau^1).
  $$
  By \eqref{inD} we have
  $$
  u(a, y(a))-\eta(a)\leq u(\tau^1-\e), y(\tau^1-\e))-\eta(\tau^1-\e)
  $$
  and we conclude sending $\e \to 0$, by the continuity of $y(\cdot), \eta(\cdot) $ and $u(\cdot, \cdot)$.  Analogously, we treat the case $y(b) \in \D$ by considering the last exit time of $y$ for $\Gamma_k$.\\
  We select a compact set $K\subset \R^d$
  containing in its interior the reachable set
  $$
  R_{G_{\Gamma_k}}
  (y([a, b]) \cap \Gamma_k, b)=\bigcup_{t\in[a,b]} \{x \in \R^d\, |\, \exists \quad \mbox{traj. } w \mbox{ of } G_{\Gamma_k} \mbox{ with } w(a) \in y([a, b]) \cap \Gamma_k, w(t) = x\}.
  $$
 We denote by $\Gamma^\natural_k$ an open neighbourhood of $\Gamma_k$ such that $y([a,b])\subset \Gamma^\natural_k$ and we introduce the following notations that will appear in the
  forthcoming estimates.
  \begin{itemize}
  	\item $L_u$ is the Lipschitz constants of $u$ in $(K \cap \Gamma_k) \times [0,T]$;
  	\item $M$ estimates from above the diameter of $G_{\Gamma_k}(x)$ for $x \in R_{G_{\Gamma_k}} (y[a, b] \cap \Gamma_k, b);$
  	\item $L_G$ is a Lipschitz constant for $G_{\Gamma_k}$ (suitably extended outside the interfaces, see Corollary $A.2$ of \cite{RSZ}) in $K \cap \Gamma^\natural_k$.
  \end{itemize}
  By \textbf{(H1)}, $\Gamma_k\cap\D=\emptyset$.  

 Let $J:=\{s\in [a,b]\,:\,y(s)\not\in \Gamma_k\}$, then $J$ is an open set and can be written as the unions of disjoint intervals:
 \[
 J=\bigcup^{\infty}_{n=1}(a_n,b_n).
 \]
 For a fixed $p\in\N$, we set
 \[
 J_p:=\bigcup^p_{n=1}(a_n,b_n)
 \]
 as the union of the first $p$ intervals. After reindexing, we assume without loss of generality that
 \[
 a_1<b_1\leq a_2<b_2\leq \cdots\leq a_p<b_p.
 \]
 We set $b_0:=a$ and $a_{p+1}:=b$, and we choose $p$ sufficiently large such that
 \[
 \meas(J\backslash J_p)<\frac{r}{2Me^{LT}},
 \]
 where $r>0$ is given by
 \[
 r:=\inf\{\|y(s)-z\|\,:\, s\in[a,b],\ z\in\overline{\Gamma_k}\backslash \Gamma_k\}.
 \]
 
 At first, we focus on the part of $y(\cdot)$ restricted on $[a_n,b_n]$ for $n=1,\ldots,p$. Note that for $s\in(a_n,b_n)$, $y(s)\in\D$. Let $\e>0$ small enough such that
 \[
 [a_n+\e,b_n-\e]\subset (a_n,b_n),
 \]
 then by the assumption, it follows that
 \[
 u(a_n+\e,y(a_n+\e))-\eta(a_n+\e)\leq u(b_n-\e,y(b_n-\e))- \eta(b_n-\e).
 \]
 By the continuity of $y(\cdot)$, $\eta(\cdot)$ and $u(\cdot,\cdot)$, we obtain by setting $\e \to 0$
 \[
 u(a_n,y(a_n))-\eta(a_n)\leq u(b_n,y(b_n))- \eta(b_n).
 \]
 The next step is to deal with the part of $y(\cdot)$ restricted on $[b_n,a_{n+1}]$ for $n=0,\ldots,p$. We set $\e_n:=\meas([b_n,a_{n+1}]\cap J)$, then $\sum^p_{n=0}\e_n=\meas(J\backslash J_p)$. 
 
 For any $s\in [b_n,a_{n+1}]\backslash J$, $y(s)\in \Gamma_k$. It follows that
 \[
 (\dot y(s),\dot \eta(s))\in G_{\Gamma_k}(y(s))\ \text{a.e.}\ s\in [b_n,a_{n+1}]\backslash J.
 \]
 Now we calculate how far $(y(\cdot),\eta(\cdot))$ is from any trajectory lying in $\Gamma_k$ driven by the dynamics $G_{\Gamma_k}$ by
 \[
 \xi_n:=\int^{a_{n+1}}_{b_n}\dist\left((\dot y(s),\dot \eta(s)),G_{\Gamma_k}(y(s))\right)ds\leq 2M\e_n.
 \]
 By Proposition \ref{PROPGM}, $G_{\Gamma_k}$ is locally Lipschitz. Then we can apply Filippov's Theorem (see \cite{C}, Theorem $3.1.6$ and also \cite{CLSW}, Proposition $3.2$ and we get that there exists $(z_n,\zeta_n)$ satisfying
 \[
 (\dot z_n(s),\dot \zeta_n(s))\in G_{\Gamma_k}(z_n(s)),\ \text{a.e.}\ s\in [b_n,a_{n+1}]
 \]
 with $(z_n(b_n),\zeta_n(b_n))=(y(b_n),\eta(b_n))$, and 
 \[
 \|(z_n(a_{n+1}),\zeta_n(a_{n+1}))-(y(a_{n+1}),\eta(a_{n+1}))\|\leq e^{L_G(a_{n+1}-b_n)}\xi_n\leq 2Me^{L_G(a_{n+1}-b_n)}\e_n.
 \]
 From the above properties of $z_n$ and the choice of $p$, we observe that $z_n(s)\in\Gamma_k$ for $s\in[b_n,a_{n+1}]$. Thus, from Proposition \ref{PROsubmk} one obtains
 \[
 u(b_n,z_n(b_n))-\zeta_n(b_n)\leq u(a_{n+1},z_n(a_{n+1}))-\zeta_n(a_{n+1}).
 \]
 This implies for $p$ big enough
 \[
 u(b_n,y(b_n))-\eta(b_n)\leq u(a_{n+1},y(a_{n+1}))-\eta(a_{n+1})+2M(L_u+1)e^{L_G(a_{n+1}-b_n)}\e_n.
 \]
   Then for $n=0,\ldots,p$, we deduce that
 \[
 u(a_n,y(a_n))-\eta(a_n)\leq u(a_{n+1},y(a_{n+1}))-\eta(a_{n+1})+2M(L_u+1)e^{L_G(a_{n+1}-b_n)}\e_n.
 \]

 Finally,
 \begin{eqnarray*}
  u(a,y(a))-\eta(a) &=& u(b_0,y(b_0))-\eta(b_0) \\
  &\leq& u(a_1,y(a_1))-\eta(a_1)+2M(L_u+1)e^{L_G(a_1-a)}\e_0 \\
  &\leq& u(a_2,y(a_2))-\eta(a_2)+2M(L_u+1)e^{L_G(a_2-a)}(\e_0+\e_1) \\
  &\cdots& \\
  &\leq& u(a_{p+1},y(a_{p+1}))-\eta(a_{p+1})+2M(L_u+1)e^{L_G(a_{p+1}-a)}\sum^p_{n=0}\e_n \\
  &=&  u(b,y(b))-\eta(b)+2M(L_u+1)e^{L_G(b-a)}\meas(J\backslash J_p).
 \end{eqnarray*}
 By taking $p\ra +\infty$, one has $\meas(J\backslash J_p)\ra 0$ and the desired result is obtained.
\end{proof}
\end{step}
\begin{step}{{\bf 3}-General case.}
\upshape
\begin{prpstn}\label{PROPsub2}
Assume {\bf(H1)}, {\bf (HF)}, {\bf (HL)}, {\bf(HG)}, {\bf (H2)}. Let $u$ be an usc subsolution to \eqref{HJBOmi}-\eqref{HJBHGamma}. If $u$ is continuous on $[0,T]\times\Gamma$ and the restriction of $u$ on $[0,T]\times \Gamma$ is locally Lipschitz continuous, then
 for any $(y(\cdot),\eta(\cdot))$ satisfying \eqref{DIyeta} on some $[a,b]\subset [0,T]$, it holds that
 \begin{equation}\label{eqclaim}
 u(a,y(a))-\eta(a)\leq u(b,y(b))- \eta(b).
 \end{equation}
\end{prpstn}
\begin{proof}
 Let $\D$ be a union of some subdomains and $d_ \D\in\{0,\ldots,d\}$ be the minimal dimension of the subdomains which are subsets of $\D$. The proof of \eqref{eqclaim} is based on the following induction argument with regard to $d_ \D$:
 \begin{description}
  \item[\textbf{Claim}]: for any $\tilde d=0,\ldots,d$, any $\D$ with $d_ \D\geq \tilde d$ and any $(y,\eta)$ driven by $G$ with $y$ lying within $\D$, \eqref{eqclaim} holds.
 \end{description}
 
 \medskip
Let us first check the case when $\tilde d=d$. In this case, $d_ \D=d$, then $\D$ is a union of $d$-manifolds, which are disjoint by {\bf (H1)}. For any trajectory $(y,\eta)$ driven by $G$ with $y$ lying within $\D$, $y$ lies entirely within one of the $d$-manifolds. Hence,  {\bf Claim} follows by Proposition \ref{PROsubmk}.
 
 Now we assume that {\bf Claim} is true for some $\tilde d\in\{1,\ldots,d\}$ and we prove that {\bf Claim} still holds true for $\tilde d-1$. In this case, $d_ \D=\tilde d-1$. Then the following three cases can occur.
 
{\bf Case 1}: if $\D$ contains only one subdomain, i.e. $\D=\M_k$ for some $k\in\{0,\ldots,l+m\}$, by Proposition \ref{PROsubmk} it follows that {\bf Claim} holds.

{\bf Case 2}: If $\D$ contains more than one subdomain and $\D$ is connected, let $\M_1',\ldots,\M_p'$ be all the subdomains of $\D$ with the dimension $d_ \D$. Then $\K:=\D\backslash(\cup^p_{k=1}\M_k')$ is a union of subdomains with dimension greater than $\tilde d$. As an induction hypothesis, \eqref{eqclaim} holds true for any $(y,\eta)$ driven by $G$ with $y$ lying within $\K$. 
  
  Now note that, for all $i \in \{1,\cdots, p\}$ there exists some $k$ such that $\M_i'= \Gamma_k$ and $\M_i'\subset\overline \K$. Then Proposition \ref{PROkey} implies that \eqref{eqclaim} holds true for any $(y,\eta)$ driven by $G$ with $y$ lying within $\K\cup\M_1'$. We continue applying Proposition \ref{PROkey} for $\K\cup\M_1'$ and $\M_2'$ until $\K\cup\M_1'\cup\cdots\cup\M_{p-1}')$ and $\M_p'$, finally it is obtained that \eqref{eqclaim} holds true for any $(y,\eta)$ driven by $G$ with $y$ lying within $\D$ ($=\K\cup\M_1'\cup\cdots\cup\M_{p}'$).

 {\bf Case 3}: If $\D$ is not connected, for any $(y,\eta)$ driven by $G$ with $y$ lying within $\D$, $y$ lies within one connected component of $\D$ since $y$ is continuous. Then the proof follows the same argument as in the above case. And the induction step is complete.\\
 Finally, we conclude the proof by taking $\D=\R^d$ with $d_ \D$ being the dimension of $\Gamma_0$.

\end{proof}
\end{step}
\begin{step}{{\bf 4}}
\upshape
Finally we conclude the proof of (iii) $\Rightarrow$ (i).
 For any $(t,x)\in[0,T]\times\R^d$, and any $(y(\cdot),\alpha(\cdot))$ satisfying \eqref{DSy}, we set
 \[
 \eta(s):=u(t,x)-\int^{s}_t \ell(y(s'),\alpha(s'))ds',\ \text{for}\ s\in[t,T].
 \]
 Since $(y(\cdot),\eta(\cdot))$ satisfies \eqref{DIyeta}, Proposition \ref{PROPsub2} implies that
 \[
 u(t,x)-\eta(t)\leq u(t+h,y(t+h))-\eta(t+h),\ \forall\,h\in [0,T-t],
 \]
 i.e.
 \[
 0\leq u(t+h,y(t+h))-u(t,x)+\int^{t+h}_t \ell(y(s),\alpha(s))ds,\ \forall\,h\in [0,T-t],
 \]
 which ends the proof.
 \end{step}
\end{proof}
\section{Proof of the main results}\label{mres}
 In this section we prove our main results, that is Theorem \ref{THMHEHGamma} and Theorem \ref{THMBilateral}. First we prove Theorem \ref{THMHEHGamma}.

\begin{proof}[Proof of Theorem \ref{THMHEHGamma}]
\begin{step}{{\bf 1}}
\upshape
First we prove the following  comparison principle. Let $u_1,u_2:[0,T]\times\R^d\ra\R$ be respectively a supersolution and subsolution to \eqref{HJBOmi}-\eqref{HJBHE} (\eqref{HJBOmi}-\eqref{HJBHGamma}) with $u_1(T,\cdot)  \leq u_2(T,\cdot)$. Assume, in addition, that $u_1$ is continuous at any point of $\Gamma$. Then 
 \[
 u_1(t,x)\leq u_2(t,x),\ \forall\,t\in(0,T),\ x\in\R^d.
 \]
Indeed, by Theorem \ref{CHARAsuper}, $u_2$ satisfies the super-optimality, i.e. there exists $\bar y,\bar \alpha$ such that
 \[
 u_2(t,x)\geq u_2(T,\bar y(T))+\int^T_t\ell(\bar y(s),\bar \alpha(s))ds.
 \]
 By Theorem \ref{CHARAsub}, $u_1$ satisfies the sub-optimality. Then we have
 \[
 u_1(t,x)\leq u_1(T,\bar y(T))+\int^T_t\ell(\bar y(s),\bar \alpha(s))ds.
 \]
 Then we deduce that
 \[
 u_1(t,x)-u_2(t,x)\leq u_1(T,\bar y(T))-u_2(T,\bar y(T))\leq 0.
\]
\end{step}
\begin{step}{{\bf 2}}
\upshape
Now we prove that the value function $v$ is the unique continuous viscosity solution to \eqref{HJBOmi}-\eqref{HJBHE} (\eqref{HJBOmi}-\eqref{HJBHGamma}) with $\lambda_l$-superlinear growth. The continuity and  the $\lambda_l$-superlinear growth of $v$ are given respectively in  Proposition \ref{contv} and Proposition \ref{lambdagr}. In addition, the restriction of $v$ on $[0,T]\times \Gamma$ is locally Lipschitz continuous (see Appendix $A$, Proposition \ref{contv}).  Then, by Proposition \ref{PROdpp}, Theorem \ref{CHARAsuper} and Theorem \ref{CHARAsub}, $v$ is a viscosity solution to \eqref{HJBOmi}-\eqref{HJBHE} (\eqref{HJBOmi}-\eqref{HJBHGamma}) with  $v(T,x)=\vp(x)$ for all $x \in \R^d$. The uniqueness of $v$ follows by \textbf{Step 1}.
\end{step}
\end{proof}

Now we prove Theorem \ref{THMBilateral}.
\begin{proof}[Proof of Theorem \ref{THMBilateral}]
We proceed as in Theorem \ref{THMHEHGamma} and first we prove the following comparison principle. Let $u_1,u_2:[0,T]\times\R^d\ra\R$ are respectively a supersolution and bilateral subsolution to \eqref{HJBOmi}-\eqref{HJBHGamma} with $u_1(T,\cdot)\leq u_2(T,\cdot)$. Then 
 \begin{equation}\label{comp2}
 u_1(t,x)\leq u_2(t,x),\ \forall\,t\in(0,T),\ x\in\R^d.
 \end{equation}
 We omit the proof of \eqref{comp2} since it follows as in Theorem \ref{THMHEHGamma} by using Theorem \ref{CHARAsubback} instead of Theorem \ref{CHARAsub}.
 Next, thanks to Proposition \ref{lambdagr}, the value function $v$ is lsc with $\lambda$-superlinear growth. Also, it is a bilateral viscosity solution to \eqref{HJBOmi}-\eqref{HJBHGamma} due to Proposition \ref{PROdpp}, Theorem \ref{CHARAsuper} and Theorem \ref{CHARAsub}. Moreover $v(T,x)=\vp(x)$ for all $x \in \R^d$. The uniqueness of a lsc bilateral solution to \eqref{HJBOmi}-\eqref{HJBHGamma} follows by the comparison principle.
\end{proof}

\section{Stability result}\label{sres}
Let $(f^n)_{n \in \N}$ and $(\ell^n)_{n \in \N}$ be a sequence of functions defined on $\R^d \times \As$ such that
$$
f^n\ra f,\,\, \ell^n\ra \ell\ \text{locally uniformly in } \R^d\times \As.
$$
For convenience of notation we denote for each $n \in \N$
$$
F^n(x)=\{f^n(x,a)\, |\, a \in \As\},\, \quad L^n(x)=\{\ell^n(x,a)\, |\, a \in \As\}
$$
and we suppose that $F^n$, $L^n$ satisfies {\bf (HF)-(HL)} with constants uniform in $n$. 

As in Definition \ref{DefFE} and Definition \ref{DEFG}, we redefine the essential dynamics $F^{E,n}$, the augmented dynamics $G^n,G^n_{\M_k}$ and the essential control sets $A^{E,n}$ and $A^{E,n}_{\M_k}$ for each $\M_k\in\Ms$. 

We set
\[
H^{E,n}(x,p):=\sup_{a\in A^{E,n}(x)}\{-f^n(x,a)\cdot p-\ell^n(x,a)\},
\]
and consider the following equation:
\begin{equation}\label{HJBn}
-\partial_t u_n+H^{E,n}(x,Du_n)=0.
\end{equation}

We have the following stability result for the supersolutions.
\begin{thrm}\label{Stabilitysuper}
	Assume {\bf (H1)}, {\bf(HF)}, {\bf(HL)}, {\bf (HG)}, {\bf (H2)}. 
	If $u_n$ is a lsc supersolution to
	\[
	-\partial_t u_n+H^{E,n}(x,Du_n)=0,
	\]
	and $u_n$ converges to a lsc function $u$ locally uniformly in $[0,T]\times\R^d$, then $u$ is a supersolution to \eqref{HJBOmi}-\eqref{HJBHE}.
\end{thrm}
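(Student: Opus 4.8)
The plan is to argue directly from the viscosity (test-function) definition of supersolution, using the standard machinery for passing to the limit in viscosity inequalities, combined with the control-theoretic identification of the essential Hamiltonian. First I would fix $(t_0,x_0)\in(0,T)\times\R^d$ and $\phi\in C^1((0,T)\times\R^d)$ such that $u-\phi$ attains a strict local minimum at $(t_0,x_0)$ (one may always reduce to the strict case by subtracting a quadratic bump). Since $u_n\to u$ locally uniformly and each $u_n$ is lsc, a routine argument produces points $(t_n,x_n)\to(t_0,x_0)$ at which $u_n-\phi$ attains a local minimum, so that the supersolution property of $u_n$ for \eqref{HJBn} yields
\[
-\phi_t(t_n,x_n)+H^{E,n}(x_n,D\phi(t_n,x_n))\geq 0.
\]
The goal is to pass to the limit and obtain $-\phi_t(t_0,x_0)+H^{E}(x_0,D\phi(t_0,x_0))\geq 0$.

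The crux is the lower semicontinuity of the essential Hamiltonians along the approximation: I need $\liminf_n H^{E,n}(x_n,p_n)\leq H^{E}(x_0,p_0)$ whenever $(x_n,p_n)\to(x_0,p_0)$ — or rather its contrapositive, that the limit inequality is preserved. Here I would \emph{not} try to work with $H^{E,n}$ directly, because the essential control sets $A^{E,n}(x)$ need not behave continuously; instead I would use the control-theoretic characterization from Section \ref{supsup}. Concretely, by Theorem \ref{CHARAsuper} applied to the approximating problems, $u_n$ being a supersolution to \eqref{HJBn} is equivalent to $u_n$ satisfying the super-optimality principle for the dynamics $(f^n,\ell^n)$. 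The uniform bounds {\bf (HF)}–{\bf (HL)} and the local uniform convergence $f^n\to f$, $\ell^n\to\ell$ allow a compactness/stability argument on the differential inclusions: extracting optimal (or near-optimal) trajectories $(\bar y_n,\bar\alpha_n)$ for the super-optimality of $u_n$ at $(t_0,x_0)$, passing to a uniformly convergent subsequence $\bar y_n\to\bar y$ with $\dot{\bar y}_n\rightharpoonup\dot{\bar y}$ weakly in $L^1$, and using the convexity and upper semicontinuity of $G$ (assumption {\bf (HG)} and {\bf (HF)(i)}) to conclude that $(\bar y,\bar\alpha)$ is an admissible trajectory for the limiting dynamics; then the lsc of $u$ together with $u_n\to u$ locally uniformly passes the super-optimality inequality to the limit. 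Thus $u$ satisfies the super-optimality for $(f,\ell)$, and another application of Theorem \ref{CHARAsuper} (this time in the $\Rightarrow$ direction) gives that $u$ is a supersolution to \eqref{HJBOmi}-\eqref{HJBHE}.

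The main obstacle is the passage to the limit in the trajectories of the differential inclusion: the dynamics are unbounded (only linear growth via {\bf (HF)(iii)}) and discontinuous across $\Gamma$, so the convergence lemma for differential inclusions must be applied with care — one needs the uniform Gronwall bounds of Remark \ref{grownrem} to keep $\bar y_n$ in a fixed compact set on $[t_0,T]$, and then a Filippov–Ważewski / closure-of-trajectories argument for upper semicontinuous convex-valued right-hand sides (e.g.\ via \cite{C} or \cite{CLSW}) to extract the limit trajectory, noting that the locally uniform convergence $f^n\to f$, $\ell^n\to\ell$ ensures $\limsup_n G^n(x_n)\subset G(x_0)$. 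A secondary technical point is making sure the localization in the test-function argument is compatible with the local (not global) nature of the hypotheses; this is handled by working on a fixed compact neighbourhood of $(t_0,x_0)$ and using the uniformity in $n$ of the structural constants. No controllability is actually needed for this direction (the hypothesis {\bf (H2)} is inherited from the ambient standing assumptions but is not used in the proof).
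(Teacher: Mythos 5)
Your argument is correct in substance but follows a genuinely different route from the paper's. The paper never leaves the test-function level: at the approximate minimum points $(t_n,x_n)$ it replaces the essential Hamiltonian $H^{E,n}$ by the full Hamiltonian $\sup_{(p,q)\in G^n(x_n)}\{-p\cdot D\phi+q\}$ (legitimate since $A^{E,n}(x)\subset\As$, so $H^{E,n}\le H^n_F$ and the supersolution inequality is preserved), passes to the limit using only the locally uniform convergence $G^n\to G$ and the upper semicontinuity of $G$ (which give $G^n(x_n)\subset G(x)+2\e B(0,1)$ for large $n$), and then invokes the equivalence (ii) $\Leftrightarrow$ (iii) of Theorem \ref{CHARAsuper} for the \emph{limit} problem to return to $H^E$. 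You instead convert everything to super-optimality via Theorem \ref{CHARAsuper}, pass to the limit in the trajectories, and convert back; your test-function setup in the first paragraph is then never actually used. Both routes share the key idea --- do not attempt a direct semicontinuity argument on $x\mapsto A^{E,n}(x)$ --- but yours is heavier: it needs the closure/compactness theorem for the differential inclusions, and, more importantly, it needs the implication (supersolution $\Rightarrow$ super-optimality) of Theorem \ref{CHARAsuper} for each approximating problem, which requires the augmented dynamics $G^n$ to have convex images; the standing assumptions of Section \ref{sres} only impose {\bf(HF)}--{\bf(HL)} uniformly on $(f^n,\ell^n)$, so you should either add {\bf(HG)} for each $G^n$ explicitly or switch to the paper's purely PDE-level limit, which needs convexity only for the limit $G$. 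Your closing observations --- that strictness of the minimum can always be arranged, and that {\bf(H2)} is not actually used in this direction --- are both consistent with the paper's proof.
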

\begin{proof}
	By Theorem \ref{CHARAsuper}, it suffices to prove that $u$ is a supersolution to \eqref{HJBOmi}-\eqref{HJBHGamma}.

	For any $t\in(0,T)$, $x\in\R^d$, $\phi\in C^1((0,T)\times\R^d)$ such that $u-\phi$ attains a local strict minimum at $(t,x)$, then there exists $t_n\in (0,T)$, $x_n\in\R^d$ such that $u_n-\phi$ attains a local minimum at $(t_n,x_n)$ with $t_n\ra t$, $x_n\ra x$. Thus,
	\[
	-\partial_t\phi(t_n,x_n)+\sup_{(p,q)\in G_n(x_n)}\{-p\cdot D\phi(t_n,x_n)+q\}\geq 0.
	\]
	For any $\e>0$, since $G^n\ra G$ locally uniformly and $G$ is usc, for $n$ sufficiently large we have
	\[
	G^n(x_n)\subset G(x_n)+\e B(0,1)\subset G(x)+2\e B(0,1).
	\]
	Then there exists $C>0$ such that
	\[
	-\partial_t\phi(t_n,x_n)+\sup_{(p,q)\in G(x)}\{-p\cdot D\phi(t_n,x_n)+q\}+C\e\geq 0.
	\]
	By taking $n\ra\infty$ then $\e\ra 0$, we obtain
	\[
	-\partial_t\phi(t,x)+\sup_{(p,q)\in G(x)}\{-p\cdot D\phi(t,x)+q\}\geq 0.
	\]
	The definition of $G$ then implies that
	\[
	-\partial_t\phi(t,x)+\sup_{a\in \As}\{-f(x,a)\cdot D\phi(t,x)-\ell(x,a)\}\geq 0.
	\]
	Therefore $u$ is a supersolution to \eqref{HJBOmi}-\eqref{HJBHGamma}.
\end{proof}
The stability result for the subsolutions is the following.
\begin{thrm}\label{Stabilitysub}
	Assume {\bf(HF)}, {\bf(HL)}, {\bf (HG)}, {\bf (H2)}. 
	If $u_n$ is a usc subsolution to
	\[
	-\partial_t u_n+H^{E,n}(x,Du_n)=0,
	\]
	$u_n|_{[0,T]\times\overline{\M_k}}$ is locally Lipschitz continuous and $u_n$ converges to an usc function $u$ locally uniformly in $[0,T]\times\R^d$, then $u$ is a subsolution to \eqref{HJBOmi}-\eqref{HJBHE}.
\end{thrm}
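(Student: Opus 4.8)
The plan is to mimic the structure of the proof of Theorem \ref{Stabilitysuper}, but working with the essential tangential Hamiltonians on each stratum $\overline{\M_k}$ and using the extended differentials $\nabla_{\overline{\M_k}}\phi$ from Definition \ref{DEFBvsub}. By Theorem \ref{CHARAsub} it suffices, in fact, to verify that the limit $u$ is a subsolution to \eqref{HJBOmi}-\eqref{HJBHE} in the sense of Definition \ref{DEFBvsub}; since the uniform Lipschitz bound on $u_n|_{[0,T]\times\overline{\M_k}}$ passes to the limit, $u|_{[0,T]\times\Gamma}$ is locally Lipschitz and $u$ is continuous on $\Gamma$, so the hypotheses of Theorem \ref{CHARAsub} are met.

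First I would fix $(t,x)\in(0,T)\times\R^d$, choose $k$ with $x\in\overline{\M_k}$, and take $\phi$ continuous on $(0,T)\times\R^d$ with $\phi|_{(0,T)\times\overline{\M_k}}$ of class $C^1$ such that $u-\phi$ attains a strict local maximum over $(0,T)\times\overline{\M_k}$ at $(t,x)$. By the locally uniform convergence $u_n\to u$ and a standard perturbation-of-test-function argument (restricted to the relatively closed set $(0,T)\times\overline{\M_k}$), there are maximizers $(t_n,x_n)\in(0,T)\times\overline{\M_k}$ of $u_n-\phi$ over $(0,T)\times\overline{\M_k}$ with $(t_n,x_n)\to(t,x)$. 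Applying the subsolution inequality for $u_n$ at $(t_n,x_n)$ with the stratum-$\M_k$ essential Hamiltonian gives
\[
-p_t^n+\sup_{a\in A^{E,n}_{\M_k}(x_n)}\{-p_x^n\cdot f^n(x_n,a)-\ell^n(x_n,a)\}\le 0,
\quad (p_t^n,p_x^n)=\nabla_{\overline{\M_k}}\phi(t_n,x_n).
\]
Since $\phi|_{(0,T)\times\overline{\M_k}}$ is $C^1$, $(p_t^n,p_x^n)\to(p_t,p_x)=\nabla_{\overline{\M_k}}\phi(t,x)$.

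The main obstacle is the passage to the limit in the $\sup$ over the moving control sets $A^{E,n}_{\M_k}(x_n)$: unlike in the supersolution case, here we need a \emph{lower} bound, i.e. that every essential tangential direction available at the limit point $x$ is approximable by essential tangential directions of $f^n$ at $x_n$. I would handle this at the level of the augmented tangent dynamics: pick any $a\in A^E_{\M_k}(x)$, so $f(x,a)\in F^E_{\M_k}(x)=F^{\mathrm{ext}}_k(x)\cap\T_{\overline{\M_k}}(x)$, and use that $F^{\mathrm{ext}}_k$ is the continuous extension of $F|_{\M_k}$ together with the normal controllability \textbf{(H2)} (which, via Proposition \ref{PROPGM}, makes $G_{\Gamma_j}$ — and analogously $G_{\M_k}$ — locally Lipschitz, hence in particular the tangential dynamics vary continuously) to produce, for $n$ large, controls $a_n\in A^{E,n}_{\M_k}(x_n)$ with $f^n(x_n,a_n)\to f(x,a)$ and $\ell^n(x_n,a_n)\to\ell(x,a)$; here the locally uniform convergence $f^n\to f$, $\ell^n\to\ell$ and the uniform-in-$n$ structural bounds \textbf{(HF)}--\textbf{(HL)} are what make the selection quantitatively controlled. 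Equivalently, and perhaps more cleanly, one works with $G^n_{\M_k}\to G_{\M_k}$ locally uniformly (uniform Lipschitz bounds give equi-Lipschitz limits), writes the inequality as $-p_t^n+\sup_{(p,q)\in G^n_{\M_k}(x_n)}\{-p\cdot p_x^n+q\}\le 0$, and uses $G_{\M_k}(x)\subset G^n_{\M_k}(x_n)+\varepsilon B$ for $n$ large to get $-p_t+\sup_{(p,q)\in G_{\M_k}(x)}\{-p\cdot p_x+q\}\le C\varepsilon$; letting $n\to\infty$ and then $\varepsilon\to0$, and unwinding the definition of $G_{\M_k}$ (for $(p,q)\in G_{\M_k}(x)$ there is $a\in A^E_{\M_k}(x)$ with $p=f(x,a)$, $q\le-\ell(x,a)$), yields
\[
-p_t+\sup_{a\in A^E_{\M_k}(x)}\{-p_x\cdot f(x,a)-\ell(x,a)\}\le 0,
\]
which is exactly the subsolution inequality of Definition \ref{DEFBvsub} for \eqref{HJBOmi}-\eqref{HJBHE}. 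This completes the proof.
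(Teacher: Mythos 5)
Your proposal is correct in substance and ends with the same limit passage as the paper (rewriting the inequality with the augmented tangential dynamics $G^n_{\M_k}$, using the uniform Lipschitz bounds to get $G^n_{\M_k}(x)\subset G^n_{\M_k}(x_n)+L|x_n-x|B(0,1)$, the Hausdorff convergence $G^n_{\M_k}(x)\to G_{\M_k}(x)$, and the tangent-cone inclusion $\T_{\overline{\M_k}}(x)\subset\T_{\overline{\M_k}}(x_n)$ for $n$ large --- your ``equivalently, and perhaps more cleanly'' variant is in fact exactly the paper's argument). The genuine difference is how you produce approximate maximizers $(t_n,x_n)\in\overline{\M_k}$. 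You restrict the maximization of $u_n-\phi$ to $(0,T)\times\overline{\M_k}$ from the outset, which is legitimate because Definition \ref{DEFBvsub}(1) only requires a local maximum \emph{relative to} $(0,T)\times\overline{\M_k}$; this makes the localization step elementary and does not use {\bf (H2)} or the Lipschitz hypothesis on $u_n|_{[0,T]\times\overline{\M_k}}$ there. The paper instead penalizes with $-Cd_{\overline{\M_k}}(\cdot)$, maximizes over all of $(0,T)\times\R^d$, and then forces $x_n\in\overline{\M_k}$ in two stages: the normal controllability {\bf (H2)} rules out $x_n\in\Om_i$ (the subsolution inequality at such a point would be violated for large $C=n$ because of the large normal component of the gradient of the distance), and the uniform-in-$n$ Lipschitz constants of $u_n$ on the strata rule out $x_n\in\Gamma\setminus\overline{\M_k}$. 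Your route is shorter and cleaner given the relative form of the definition; the paper's penalization buys robustness if one insists on whole-space test functions. Two minor remarks: the opening appeal to Theorem \ref{CHARAsub} is unnecessary (the conclusion to be proved is literally the subsolution property of Definition \ref{DEFBvsub}, not sub-optimality), and your first ``direct control selection'' route is vaguer than route (b) --- in particular the uniform-in-$n$ Lipschitz continuity of the tangential dynamics $G^n_{\M_k}$ needs {\bf (H2)} for the $F^n$ with a radius uniform in $n$, a point worth stating explicitly; but since route (b) is self-contained, the proof stands.
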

\begin{proof}
	Note that in $(0,T)\times\Om_i$, $i=1,\ldots,m$, the proof follows from the standard arguments for stability results on viscosity solutions since $H^{E,n}$ and $H^E$ are Lipschitz continuous in $\Om_i\times\R^d$.
	
	For any $t\in(0,T)$, $x\in\Gamma$, $\phi\in C((0,T)\times\R^d)$, $\phi\in C^1((0,T)\times\overline{\M_k})$ with $x\in\overline{\M_k}$ such that $u-\phi$ attains a local strict maximum at $(t,x)$, then $(t,x)\mapsto u(t,x)-\phi(t,x)-Cd_{\overline{\M_k}}(x)$ also attains a local strict maximum at $(t,x)$ for any constant $C>0$. Since $u_n\ra u$, there exists $t_n\in (0,T)$, $x_n\in\R^d$ such that $u_n-\phi-d_{\overline{\M_k}}$ attains a local maximum at $(t_n,x_n)$ with $t_n\ra t$, $x_n\ra x$. 
	
	\smallskip
	
	We claim that with a big enough $C$, $$x_n\in\overline{\M_k}.$$
	
	If $x_n\in\Om_i$ for some $i\in\{1,\ldots,m\}$, note that $d_{\overline{\M_k}}(\cdot)$ is differentiable in $\Om_i$. Since $u_n$ is a subsolution to \eqref{HJBn}, we have by choosing $C=n$
	\[
	-\partial_t\phi(t_n,x_n)+\sup_{a\in \As}\left\{-f^n_i(x_n,a)\cdot \left(D\phi(t_n,x_n)+n\frac{x_n-\Proj_{\overline{\M_k}}(x_n)}{|x_n-\Proj_{\overline{\M_k}}(x_n)|}\right)-\ell_i^n(x_n,a)\right\}\leq 0.
	\]
	Because of {\bf (H2)}, the above inequality does not hold true when $n$ is big enough. Then we conclude that $x_n \in \Gamma$.

	Now for any $z\in\Gamma$ close to $x$, using the fact that $u_n|_{[0,T]\times\M_k}$ is locally Lipschitz continuous,
	\begin{eqnarray*}
		&&u_n(t,z)-\phi(t,z)-C d_{\overline{\M_k}}(z)\\
		&\leq& u_n(t,P_{\overline{\M_k}}(z))-\phi(t,P_{\overline{\M_k}}(z))+(L_{u_n}+L_ \phi)d_{\overline{\M_k}}(z)-C d_{\overline{\M_k}}(z),
	\end{eqnarray*}
	where $L_{u_n},L_ \phi$ are respectively the local Lipschitz constants of $u_n$ and $\phi$. Since $L_{u_n}$ are uniform in $n$, we
	can take $C>L_{u_n}+L_ \phi$, then
	\[
	u_n(t,z)-\phi(t,z)-C d_{\overline{\M_k}}(z)\leq u_n(t,P_{\overline{\M_k}}(z))-\phi(t,P_{\overline{\M_k}}(z)),
	\]
	which implies that $x_n\in \overline{\M_k}$, and the claim is proved. 
	
	Since $u_n$ is a subsolution to \eqref{HJBOmi}-\eqref{HJBHE}, then
	\[
	-\partial_t \phi(t_n,x_n)+\sup_{a\in A^{E,n}_{\M_k}(x_n)}\{-f^n(x_n,a)\cdot D_{\overline{\M_k}}\phi(t_n,x_n)-\ell^n(x_n,a)\}\leq 0,
	\]
	which by the definition of $A^{E,n}_{\M_k}(x_n)$ and the augmented dynamics is equivalent to
	\[
	-\partial_t \phi(t_n,x_n)+\sup_{(p,q)\in G^{n}_{\M_k}(x_n)\cap\left(\T_{\overline{\M_k}}(x_n)\times\R\right)}\{-p\cdot D_{\overline{\M_k}}\phi(t_n,x_n)+q\}\leq 0,
	\]
	Note that $\phi\in C^1((0,T)\times\overline{\M_k})$, one has
	\[
	\partial_t \phi(t_n,x_n)\ra\partial_t \phi(t,x)\ \text{and}\ D_{\overline{\M_k}}\phi(t_n,x_n)\ra D_{\overline{\M_k}}\phi(t,x)\ \text{when}\ n\ra\infty.
	\]
	By the Lipschitz continuity of $F^n,L^n$ uniformly in $n$, we deduce that there exists some constant $L>0$ such that
	\[
	G^{n}_{\M_k}(x)\subset G^{n}_{\M_k}(x_n)+LB(0,|x_n-x|).
	\]
	Therefore for any $\e>0$, there exists $N_1\in\N$ such that for all $n\geq N_1$
	\[
	-\partial_t \phi(t,x)+\sup_{(p,q)\in G^{n}_{\M_k}(x)\cap\left(\T_{\overline{\M_k}}(x_n)\times\R\right)}\{-p\cdot D_{\overline{\M_k}}\phi(t,x)+q\}\leq \e.
	\]
	By the local uniform convergence of $f^n$ and $\ell^n$ in $\R^d\times\As$, we obtain $G^n_{\M_k}(x)\ra G_{\M_k}(x)$ with respect to the Hausdorff metric. Thus, for any $\e>0$, there exists $N_2\in\N$ such that for all $n\geq N_2$
	\[
	-\partial_t \phi(t,x)+\sup_{(p,q)\in G_{\M_k}(x)\cap\left(\T_{\overline{\M_k}}(x_n)\times\R\right)}\{-p\cdot D_{\overline{\M_k}}\phi(t,x)+q\}\leq \e.
	\]
	Besides, note that $x_n,x\in\overline{\M_k}$ and $x_n\ra x$, it holds that for $n$ sufficiently large
	\[
	\T_{\overline{\M_k}}(x)\subset \T_{\overline{\M_k}}(x_n).
	\]
	Consequently, for any $\e> 0$, there exists $N_3\in\N$ such that for all $n\geq N_3$
	\[
	-\partial_t \phi(t,x)+\sup_{(p,q)\in G_{\M_k}(x)\cap\left(\T_{\overline{\M_k}}(x)\times\R\right)}\{-p\cdot D_{\overline{\M_k}}\phi(t,x)+q\}\leq \e.
	\]
	The above inequality holds for arbitrary $\e> 0$, therefore
	\[
	-\partial_t \phi(t,x)+\sup_{(p,q)\in G_{\M_k}(x)\cap\left(\T_{\overline{\M_k}}(x)\times\R\right)}\{-p\cdot D_{\overline{\M_k}}\phi(t,x)+q\}\leq 0,
	\]
	which is equivalent to
	\[
	-\partial_t \phi(t_n,x_n)+\sup_{a\in A^{E}_{\M_k}(x)}\{-f(x,a)\cdot D_{\overline{\M_k}}\phi(t,x)-\ell(x,a)\}\leq 0.
	\]
	We then conclude that $u$ is a subsolution to \eqref{HJBOmi}-\eqref{HJBHE}.
\end{proof}
Finally, we provide the stability result with respect to the final cost $\vp$.
\begin{thrm}
	Assume {\bf (H$\vp$1)} {\bf(HF)}, {\bf(HL)}, {\bf (HG)}, {\bf (H2)}. Let $\vp_n:\R^d\times\R$ be a sequence of Lipschitz continuous functions, such that $\vp_n\ra\vp$ locally uniformly in $\R^d$.
	Let $u_n$ be the solution to
	\begin{equation}\label{HJBn}
	\left\{
	\begin{array}{ll}
	-\partial_t u_n(t,x)+H^{E,n}(x,Du_n(t,x))=0 & \text{in}\ (0,T)\times\R^d,\\
	u_n(T,x)=\vp_n(x) & \text{in}\ \R^d,
	\end{array}
	\right.
	\end{equation}
	 such that the restriction of $u_n$ to $[0,T]\times \Gamma$ is locally Lipschitz continuous. If $u_n$ converges to a continuous function $u$ locally uniformly in $[0,T]\times\R^d$, then $u$ is the solution to \eqref{HJBOmi}-\eqref{HJBHE}.
\end{thrm}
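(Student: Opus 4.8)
The plan is to show that the continuous limit $u$ is a viscosity solution of \eqref{HJBOmi}-\eqref{HJBHE} in the sense of Definition~\ref{DEFBvs} which moreover lies in the uniqueness class of Theorem~\ref{THMHEHGamma}, and then to close the argument by the comparison principle proved there. The two one-sided stability results of this section do almost all the work: since $u$ is continuous it is simultaneously lsc and usc, hence an admissible limit in both Theorem~\ref{Stabilitysuper} and Theorem~\ref{Stabilitysub}. Before invoking them I would record a uniformity observation: because the approximating data $(f^n,\ell^n)$ obey {\bf(HF)}--{\bf(HL)} (together with {\bf(HG)} and {\bf(H2)}) with constants independent of $n$, Theorem~\ref{THMHEHGamma} applied to the $n$-th problem identifies $u_n$ with the corresponding value function, so by Proposition~\ref{contv} and Proposition~\ref{lambdagr} --- whose quantitative content depends only on the structural constants --- the restrictions $u_n|_{[0,T]\times\Gamma}$ are locally Lipschitz continuous with constants uniform in $n$, and the $u_n$ have $\lambda_l$-superlinear growth uniformly in $n$ (on each $\overline{\Omega_i}$ the local Lipschitz regularity of $u_n$ then follows from that of $f^n,\ell^n$ together with that of $u_n|_{[0,T]\times\Gamma}$).

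With this in hand, the first two steps are immediate. Observing that $u_n$ is in particular an lsc supersolution of $-\partial_t u_n+H^{E,n}(x,Du_n)=0$ and that $u_n\to u$ locally uniformly with $u$ lsc, Theorem~\ref{Stabilitysuper} gives that $u$ is a supersolution of \eqref{HJBOmi}-\eqref{HJBHE}. Likewise $u_n$ is a usc subsolution of the same equation with $u_n|_{[0,T]\times\overline{\M_k}}$ locally Lipschitz (with uniform constants) for every $k$, and $u_n\to u$ locally uniformly with $u$ usc, so Theorem~\ref{Stabilitysub} gives that $u$ is a subsolution of \eqref{HJBOmi}-\eqref{HJBHE}.

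The third step is to verify the final condition: for fixed $x$ one has $u_n(T,x)=\vp_n(x)\to\vp(x)$ by the locally uniform convergence of $\vp_n$, while $u_n(T,x)\to u(T,x)$ by the locally uniform convergence of $u_n$, hence $u(T,\cdot)=\vp$. At this point $u$ is a continuous viscosity solution of \eqref{HJBOmi}-\eqref{HJBHE} in the sense of Definition~\ref{DEFBvs}. To conclude I would note that $u$, being a locally uniform limit, inherits the $\lambda_l$-superlinear growth from the uniform bounds on the $u_n$, and that a locally uniform limit of functions locally Lipschitz with a common constant is itself locally Lipschitz with that constant, so $u|_{[0,T]\times\Gamma}$ is locally Lipschitz; thus $u$ lies in the uniqueness class of Theorem~\ref{THMHEHGamma} and must coincide with the unique continuous solution of \eqref{HJBOmi}-\eqref{HJBHE}.

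The main obstacle --- everything else being essentially bookkeeping --- is the uniformity in $n$ of the regularity estimates for $u_n$: the proof of Theorem~\ref{Stabilitysub} picks a penalization constant $C>L_{u_n}+L_\phi$ that must be independent of $n$, and the final uniqueness step needs $u$ itself to be superlinear and locally Lipschitz along $[0,T]\times\Gamma$. Both rest on identifying $u_n$ with the value function of the $n$-th control problem and on the fact that the constants in Proposition~\ref{contv} and Proposition~\ref{lambdagr} are dictated solely by the uniform structural constants of $(f^n,\ell^n,\vp_n)$, so they pass to the limit unchanged.
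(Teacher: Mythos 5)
Your proof is correct and follows the same route as the paper: the paper's own argument is precisely the combination of Theorem \ref{Stabilitysuper}, Theorem \ref{Stabilitysub} and the passage to the limit in the final condition $u_n(T,\cdot)=\vp_n\to\vp$. The additional material you supply --- identifying $u_n$ with the value function of the $n$-th problem so as to get Lipschitz constants uniform in $n$, and checking that the limit $u$ lies in the uniqueness class of Theorem \ref{THMHEHGamma} before invoking the comparison principle --- is detail the paper leaves implicit (its proof of Theorem \ref{Stabilitysub} simply asserts that the $L_{u_n}$ are uniform in $n$), so your write-up fills in rather than departs from the published argument.
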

\begin{proof}
 By Theorem \ref{Stabilitysuper} and Theorem \ref{Stabilitysub}, $u$ is a supersolution and a subsolution of \eqref{HJBOmi}-\eqref{HJBHE}. Besides,  for any $x\in\R^d$,
 \[
 u(T,x)=\lim_{n\ra\infty} u_n(T,x)=\lim_{n\ra\infty} \vp_n(T,x)=\vp(T,x),
 \]
 i.e. $u$ satisfies the final condition. Thus, $u$ is the solution to \eqref{HJBOmi}-\eqref{HJBHE}.
\end{proof}

\appendix
\renewcommand\thesection{\Alph{section}}
\renewcommand\theequation{\Alph{section}.\arabic{equation}}
\section{Appendix A}
Let us start by the proof of Proposition \ref{PROPGM}.  
\begin{proof1}
\upshape
For $k \in \{1,\cdots, m+l\}$,  consider the subdomain $\M_k$, which in the following proof we denote by $\M$ for simplicity. We consider the following three cases according to the dimension of $\M$.

\smallskip

{\bf Case 1}: $\M_k=\Om_i$, $i\in\{1,\ldots,m\}$.

The claim simply follows by noting that in this case $\T_{\M_k}(x)=\R^d$,
and then $G_\M$ is locally Lipschitz continuous since $f(x,a)$ and $\ell(x,a)$ are locally Lipschitz continuous for each $a \in \As$.

\smallskip

{\bf Case 2}:   $\M\in \Gamma \setminus (\Gamma^0_k)_{k=1,\ldots q\times q}$. 

Note that the proof follows the main ideas of  \cite{RSZ}, Theorem $A.1$. Nevertheless, we give the proof for completeness and for a better understanding of {\bf Case $3$}.

 We want to show the existence of $\bar{L}>0$ such that for any $K$  compact of $\M$, $x,z \in K$ and $(f(x,a),q_1) \in G_\M(x), q_1 \leq -\ell(x,a)$ there exist a control $c$ and   $(f(z,c),q_2) \in G_\M(z), q_2 \leq -\ell(z,c)$ such that
\begin{equation}
\label{claimlipmult}
|f(x,a)-f(z,c)|+|q_1-q_2|\leq \bar{L}|x-z|,
\end{equation}
or, equivalently,
\[\label{lipG}
G_\M(x)\subset G_\M(z)+\bar{L}B(0,|x-z|).
\]
It is not restrictive to prove the above inequality for $|x-z|$ small, therefore since there are just a finite
number of connected components of $K$ intersecting $\Gamma$ and such components are at a positive distance apart, we can assume, without lose of generality, that $\Gamma$ is  connected.
We denote by $n$  the exterior normal vector to $\M$ as defined in subsection \eqref{secass}. Then, by the regularity of $\Gamma$ and since $K$ is connected,  $n$ is Lipschitz continuous.
To simplify the notations, we denote by $L$ the Lipschitz constant of  $f, \ell$ and $n$ in $K$. Moreover, we denote by $M$ a constant estimating form above $|n|$ in $K$ and
and $|f|, |\ell|$ in $K\times \As$.

Also, since $f(x,a) \in \T_\M(x)$ on $\M$ we have
\begin{equation}\label{fxtan}
f(x,a)\cdot n(x)=0.
\end{equation}
Since $f, \ell$ are locally Lipschitz we have
\begin{equation}\label{floclip}
|f(x,a)-f(z,a)|\leq L|x-z|,\,\, |\ell(x,a)-\ell(z,a)|\leq L|x-z|.\ 
\end{equation}
Note that, if $f(z,a)\cdot n(z)=0$, then $f(z,a)\in\mathcal{T}_{\M}(z)$ and by \eqref{fxtan} we deduce
\eqref{claimlipmult}
with $\bar{L}=2L$.
Suppose now that
\begin{equation}
\label{beta}
f(z,a)\cdot n(z):=-\beta<0.
\end{equation}
The controllability assumption {\bf (H2)} implies in particular that there exists $b\in \As$ such that
\begin{equation}\label{gamma}
f(z,b)\cdot n(z):=\gamma>0.
\end{equation}
Note  that by \eqref{fxtan} and \eqref{floclip}, we have
\[
\beta=|f(z,a)\cdot  n(z)|\leq (L+ML)|x-z|:=C|x-z|,
\]
and then
\begin{equation}\label{cxzbeta}
|x-z|\geq \frac{\beta}{C}.
\end{equation}
By the convexity assumption {\bf (HG)} for $G(z)$, there exists $c\in \As$, $\tilde q\in \R$ such that
\begin{equation}\label{conveq}
(f(z,c),\tilde q)=\frac{\gamma}{\beta+\gamma} \left(f(z,a),-\ell(z,a)\right)+\frac{\beta}{\beta+\gamma} (f(z,b),-\ell(z,b)),\, \,\, \tilde q\leq -\ell(z,c).
\end{equation}
Then, we show that \eqref{claimlipmult} holds for such $c \in \As$.
Indeed, by \eqref{conveq} and \eqref{cxzbeta}, we get
\begin{equation}\label{fzafzc}
|f(z,a)-f(z,c)|\leq 2M\frac{\beta}{\beta+\gamma}\leq \frac{2M}{\gamma}\beta\leq \frac{2MC}{\gamma}|x-z|,
\end{equation}
and by the first of \eqref{floclip} and \eqref{fzafzc} we conclude
\[
|f(x,a)-f(z,c)|\leq |f(x,a)-f(z,a)|+|f(z,a)-f(z,c)|\leq \left(L+\frac{2MC}{\gamma}\right)|x-z|.
\]
Moreover by \eqref{beta} and \eqref{gamma}, we have
\[
f(z,c)\cdot n(z)=\frac{\gamma}{\beta+\gamma} f(z,a)\cdot n(z)+\frac{\beta}{\beta+\gamma}f(z,b)\cdot n(z)=0,
\]
which implies that $f(z,c)\in \T_{\M}(z)$.

Observe that
\begin{eqnarray*}
	\tilde q &=& -\frac{\gamma}{\beta+\gamma}\ell(z,a)-\frac{\beta}{\beta+\gamma}\ell(z,b) \\
	&\geq& -\frac{\gamma}{\beta+\gamma}(\ell(x,a)+L|x-z|)-\frac{\beta}{\beta+\gamma}\ell(z,b) \\
	&\geq& -\ell(x,a)- L|x-z|+\frac{\beta}{\beta+\gamma}\left(\ell(x,a)-\ell(z,b)\right) \\
	&\geq& q_1-L|x-z|-2M\frac{\beta}{\gamma} \\
	&\geq& q_1-\left(L+\frac{2MC}{\gamma}\right)|x-z|,
\end{eqnarray*}
where we used the definition of $q_x$ and \eqref{cxzbeta}.
If we set $q_2:=q_1-(L+2MC/\gamma)|x-z|$, then we have
\[
q_2\leq \tilde q\leq -\ell(z,c)
\]
 and therefore
\[
(f(z,c),q_2)\in G_\M(z),\ |(f(x,a),q_1)-(f(z,c),q_2)|\leq (L+\frac{2MC}{\gamma})|x-z|.
\]
Then we conclude that
\[
G_\M(x)\subset G_\M(z)+(L+\frac{2MC}{\gamma})B(0,|x-z|).
\]
\smallskip

{\bf Case 3}: $\M\in (\Gamma^0_k)_k$. Denote for simplicity $\M=\Gamma^0$ and let $\Hy_{j_1}, \Hy_{j_2}$ be such that $\M=\Hy_{j_1}\cap \Hy_{j_2}$. 
  The proof essentially follows  by noting that $\T_{\Gamma_0}(\cdot)=\T_{\Hy_{j_1}}(\cdot)\cap\T_{\Hy_{j_2}}(\cdot)$ and by applying the same arguments used in {\bf Case $2$}. We just give a sketch of the main steps.
 
We want to show that, given a compact $K$ and $x,z \in K$, $(f(x,a),q_1) \in G_{\Gamma_0}(x), q_1 \leq -\ell(x,a)$, there exist a control $c \in \As$ and   $(f(z,c),q_2) \in G_{\Gamma_0}(z), q_2 \leq - \ell(z, c)$ such that
\begin{equation}
\label{claimlipmult2}
|f(x,a)-f(z,c)|+|q_1-q_2|\leq \bar{L}|x-z|.
\end{equation}
Note that the condition $f(z,c) \in \T_{\Gamma_0}$ now reads $f(z,c)  \in \T_{\Hy{j_1}}(z)\cap \T_{\Hy{j_2}}(z)$, that is $f(z,c) \cdot n_1(z)=0, f(z,c)\cdot n_2(z)=0,$ where $n_1(z),n_2(z)$ are the normal respectively to $\Hy_{j_1}$ and $\Hy_{j_2}$ as defined in subsection \ref{secass}. By {\bf Case 2} we can suppose that  $f(z,a) \cdot n_1(z)=0$ without loss of generality. Suppose that $f(z,a) \cdot n_2(z)=-\beta<0$ as in \eqref{beta}. Then we proceed analogously as in {\bf Case 1} using the controllability assumption ${\bf (H2)}$ on $\Hy_{j_2}$ and we find $b\in \As$ such that
$$
f(z,b)=n_2(z)\gamma, \, \, \gamma>0.
$$
Then, by the convexity of $G(z)$, we find a control $c \in \As$ such that 
$$
f(z,c)=\frac{\gamma}{\beta+\gamma}f(z,a)+\frac{\beta}{\beta+\gamma} f(z,b)
$$
and then we conclude
$$
f(z,c) \cdot n_2(z)=0, \, \, f(z,c) \cdot n_1(z)=0.
$$
The rest of the proof can be carried out exactly as in {\bf Case 1} and we omit the details.
Finally, not that the case $f(z,a) \cdot n_1(z)>0$ or $f(z,a) \cdot n_1(z)<0$ can be treated analogously.
\end{proof1}
Now we prove Proposition \ref{PROPH3}.
\begin{proofH3}\begin{normalfont}
We start by proving $(i)$.
  For each $j=0,\ldots,l$, if $F(x)\cap \mathcal T_{\Gamma_j}(x)=\emptyset$ for any $x\in\Gamma_j$, then $G_{\Gamma_j}(x)=\emptyset$ by definition. Otherwise if there exists $r_2>0$ such that for any $x\in\Gamma_j$, $B(0,r_2)\subset F(x)$, then $G_{\Gamma_j}$ is locally Lipschitz continuous on $\Gamma_j$ by Proposition \ref{PROPGM}.
  
 Now we proceed to prove $(ii)$. For each $j=0,\ldots,l$ and $x\in\Gamma_j$ with $F_{\Gamma_j}(x)\neq \emptyset$, $G_{\Gamma_j}$ is locally Lipschitz continuous on $\Gamma_j$ by the above arguments. And $F_{\Gamma_j}$ is locally Lipschitz continuous on $\Gamma_j$ as well. 
  For any $x'\in R(x;t)\cap\overline \Gamma_j$, there exists $y(\cdot)$ satisfying
 \[
 \dot y(s)\in F(y(s))\ \text{a.e.}\ s\in(0,t),\ y(0)=x\ \text{and}\ y(t)=x'.
 \]
 Let $r_0>0$ such that
 \[
 d(y(s),\Gamma_j)<r_0\ \text{for}\ s\in [0,t],\ t\leq \e_j,
 \]
 where $d(\cdot,\Gamma_j)$ is the distance function to $\Gamma_j$. We set 
 \[
 D:=\bigcup_{s\in[0,t]}B(y(s),r_0).
 \]
 Then there exists $M,L>0$ such that
 \[
 \|p\|\leq M,\ \forall\,p\in F(x),\ x\in D,
 \]
 and
 \[
 F_{\Gamma_j}(x_1)\subset F_{\Gamma_j}(x_2)+L\|x_1-x_2\|B(0,1),\ \forall\,x_1,x_2\in D,
 \]
 since $D$ is bounded. Here $F_{\Gamma_j}$ is extended to the domain $D$ by projection to $\Gamma_j$. Let $\e_j$ be small enough such that
 \[
 K\int^t_0 d(\dot y(s),F_{\Gamma_j}(y(s)))ds\leq r_0,
 \]
 where $K:=\exp(Lt)$. 
 By Filippov Existence Theorem \cite[Theorem 3.1.6]{C90}, there exists $z(\cdot)$ such that 
 \[
 \dot z(s)\in F_{\Gamma_j}(z(s))\ \text{a.e.}\ s\in (0,t),\ z(0)=x,
 \]
 and
 \[
 \|z(t)-y(t)\|\leq K\int^t_0 d(\dot y(s),F_{\Gamma_j}(y(s)))ds\leq 2KMt.
 \]
 Let $\tau=\frac{\|z(t)-y(t)\|}{r}$ where $r$ is given in {\bf (H3)}, we define
 \[
 \tilde z(s)=\left\{
 \begin{array}{ll}
  z(s) & \text{for}\ s\in [0,t],\\
  z(t)+\frac{y(t)-z(t)}{\tau}(s-t) & \text{for}\ s\in (t,t+\tau].
 \end{array}
 \right.
 \]
 Then the assumption {\bf (H3)} implies that 
 \[
\dot{ \tilde z}(s)\in F_{\Gamma_j}(\tilde z(s))\ \text{a.e.}\ s\in(0,t+\tau),\ \tilde z(0)=x\ \text{and}\ \tilde z(t+\tau)=y(t)=x'.
 \]
 Therefore, $x'\in R_j(x;t+\tau)$ with
 \[
 t\in [0,\e_j]\ \text{and}\ t+\tau\leq t+\frac{2KMt}{r}\leq \left(1+\frac{2KM}{r}\right)t.
 \]
 Consequently, we conclude the proof by setting $\Delta_j:=1+\frac{2KM}{r}$.
 \end{normalfont}
\end{proofH3}
Now we prove Proposition \ref{contv}.
\begin{proof2} 
\upshape
We split the proof into the following three steps. In {\bf Step $1$} we prove the local Lipschitz continuity on $[0,T]\times \Gamma$, in {\bf Step $2$} we prove the continuity on $[0,T] \times \Gamma$ and finally in {\bf Step $3$} we prove the continuity on $[0,T] \times \mathbb{R}^d$.
\begin{step}{\bf 1- Local Lipschitz continuity on $[0,T]\times \Gamma$.} 
\upshape

\begin{proof} 
	
	For any $t\in[0,T]$, we firstly prove that $v|_{[0,T]\times\Gamma}(t,\cdot)$ is locally Lipschitz continuous on $\Gamma$.  Let $x,z\in\Gamma$ and let $B$ be a ball containing $x,z$. 
	We consider two cases according to the positions of $x,z$, either  $x,z$ belongs to the same hyperplane {\bf Case $1$}, or not {\bf Case $2$}. In {\bf Case $1$} the proof relies strongly on the controllability assumption {\bf (H2)} and is quite standard (see \cite{RSZ}, Theorem $4.5$ (part $1$)). Then, the proof in {\bf Case $2$} is carried out by relying on the result of {\bf Case $1$} and using significantly the cellular structure of our decomposition of $\mathbb{R}^d$.
	\smallskip
	
	{\bf Case 1}: $x,z\in\Hy_j$, for some $j\in \{1,\ldots, q\}$.
	
	The super-optimality implies that for any $\e>0$ there exists $(\bar y,\bar \alpha)$ satisfying \eqref{DSy} with $\bar{y}(t)=z$ such that
	\begin{equation}\label{superoptv}
	v(t,z)\geq \vp(\bar y(T))+\int^T_t \ell(\bar y(s),\bar \alpha(s))ds -\e.
	\end{equation}
	We set
	\[
	h:=\frac{|x-z|}{r_1},\ \xi(s):=x+r_1\frac{z-x}{|z-x|}(s-t),\ \text{for}\ s\in[t,t+h],
	\]
	where $r_1>0$ is as in assumption {\bf (H2)}. 
	Note that $\xi$ is the segment joining $x$ with $z$ during the time interval $[t,t+h]$. Since $|\dot \xi|=r_1$ and $\xi(s)\in\Gamma$ for any $s\in[t,t+h]$, by {\bf (H2)}, there exists $\alpha\in\A$ such that
	\[
	\dot \xi(s)=f(\xi(s),\alpha(s)),\ \text{a.e.}\ s\in(t,t+h).
	\]
	We define
	\[
	\tilde y(s):=\left\{
	\begin{array}{ll}
	\xi(s) & \text{for}\ s\in[t,t+h],\\
	\bar y(s-h) & \text{for}\ s\in[t+h,T],
	\end{array}
	\right.
	\]
	and
	\[
	\tilde \alpha(s):=\left\{
	\begin{array}{ll}
	\alpha(s) & \text{for}\ s\in[t,t+h],\\
	\bar \alpha(s-h) & \text{for}\ s\in[t+h,T].
	\end{array}
	\right.
	\]
	Then $(\tilde y,\tilde \alpha)$ satisfies
	\[
	\dot{\tilde y}(s)=f(\tilde y(s),\tilde \alpha(s)),\ \text{a.e.}\ s\in(t,T),\ \tilde y(t)=x.
	\]
	Let $K$ be a compact containing the support of $\bar{y}(s)$ for $s\in [t,T]$ and denote by $M$ an upper bound for the cost and the dynamic on $K$.  Let $L_ \vp$ denote the Lipschitz constant of $\vp$ on $K$.
	By the sub-optimality of $v$, \eqref{superoptv}, the Lipschitz continuity of $\phi$, we conclude
	\begin{eqnarray*}
		v(t,x)-v(t,z)&\leq& \vp(\tilde y(T))+\int^T_t \ell(\tilde{y}(s),\tilde \alpha(s))ds-\vp(\bar y(T))-\int^T_t \ell(\bar y(s),\bar \alpha(s))ds+\e \\  &=&L_\vp|\bar{y}(T-h)-\bar{y}(T)|+\int^{t+h}_t \ell(\xi(s), \alpha(s))ds-\int^T_{T-h} \ell(\bar y(s),\bar \alpha(s))ds+\e \\
		&\leq& L_ \vp Mh+2Mh+\e=\frac{M(L_ \vp +2)}{r}|x-z|+\e\,\,\, \forall\e\geq 0.
	\end{eqnarray*}
	Then by the arbitrary choice of $\e$, 
	\[
	v(t,x)-v(t,z)\leq \frac{M(L_ \vp +2)}{r}|x-z|,
	\]
	and we conclude the local Lipschitz continuity of $v|_{[0,T]\times\Gamma}(t,\cdot)$ on each $\Hy_j$, for $j=1, \cdots, q$.
	
	\smallskip
	
	{\bf Case 2}: $x,z$ are not on the same hyperplane.
	
	Suppose without loss of generality that $x\in\Hy_{j_1}$, $z\in\Hy_{j_2}$ for some $j_1, j_2 \in \{1,\cdots q\}$. Then we have the following two cases:
\begin{itemize}
\item[(i)]$\Hy_{j_1}\cap \Hy_{j_2}= \emptyset$;
\item[(ii)] $\Hy_{j_1}\cap \Hy_{j_2}\neq\emptyset$. 
\end{itemize}
 We give the proof in case {\bf Case 2} (ii) since the proof in case {\bf Case 2} (i) follows from {\bf Case 1} and {\bf Case 2} (ii). We denote $\Gamma_0=\Hy_{j_1}\cap \Hy_{j_2}$. Consider the projections of $x,z$ on $\Gamma_0$: $\Proj_{\Gamma_0}(x)$ and $\Proj_{\Gamma_0}(z)$. Then we have
	\begin{eqnarray}\label{EqvLips}
	|v(t,x)-v(t,z)|
	&\leq& |v(t,x)-v(t,\Proj_{\Gamma_0}(x))|+|v(t,\Proj_{\Gamma_0}(x))-v(t,\Proj_{\Gamma_0}(z))|+|v(t,\Proj_{\Gamma_0}(z))-v(t,z)|\nonumber\\
	&\leq& \frac{L_ \vp (M+2)}{r}\left(|x-\Proj_{\Gamma_0}(x)|+|\Proj_{\Gamma_0}(x)-\Proj_{\Gamma_0}(z)|+|\Proj_{\Gamma_0}(z)-z|\right).
	\end{eqnarray}
	The last inequality holds because $x,\Proj_{\Gamma_0}(x)\in\Hy_{j_1}$ and $z,\Proj_{\Gamma_0}(z)\in\Hy_{j_2}$.
	
	Now we need to estimate the length of the polyline linking $x$, $\Proj_{\Gamma_0}(x)$, $\Proj_{\Gamma_0}(z)$ and $z$ by the length of the segment joining $x$ with $z$. Since
	\[
	\langle x-\Proj_{\Gamma_0}(x), \Proj_{\Gamma_0}(x)-\Proj_{\Gamma_0}(z)\rangle=0,\ 
	\langle z-\Proj_{\Gamma_0}(z), \Proj_{\Gamma_0}(x)-\Proj_{\Gamma_0}(z)\rangle=0,
	\]
	we have
	\begin{eqnarray*}
		|x-z|^2&=&|x-\Proj_{\Gamma_0}(x)+\Proj_{\Gamma_0}(x)-\Proj_{\Gamma_0}(z)+\Proj_{\Gamma_0}(z)-z|^2\\
		&=& |x-\Proj_{\Gamma_0}(x)|^2+|\Proj_{\Gamma_0}(x)-\Proj_{\Gamma_0}(z)|^2+|\Proj_{\Gamma_0}(z)-z|^2 +2\langle x-\Proj_{\Gamma_0}(x), \Proj_{\Gamma_0}(z)-z\rangle.
	\end{eqnarray*}
	Note that 
	\[
	\left|\langle x-\Proj_{\Gamma_0}(x), \Proj_{\Gamma_0}(z)-z\rangle\right|=|x-\Proj_{\Gamma_0}(x)| |z-\Proj_{\Gamma_0}(z)| |\langle \vec n_1,\vec n_2\rangle|=0,
	\]
	where $n_1, n_2$ are respectively the normal to $\Hy_{j_1}, \Hy_{j_2}$ passing through $x$ and $z$.

	Then
	\begin{eqnarray*}
		|x-z|^2&\geq& |x-\Proj_{\Gamma_0}(x)|^2+|\Proj_{\Gamma_0}(x)-\Proj_{\Gamma_0}(z)|^2+|\Proj_{\Gamma_0}(z)-z|^2 \\
		&\geq&\frac{1}{2}\left(|x-\Proj_{\Gamma_0}(x)|+|\Proj_{\Gamma_0}(x)-\Proj_{\Gamma_0}(z)|+|\Proj_{\Gamma_0}(z)-z|\right)^2.
	\end{eqnarray*}
	Together with \eqref{EqvLips}, it is obtained that
	\begin{equation}\label{loclip1}
	|v(t,x)-v(t,z)|\leq \frac{M(L_\vp+2)}{r}\sqrt{2}|x-z|.
	\end{equation}
	Now given $x\in\Gamma$, we proceed to prove the local Lipschitz continuity of $v|_{[0,T]\times\Gamma}(\cdot,x)$ on $[0,T]$.  For any $t_1,t_2\in [0,T]$, we assume without loss of generality that $t_1<t_2$. For any $\alpha\in\A$, let $y^{\alpha}_{t_2,x}$ be the solution of \eqref{DSy} with the initial condition $y^{\alpha}_{t_2,x}(t_2)=x$.  Denote  by $B$ a ball containing $x$ and let $K$ be a compact containing the support of $y^{\alpha}_{t_2,x}$ respectively in $[t_2,T]$. Let $M$ be an upper-bound for $\ell$ in $K$.  By {\bf (H2)}, let $a\in \As$ such that $f(x,a)=0$. We set
        \[
        \alpha_1(s)=\left\{
        \begin{array}{ll}
        a & \text{for}\ s\in [t_1,t_2),\\
        \alpha(s) & \text{for}\ s\in [t_2,T].
        \end{array}
        \right.
        \]
        Let $y^{\alpha_1}_{t_1,x}$ be the solution of \eqref{DSy} with the initial data $(t_1,x)$ and the control $\alpha_1$. Then we have

        \[
        y^{\alpha_1}_{t_1,x}=\left\{
        \begin{array}{ll}
        x & \text{for}\ s\in[t_1,t_2),\\
        y^{\alpha}_{t_2,x} & \text{for}\ s\in [t_2,T].
        \end{array}
        \right.
        \]
        Therefore
        \begin{eqnarray*}
        &&\left|\vp(y^{\alpha_1}_{t_1,x}(T))+\int^T_{t_1} \ell(y^{\alpha_1}_{t_1,x}(s),\alpha_1(s))ds -\vp(y^{\alpha}_{t_2,x}(T))-\int^T_{t_2} \ell(y^{\alpha}_{t_2,x}(s),\alpha(s))ds\right|\\
        &\leq & \left|\int^{t_2}_{t_1} \ell(x,a)ds\right| \leq M(t_2-t_1).
        \end{eqnarray*}
\end{proof}
\end{step}
\begin{step}{\bf 2- Continuity on $[0,T]\times\Gamma$.}

\smallskip
\upshape
In order to show the continuity of the value function on $[0,T]\times\Gamma$, we need the following lemma on the behavior of controlled dynamics. We refer also to \cite[Lemma 4.3]{RSZ} for an analogous result in the setting of a two-domain partitions of $\mathbb{R}^d$ and control problems with bounded cost and dynamic. The proof is postponed at the end of the proof of Proposition \ref{contv}.

\begin{lmm}\label{LEMcont}
 Assume {\bf(H1)}, {\bf (HF)}, {\bf (HL)}, {\bf (HG)} , {\bf(H2)(ii)}. Let $t \in [0,T], x \in\Gamma$ and $\{x_n\}$ be a sequence such that  $x_n\in \Omega_i$ for some $i \in \{1,\ldots, m\}$ and $x_n \to x$ as $n \to + \infty$. Then for $n$ large enough  there exists two trajectories $\overline y_n,\ \underline y_n$ driven by $F$ and $\overline h_n \to 0,\ \underline h_n \to 0$ as $n \to + \infty$  such that 
 \[
 \overline y_n(t)=x_n,\ \overline y_n(t +\overline h_n)\in\Gamma,\ \overline y_n([t,t +\overline h_n))\subset \Om_i,
 \]
 \[
 \underline y_n(t)\in\Gamma,\ \underline y_n(t +\underline h_n)=x_n,\ \underline y_n((t, t +\underline h_n])\subset \Om_i.
 \]
\end{lmm}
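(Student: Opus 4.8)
The plan is to build $\overline y_n$ and $\underline y_n$ \emph{explicitly}, exploiting the controllability \textbf{(H2)} near $\Gamma$ together with the rigidity of the decomposition coming from \textbf{(H1)} (every interface hyperplane is parallel or orthogonal to every other). \textbf{Local model.} First I would fix $\rho>0$ so small that the only hyperplanes meeting $B(x,\rho)$ are those passing through $x$; by \textbf{(H1)} these are mutually orthogonal, so after an orthonormal change of coordinates centred at $x$ one has $\Gamma\cap B(x,\rho)=\bigcup_{j\in J}\{y_j=0\}$ for a finite set $J$, and — using that $\Om_i$ is open, connected and disjoint from $\Gamma$, and that each full hyperplane separates its two sides globally — one gets $\Om_i\cap B(x,\rho)=\{y:y_j>0\ \forall j\in J\}\cap B(x,\rho)$ after relabelling orientations. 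For $n$ large $x_n$ lies in this orthant and $\delta_n:=d(x_n,\Gamma)=\min_{j\in J}(x_n)_j>0$, with $\delta_n\le|x_n-x|\to0$; fix $j_0\in J$ attaining the minimum and let $e=e_{j_0}$ be the unit inner normal of the face $\{y_{j_0}=0\}\subset\Gamma$.

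\textbf{Transporting the controllability.} Since \textbf{(HG)} makes $F(\cdot)$ (the projection of $G(\cdot)$ onto $\R^d$) convex-valued and $F_i=F|_{\overline{\Om_i}}$ is locally Lipschitz up to $\Gamma$ by \textbf{(HF)(ii)}, I would show that \textbf{(H2)} spreads into $\Om_i$: after shrinking $\rho$, $B(0,r_1/2)\subset F(y)$ for every $y\in\overline{\Om_i}\cap B(x,\rho)$. Indeed, writing $z$ for the point of $\Gamma\cap\overline{\Om_i}$ obtained from $y$ by zeroing its smallest among the coordinates $(y_j)_{j\in J}$, one has $|y-z|=d(y,\Gamma)$, $B(0,r_1)\subset F(z)$ by \textbf{(H2)}, and $d_H(F(y),F(z))\le L|y-z|$; a separation argument then yields $B(0,r_1-L\,d(y,\Gamma))\subset F(y)$ because $F(y)$ is convex.

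\textbf{The two arcs.} Set $h_n:=2\delta_n/r_1\to0$ and let $z_n$ be $x_n$ with its $j_0$-th coordinate replaced by $0$, so $z_n\in\Gamma\cap\overline{\Om_i}$ and $x_n-z_n=\delta_n e$. Put
\[
\overline y_n(t+s):=x_n-\tfrac{r_1}{2}se,\qquad \underline y_n(t+s):=z_n+\tfrac{r_1}{2}se,\qquad s\in[0,h_n]
\]
(extended arbitrarily afterwards). Both arcs move a distance $\le\delta_n$, hence stay in $\overline{\Om_i}\cap B(x,\rho)$, and their constant velocities $\mp\frac{r_1}{2}e$ lie in $B(0,r_1/2)\subset F(\cdot)$ along the way by the previous step, so each solves $\dot y\in F(y)$. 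Reading off coordinates, $(\overline y_n(t+s))_{j_0}=\delta_n-\frac{r_1}{2}s$ stays positive for $s<h_n$ and vanishes at $s=h_n$, while $(\overline y_n(t+s))_j=(x_n)_j\ge\delta_n>0$ for the other $j\in J$; hence $\overline y_n(t)=x_n$, $\overline y_n([t,t+h_n))\subset\Om_i$ and $\overline y_n(t+h_n)\in\Hy_{j_0}\subset\Gamma$. Symmetrically $(\underline y_n(t+s))_{j_0}=\frac{r_1}{2}s>0$ for $s>0$ with the remaining coordinates equal to $(x_n)_j>0$, so $\underline y_n(t)=z_n\in\Gamma$, $\underline y_n((t,t+h_n])\subset\Om_i$ and $\underline y_n(t+h_n)=z_n+\delta_n e=x_n$. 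Taking $\overline h_n=\underline h_n=h_n\to0$ proves the lemma.

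\textbf{Where the difficulty sits.} The two genuinely non-routine points are the local-model step — justifying that the cellular structure forces $\Om_i$ to coincide near $x$ with a single coordinate orthant (in particular that $x_n$ belongs to one and the same orthant for all large $n$) — and the transport step, moving the controllability ball from $\Gamma$ into the open cell; the decision to aim toward the \emph{nearest} face $\{y_{j_0}=0\}$ is precisely what keeps the arcs from escaping $\Om_i$ through another face before they reach $\Gamma$. If one prefers to bypass the convexity argument, the same two arcs can instead be obtained by applying Filippov's theorem (as in \cite{RSZ}) to the straight reference segment joining $x_n$ and $z_n$, whose velocity is a genuine element of $F$ on $\Gamma$; the Filippov error is $O(\delta_n^2)=o(\delta_n)$, so it is absorbed and the hitting time of $\Gamma$ is located by the intermediate value theorem applied to the $j_0$-th coordinate.
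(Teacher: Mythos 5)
Your proof is correct, but it follows a genuinely different route from the paper's. The paper does not build the arcs by hand: it invokes Lemma 4.3 of \cite{RSZ} (applied to the singleton $\K=\{x\}$, the hyperplane $\Hy_j$ and the cell $\Om_i$) to obtain trajectories joining $x_n$ to $\Hy_j$ and back in time at most $S\,g_j(x_n)$, where $g_j$ is the signed distance to $\Hy_j$, and then repairs the fact that these arcs need not avoid $\Gamma$ beforehand by truncating at the first hitting time of $\Gamma$ (respectively reparametrizing from the last exit time); the case $x\in\Hy_{j_1}\cap\Hy_{j_2}$ is handled as a separate final case via $\min\{g_{j_1}(x_n),g_{j_2}(x_n)\}$. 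You instead make the construction self-contained: the local orthant model extracted from {\bf (H1)}, the quantitative propagation $B(0,r_1-L\,d(y,\Gamma))\subset F(y)$ of the controllability ball into $\overline{\Om_i}$ (which correctly uses that $F(y)$ is convex, being the projection of the convex set $G(y)$ from {\bf (HG)}, together with the Lipschitz continuity of $F_i$ up to $\Gamma$ from {\bf (HF)}(ii)), and straight segments aimed at the nearest face. What your version buys is an explicit time $\overline h_n=\underline h_n=2\,d(x_n,\Gamma)/r_1$, no truncation step (aiming at the nearest face keeps the arcs inside $\Om_i$ by construction), and a uniform treatment of points $x$ lying on several hyperplanes; what the paper's version buys is brevity and independence from the convexity of $F$, since the normal-controllability construction of \cite{RSZ} is used as a black box. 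Two small points you should make explicit: your constant-velocity arcs solve the differential inclusion $\dot y\in F(y)$, so a standard measurable selection (Filippov's lemma) is needed to produce the controls $\overline\alpha_n,\underline\alpha_n$ that the subsequent continuity argument actually uses; and the ``arbitrary'' extension beyond $t+h_n$ must itself be a trajectory of the system for $\overline y_n,\underline y_n$ to qualify as trajectories driven by $F$ on the whole interval.
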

Now we prove that $v$ is continuous at any point of $[0,T]\times\Gamma$.

\begin{proof}
 Taking into account that $v$, restricted on $\Gamma$, is continuous, it is enough to prove that for any $t\in[0,T]$, $x\in\Gamma$,
 \[
 v(t_n, x_n)\ra v(t,x),\ \text{for any}\ t_n\ra t,\ x_n\ra x,\ t_n\in[0,T],\ x_n\in\Om_i,\ i\in \{1,\ldots,m\}.
 \]
 By applying Lemma \ref{LEMcont}, for $n$ large enough there exist , $\overline h_n,\underline h_n$ and $\overline y_n,\ \underline y_n$ driven by $F$ such that
 \[
 \overline y_n(t)=x_n,\ \overline y_n(t+\overline h_n)\in\Gamma,\ \overline y_n([t,t +\overline h_n))\subset\Om_i,
 \]
 \[
 \underline y_n(t)\in\Gamma,\ \underline y_n(t +\underline h_n)=x_n,\ \underline y_n((t,t +\underline h_n])\subset \Om_i.
 \]
 Note that $\overline h_n,\underline h_n\ra 0$ and by the local boundedness of $f$ there exists some constant $M$ such that
 \[
 |\overline y_n(t +\overline h_n)-x|\leq |\overline y_n(t +\overline h_n)-x_n|+|x_n-x|\leq M\overline h_n+|x_n-x|,
 \]
 which implies $\overline y_n(t +\overline h_n)\ra x$. By the same arguments, $\underline y_n(t)\ra x$.
 
 Let $\overline \alpha_n,\underline \alpha_n$ be the corresponding controls for $\overline y_n,\underline y_n$. By the sub-optimality satisfied by $v$, we have
 \[
 v(t_n,x_n)\leq v(t+\overline h_n,\overline y_n(t+\overline h_n))+\int^{t+\overline h_n}_{t_n}\ell(\overline y_n(s),\overline \alpha_n(s))ds\leq v(t+\overline h_n,\overline y_n(t+\overline h_n))+\overline M_n(\overline h_n+t-t_n),
 \]

 \[
 v(t_n-\underline h_n,\underline y_n(t))\leq v(t_n,x_n)+\int^{t_n}_{t_n-\underline h_n}\ell(\underline y_n(s),\underline \alpha_n(s))ds\leq v(t_n,x_n)+\underline M_n\underline h_n.
 \]
 where $\overline M_n=\max_{s\in(t_n,t_n+\overline h_n)}l(\overline y_n(s))$ and $\underline M_n=\max_{s\in(t_n-\underline h_n,t_n)}l(\underline y_n(s))$.
For any $s \in (t_n, t_n + \overline h_n)$ we estimate the cost by the Gronwall lemma and we get
 $$
l(\overline y_n(s))\leq c_l(1+|\overline y_n(s)|)^{\lambda_l}e^{c_f\lambda_ls}.
 $$
Then, since $\overline y_n(s)$ is uniformly bounded in $n$ for $s \in (t_n, t_n + \overline h_n)$ and for large $n$, we get
 $$
 \overline M_n \overline h_n \to 0, \quad  \underline M_n \underline h_n \to 0,  \quad \mbox{as } n \to + \infty.
 $$

 Putting $n\ra+\infty$ and by the continuity of $v|_{[0,T]\times\Gamma}$, we derive
 \[
 \mathop{\lim\,\sup}_{t_n\ra t,\,x_n\ra x}v(t_n,x_n)\leq v(t,x),\ v(t,x)\leq \mathop{\lim\,\inf}_{t_n\ra t,\,x_n\ra x}v(t_n,x_n),
 \]
 which shows the assertion.
\end{proof}
\end{step}
\begin{step}{\bf 3-Continuity in $[0,T]\times\R^d$.}
\upshape
\begin{proof}
The proof follows similar arguments to \cite{RSZ}, Theorem $4.5$ (part $3$) and essentially extends the result to the case of  unbounded cost and dynamic. 
 We consider a bounded subset $B$ of $\Omega_i$. We prove that, given $t\in[0,T]$, for any $\e>0$, there exists $\delta>0$ such that
 \begin{equation}\label{claimcont}
 |v(t,z)-v(t,x)|<\e,\ \text{for any}\ x,z \in B \mbox{ such that } |x-z|< \delta.
 \end{equation}
We denote by $K$ a compact set containing the support of any integral curve
of $F$, starting at $B$, and defined in $[0, T]$.
 In the following, we denote by $L$ and $L_\phi$ the Lipschitz constant respectively of $f_i$,$\ell_i$ in $K\times \As$  and of $\phi$ in $K$. \\
 By the super-optimality of $v$, there exists $(\bar y_x,\bar \alpha)$ satisfying \eqref{DSy} with $\bar{y}_x(t)=x$ such that
 \begin{equation}\label{Vconteq1}
 v(t,x)\geq v(t+h,\bar y_x(t+h))+\int^{t+h}_t\ell(\bar y_x(s),\bar \alpha(s))ds-\frac{\epsilon}{2},\ \forall\,h\geq 0.
 \end{equation}
 Let $\bar y_z$ be the solution of \eqref{DSy} with the control $\bar \alpha$ and the initial condition $\bar y_z(t)=z$. 
 By the sub-optimality of $v$,
 \begin{equation}\label{Vconteq2}
 v(t,z)\leq v(t+h,\bar y_z(t+h))+\int^{t+h}_t\ell(\bar y_z(s),\bar \alpha(s))ds,\ \forall\,h\geq 0.
 \end{equation}
 Now define
 \[
 \tilde T:=\inf\{s\,:\, \bar y_x(s)\not\in \Om_i\ \text{or}\ \bar y_z(s)\not\in \Om_i,\ s\in[t,T]\}.
 \]
 If $\tilde T= T$, then $\bar y_x$ and $\bar y_z$ stay in $\Om_i$ during $(t,T)$. Thus, $\bar y_x$ and $\bar y_z$ are always driven by $F_i$ and
 the Gronwall lemma implies that
 \[
 |\bar y_x(s)-\bar y_z(s)|\leq e^{Ls}|x-z|,\ \forall\,s\in[t,T],
 \]
 where $L$ depends on the compact $K$ where the support of $y$ lies.
 Then by taking $h=T-t$ in \eqref{Vconteq1} and \eqref{Vconteq2} we obtain
 \begin{eqnarray*}
 &&v(t,z)-v(t,x)\\
 &\leq& \vp(\bar y_z(T))+\int^T_t \ell_i(\bar y_z(s),\bar \alpha(s))ds -\vp(\bar y_x(T))-\int^T_t \ell_i(\bar y_x(s),\bar \alpha(s))ds+\frac{\e}{2}\\
 &\leq& (L_ \vp+TL)e^{LT}|x-z|+\frac{\e}{2}.
 \end{eqnarray*}
 The assertion holds true by taking any $\delta>0$ with $\delta\leq\frac{e^{-LT}}{2(L_\vp+TL)}\e$.
 
 \smallskip
 Otherwise, if $\tilde T<T$, we still have
 \[
 |\bar y_x(s)-\bar y_z(s)|\leq e^{Ls}|x-z|,\ \forall\,s\in[t,\tilde T].
 \]
 By taking $h=\tilde T-t$ in \eqref{Vconteq1} and \eqref{Vconteq2} we obtain
 \begin{eqnarray}\label{vzmenovx}
  &&v(t,z)-v(t,x) \nonumber\\
 &\leq& v(\tilde T,\bar y_z(\tilde T))+\int^{\tilde T}_t \ell_i(\bar y_z(s),\bar \alpha(s))ds -v(\tilde T,\bar y_x(\tilde T))-\int^{\tilde T}_t \ell_i(\bar y_x(s),\bar \alpha(s))ds+\frac{\e}{2}\nonumber\\
 &\leq& v(\tilde T,\bar y_z(\tilde T))-v(\tilde T,\bar y_x(\tilde T))+TLe^{LT}|x-z|.
 \end{eqnarray}
 Note that $\tilde T<T$ implies that $\bar y_x(\tilde T)\in\Gamma$ or $\bar y_z(\tilde T)\in\Gamma$. Without loss of generality suppose that $\bar y_x(\tilde T)\in\Gamma$. By the continuity of $v(\tilde T,\cdot)$ on $\Gamma$, there exists $\delta_1>0$ such that
 \begin{equation}\label{contgamma}
 |v(\tilde T,x')-v(\tilde T,\bar y_x(\tilde T))|<\frac{\e}{2},\ \forall\,x'\in\R^d,\ |x'-\bar y_x(\tilde T)|<\delta_1.
 \end{equation}
Take $\delta>0$ such that
 \[
 \delta<\min\{e^{-L\tilde{T}}\delta_1,\frac{e^{-LT}}{2TL}\e\}.
 \]
 Then, for any  $|x-z|<\delta$, we have
 \begin{equation}\label{deltauno}
 |\bar y_x(\tilde T)-\bar y_z(\tilde T)|<\delta_1, 
 \end{equation}
 and
 \begin{equation}
 \label{xmenozpiccolo}
TLe^{LT}|x-z|<\frac{\e}{2}.
 \end{equation}
Then, by \eqref{deltauno} and \eqref{contgamma} with $x'=\bar{y}_z(\tilde{T})$, we get
 \begin{equation}\label{contgammafine}
 v(\tilde T,\bar y_z(\tilde T))-v(\tilde T,\bar y_x(\tilde T))<\frac{\e}{2}
 \end{equation}
 and the claim \eqref{claimcont} follows by coupling \eqref{xmenozpiccolo}, \eqref{contgammafine} and \eqref{vzmenovx}.
\end{proof}
\end{step}
\end{proof2}
Finally we prove Lemma \ref{LEMcont}.
\begin{proof3}
\upshape
Without loss of generality, we suppose that   $x \in \Hy_j$ for just one $j\in\{1,\ldots,q\}$. Since $x_n \to x$, we have that for $n$ large enough, $x_n\in\Om_i$ such that $\bar{\Omega}_i \cap \Hy_j \neq \emptyset$. Let $n\geq 0$ be fixed.

Define $g_j:\R^d\ra\R$ as
\[
g_j(x):=\left\{
\begin{array}{ll}
d_{\Hy_j}(x) & \text{if}\ (x-\Proj_{\Hy_j}(x))\cdot \vec n_j\geq 0,\\
-d_{\Hy_j}(x) & \text{otherwise},
\end{array}
\right.
\]
where $n_j$ denotes the normal vector to each $\Hy_j$ as defined in subsection \ref{secass}.
 By applying \cite[Lemma 4.3]{RSZ} for  $\K=\{x\}$ of $\Hy_j$ and $\Om_i$, there exists  $S>0$ such that   for any $n$ large enough, there exists two trajectories $\overline y_n,\ \underline z_n$ driven by $F$ and $\overline t_n,\ \underline t_n$ less than $Sg_j(x_n)$ with
 \[
 \overline y_n(t)=x_n,\ \overline y_n(t+\overline t_n)\in\Hy_j,
 \]
 \[
 \underline z_n(t)\in\Hy_j,\ \underline z_n(t+\underline t_n)=x_n.
 \]
 Note that $\overline t_n, \underline t_n \to 0$ as $n \to + \infty$. 
 For $\overline y_n$, we take $\overline h_n:=\min\{s\,:\overline y_n(s)\in\Gamma,\ s\in[t,t+\overline t_n]\}$, then we have
 \[
 \overline y_n(t)=x_n,\ \overline y_n(t+\overline h_n)\in\Gamma,\ \overline y_n([t,t+\overline h_n))\subset \Om_i.
 \]
 For $\underline z_n$, we take $\tau_n:=\sup\{s\,:\,\underline z_n(s)\not\in\Om_j,\ s\in[t,t+\underline t_n]\}$, $\underline h_n=\underline t_n-\tau_n$ and $\underline y_n(\cdot)=\underline z_n(\cdot-\tau_n)$, then we have
 \[
 \underline y_n(t)\in\Gamma,\ \underline y_n(t+\underline h_n)=x_n,\ \underline y_n((t,t+\underline h_n])\subset \Om_i.
 \]
 and the claim follows by noting that $\underline h_n\leq \underline \tau_n, \overline h_n \leq \overline \tau_n$ and then $\underline h_n, \overline h_n \to 0$ as $n \to + \infty$. 

 Finally, if $x \in \Hy_{j_1}\cap\Hy_{j_2}$ for some $j_1, j_2 \in \{1, \ldots q\}$, there exist the desired $ S,\ \overline y_n,\ \underline y_n,\ \overline h_n,\ \underline h_n$ with $\overline h_n,\ \underline h_n\leq S\min\{g_{j_1}(x_n),g_{j_2}(x_n)\}$.
\end{proof3}

\section{Appendix B}
\subsection{Some background in non smooth analysis: trajectories and invariance}
We recall here some fundamental results which we need in the characterization of the sub-optimality.
The first proposition states the existence of smooth trajectories for a given initial data,
namely, initial point and initial velocity.
The proof is analogous to the proof of Proposition $4.1$ of \cite{HZ} and we omit it.
\begin{lmm}\label{trajectories}
	Assume {\bf (H1)}, {\bf(HF)}, {\bf(HL)}, {\bf(HG)}, {\bf (H3)}. Then, for any $k\in \{0,\cdots m+l\}$
	such that $A_{\M_k}$ has nonempty images, for every $(t,x) \in [0,T] \times \M_k$ and any $a \in A_{\M_k}(x)$ there exist $\tau>0$, a measurable control map $\alpha \,: \,(t-\tau,t+\tau] \to \mathcal{A}$, a measurable function $r \, : \, (t-\tau, t+\tau]\to [0,+\infty)$ and  $(y(\cdot),\eta(\cdot))\in C^1((t-\tau,t+\tau]),\, y(s) \in \M_k$ for any $s \in (t-\tau, t+\tau]$, such that
	\[
	\dot y(s)=f(y(s),\alpha(s)),\ \dot \eta(s)=-\ell(y(s),\alpha(s))-r(s)
	\]
	and
	\[
	y(t)=x,\ \dot y(t)=f(x,a), \ \eta(t)=0, \ \dot \eta(t)=l(x,a).
	\]
\end{lmm}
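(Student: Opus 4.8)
The plan is to realize the arc $(y(\cdot),\eta(\cdot))$ as a $C^1$ solution of an ordinary differential equation whose right-hand side is a continuous selection of the augmented tangential dynamics $G_{\M_k}$ that passes through the prescribed initial velocity, and then to recover the control $\alpha$ and the slack map $r$ from this velocity field by a measurable selection argument. First I would record the regularity available: since $A_{\M_k}$ has nonempty images, $F_{\M_k}(x)=F(x)\cap\T_{\M_k}(x)\neq\emptyset$, so the empty-image alternative of {\bf (H3)} is excluded on the relevant stratum; hence $G_{\M_k}$ is locally Lipschitz continuous on $\M_k$ --- by {\bf (HF)(ii)}--{\bf (HL)(ii)} if $\M_k=\Om_i$, and by Proposition \ref{PROPH3}(i) if $\M_k$ is an interface component. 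Its images $G_{\M_k}(x)=G(x)\cap(\T_{\M_k}(x)\times\R)$ are moreover nonempty, compact and convex (the last by {\bf (HG)} and the fact that $\T_{\M_k}(x)\times\R$ is a linear subspace), and are uniformly bounded on bounded subsets of $\M_k$ by {\bf (HF)(iii)}, {\bf (HL)(iii)}. Recall finally that $\M_k$ is relatively open in an affine subspace $V_k$ whose direction is the constant subspace $\T_{\M_k}$. Now fix $(t,x)\in[0,T]\times\M_k$ and $a\in A_{\M_k}(x)$ and set $\xi_0:=(f(x,a),-\ell(x,a))\in G_{\M_k}(x)$.

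Next I would build the selection and the arc. For $y\in\M_k$ let $g(y):=\mathrm{proj}_{G_{\M_k}(y)}(\xi_0)$ be the metric projection of $\xi_0$ onto the closed convex set $G_{\M_k}(y)$; this is single-valued, and a standard subsequence argument --- using that $y\mapsto G_{\M_k}(y)$ is Hausdorff-continuous with convex values uniformly bounded near $x$ --- shows that $g$ is continuous on a neighbourhood $U\subset\M_k$ of $x$, with $g(x)=\xi_0$ because $\xi_0\in G_{\M_k}(x)$. Write $g=(g_1,g_2)$, so that $g_1(y)\in\T_{\M_k}$ for all $y$. By Peano's theorem there are $\tau>0$ and $y\in C^1([t-\tau,t+\tau])$ with $\dot y(s)=g_1(y(s))$ and $y(t)=x$; shrinking $\tau$ keeps $y(s)\in U$, and since $\dot y(s)\in\T_{\M_k}$ for every $s$, the arc $y$ stays in $V_k$, hence in the relatively open set $\M_k$ for $\tau$ small. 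Setting $\eta(s):=\int_t^s g_2(y(\sigma))\,d\sigma$ gives $\eta\in C^1$ with $\dot\eta(s)=g_2(y(s))$, $\eta(t)=0$ and $(\dot y(t),\dot\eta(t))=g(x)=\xi_0=(f(x,a),-\ell(x,a))$, so the required data at $s=t$ hold.

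It remains to disaggregate $g$ into a control and a slack map. For every $s$ one has $(\dot y(s),\dot\eta(s))=g(y(s))\in G_{\M_k}(y(s))$, so by the definition of $G_{\M_k}$ there exist $\tilde a\in\As$ and $\rho\in[0,b(y(s),\tilde a)]$ with $f(y(s),\tilde a)\in\T_{\M_k}$ and $(\dot y(s),\dot\eta(s))=(f(y(s),\tilde a),-\ell(y(s),\tilde a)-\rho)$. Since $s\mapsto(y(s),\dot y(s),\dot\eta(s))$ is continuous and the admissible pairs $(\tilde a,\rho)$ realizing a given value form the preimage of a continuous map, Filippov's measurable selection lemma provides measurable $\alpha:(t-\tau,t+\tau]\to\As$ and $r:(t-\tau,t+\tau]\to[0,+\infty)$ with $\dot y(s)=f(y(s),\alpha(s))$ and $\dot\eta(s)=-\ell(y(s),\alpha(s))-r(s)$; at $s=t$ the identity $(\dot y(t),\dot\eta(t))=(f(x,a),-\ell(x,a))$ forces $r(t)=0$, which completes the argument.

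I expect the delicate point to be exactly the construction of a $C^1$ arc realizing the prescribed initial velocity while remaining inside $\M_k$: the mere upper semicontinuity and convexity of $G$ yield only Lipschitz --- not $C^1$ --- solutions of the inclusion, with no control over the initial velocity, so the proof genuinely relies on the local Lipschitz (or at least continuous) dependence of $G_{\M_k}$ supplied by Proposition \ref{PROPH3}, on the convexity of its images (used to obtain the through-the-point selection by metric projection), and on Filippov's lemma to pass back to a measurable control; the flatness of $\M_k$ is what converts tangency of the selected velocities into invariance of $\M_k$.
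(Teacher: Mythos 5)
Your argument is correct and is essentially the proof the paper omits: it defers to Proposition 4.1 of \cite{HZ}, which follows the same scheme of taking a continuous selection of the locally Lipschitz, convex-compact-valued tangential dynamics $G_{\M_k}$ passing through the prescribed velocity $(f(x,a),-\ell(x,a))$, solving the resulting ODE by Peano inside the flat stratum (where tangency of the selected velocities keeps the arc in $\M_k$), and invoking Filippov's measurable selection lemma to recover $\alpha$ and $r$. The only cosmetic slip is the closing remark that $r(t)=0$ is ``forced'': it is neither forced (a different control could realize the same velocity pair with positive slack) nor needed, since the lemma prescribes only $\dot y(t)$ and $\dot\eta(t)$, which your $C^1$ selection already fixes.
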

We recall the notion of  proximal subgradient,  proximal normal cone and its relation with the proximal subgradients. We refer to \cite{CLSW} for more details.\\
\begin{dfntn}
	Let $\omega\, : \, \R^d \rightarrow \R \cup \{+\infty\}$ be a given l.s.c function. A viscosity subgradient $\eta \in \R^d$ of $\omega$ at $x \in \mbox{ dom } \omega$ is called a proximal subgradient of $\omega$ at $x$ if  for some $\sigma >0$ the test function $g\, : \, \R^d \rightarrow \R$ can be taken as
	$$
	g(y):=<\zeta, y-x>-\sigma |y-x|^2, \quad \forall y \in \R^d.
	$$
	We denote the set of all proximal subgradients at $x$ by $\partial_P \omega(x)$.
\end{dfntn}
Let $\mathcal{B}\subseteq \R^d$ be a locally  closed set. For any $x \in \mathcal{B}$ a vector $\eta \in \R^d$ is called proximal normal to $\mathcal{B}$ at $x$ if there exists $\sigma=\sigma(x,\eta) >0$ so that
$$
\frac{|\eta|}{2\sigma}|x-y|^2\geq <\eta, y-x>\quad \forall y \in \mathcal{B}.
$$
The \textit{Proximal normal cone} to $\mathcal{B}$ at $x$ is the set of all such vectors $\eta$. We denote it by $\mathcal{N}_\mathcal{B}^P(x)$.  \\
When $\mathcal{B}=\Ep(\omega)$ where $\omega \, : \, \R^d \rightarrow \R \cup \{+\infty\}$ is a l.s.c function, then for each $x \in \mbox{ dom } \omega$, the following relation holds:
\begin{equation}\label{relation}
\xi \in \partial_P\omega(x) \Longleftrightarrow (\xi, -1) \subseteq \mathcal{N}_\mathcal{B}^P(x,\omega(x)), \quad \forall x \in \mbox{ dom } \omega.
\end{equation}
Finally, we present a useful criterion for strong invariance adapted to smooth manifolds. For the proof we refer to \cite{HZ} Proposition $4.2$ or \cite{HZW} Lemma $3.4$.
\begin{lmm}\label{strinv}
	Suppose $M \subseteq \R^d$ is locally closed, $\mathcal{B} \subseteq \R^d$ is closed with $\mathcal{B} \cap \bar{M} \neq \emptyset$ and $\Gamma\, : \, \bar{M} \rightsquigarrow \R^d$ is locally Lipschitz and locally bounded. 
	
	Let $r>0$ and assume that there exists $c=c(r)>0$ such that
	$$
	\sup_{\nu \in \Gamma(x)} <x-s, \nu> \leq c \mbox{ dist }_{\mathcal{B}\cap \bar{M}}(x)^2, \quad \forall x \in M \cap B_r, \, \forall s \in \mbox{ proj }_{\mathcal{B}\cap \bar{M}}(x).
	$$
	Then for any absolutely continuous arc $\gamma\, : \, [0,T] \rightarrow \bar{M}$ that satisfies
	$$
	\dot{\gamma} \in \Gamma(\gamma) \quad \mbox{ a.e. on } [0,T] \quad \mbox{ and } \gamma(t) \in M \cap B_r \quad \forall t \in (0,T),
	$$
	the following estimate holds true
	$$
	\mbox{ dist }_{\mathcal{B}\cap \bar{M}}(\gamma(t))\leq e^{ct}\mbox{ dist }_{\mathcal{B}\cap \bar{M}}(\gamma(0)) \quad \forall t \in [0,T].
	$$
\end{lmm}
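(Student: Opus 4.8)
The plan is to track the squared distance $\psi(t):=\dist_{\mathcal B\cap\overline M}(\gamma(t))^2$ along the trajectory, to establish a linear differential inequality $\psi'\le 2c\,\psi$ a.e., and to conclude by Gronwall's lemma. This is the classical route to strong invariance, carried out here directly on the squared distance function rather than through the full proximal-analytic machinery.

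First I would record that $\gamma$ is Lipschitz. Since $\gamma$ is continuous and $\gamma(t)\in M\cap B_r$ for $t\in(0,T)$, the image $\gamma([0,T])$ lies in the compact set $\overline M\cap\overline{B_r}$; the locally bounded multifunction $\Gamma$ is then bounded on this set, say by $K>0$, and $\dot\gamma(t)\in\Gamma(\gamma(t))$ a.e.\ forces $|\dot\gamma(t)|\le K$ a.e., so $\gamma$ is $K$-Lipschitz. Writing $C:=\mathcal B\cap\overline M$ (a nonempty closed set) and using that $x\mapsto\dist_C(x)$ is $1$-Lipschitz, the function $\psi$ is Lipschitz on $[0,T]$, hence absolutely continuous and differentiable a.e.

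The core computation is the derivative estimate. Fix $t\in(0,T)$ at which both $\gamma$ and $\psi$ are differentiable; this holds for a.e.\ $t$. Choose any nearest point $s_t\in\proj_C(\gamma(t))$. For $h>0$ small, $s_t\in C$ gives
\[
\psi(t+h)\le |\gamma(t+h)-s_t|^2=\psi(t)+2\langle\gamma(t)-s_t,\gamma(t+h)-\gamma(t)\rangle+|\gamma(t+h)-\gamma(t)|^2,
\]
and the last term is $\le K^2h^2$; dividing by $h$ and letting $h\to0^+$ yields $\psi'(t)\le 2\langle\gamma(t)-s_t,\dot\gamma(t)\rangle$. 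Since $\dot\gamma(t)\in\Gamma(\gamma(t))$, $\gamma(t)\in M\cap B_r$ and $s_t\in\proj_C(\gamma(t))$, the standing hypothesis gives
\[
\langle\gamma(t)-s_t,\dot\gamma(t)\rangle\le\sup_{\nu\in\Gamma(\gamma(t))}\langle\gamma(t)-s_t,\nu\rangle\le c\,\dist_C(\gamma(t))^2=c\,\psi(t),
\]
so $\psi'(t)\le 2c\,\psi(t)$ for a.e.\ $t\in(0,T)$. Finally, $t\mapsto e^{-2ct}\psi(t)$ is absolutely continuous with a.e.\ nonpositive derivative, hence nonincreasing on $[0,T]$ (extending to the endpoints by continuity), so $\psi(t)\le e^{2ct}\psi(0)$ on $[0,T]$; taking square roots gives $\dist_C(\gamma(t))\le e^{ct}\dist_C(\gamma(0))$, which is the assertion.

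The one genuinely delicate point is the passage to the limit in the difference quotient: one must restrict to the full-measure set of $t$ at which both $\gamma$ and $\psi$ are differentiable, and observe that the resulting bound $c\,\psi(t)$ is independent of the particular nearest point $s_t$ (the hypothesis being quantified over all of $\proj_C(\gamma(t))$), so no measurable selection $t\mapsto s_t$ is needed. I note also that only the local boundedness of $\Gamma$ enters this argument; its local Lipschitz continuity, though assumed in the statement, is used elsewhere (to guarantee existence of such arcs and for Filippov-type estimates), not here.
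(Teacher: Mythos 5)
Your proof is correct: the Lipschitz bound on $\gamma$, the one-sided difference-quotient estimate $\psi'(t)\le 2\langle\gamma(t)-s_t,\dot\gamma(t)\rangle$ at points of joint differentiability, the application of the standing hypothesis, and the Gronwall step are all sound, and you rightly observe that only local boundedness of $\Gamma$ (not its Lipschitz continuity) is used. The paper gives no proof of this lemma, deferring instead to \cite{HZ} (Proposition 4.2) and \cite{HZW} (Lemma 3.4); your squared-distance/Gronwall argument is essentially the standard strong-invariance proof carried out in those references, so there is nothing further to reconcile.
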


\subsection{Proof of Theorem \ref{CHARAsubback}}
\begin{proof}
	First we prove the implication (i) $\Rightarrow$ (ii), that is, if $u:[0,T]\times\R^d\ra\R$ is a lsc function satisfying  the sub-optimality, then  is the bilateral subsolution to \eqref{HJBOmi}-\eqref{HJBHGamma}. 
	
	Let $(t_0,x_0)\in (0,T)\times\R^d$, $k\in\{0,\ldots,m+l\}$ with $x_0\in\M_k$ and $\phi\in C^1((0,T)\times\M_k)$ such that $u|_{\M_k}-\phi$ attains a local minimum at $(t_0,x_0)$.
	We assume  $A_{\M_k}(x_0) \neq \emptyset$, otherwise the claim is trivial. For any $a\in A_{\M_k}(x_0)$, since $G_{\M_k}$ is locally Lipschitz continuous, by Lemma \ref{trajectories}, there exist $\tau>0$, $(y(\cdot),\eta(\cdot))\in C^1((t_0-\tau,t_0]), y(s) \in \M_k$ for any $s \in (t_0-\tau, t_0]$ and a measurable control map $\alpha \, : \, (t_0-\tau, t_0] \to \mathcal{A}$ such that
	\begin{equation}\label{hp1ns}
	\dot y(s)=f(y(s),\alpha(s)),\ \dot \eta(s)\leq -\ell(y(s),\alpha(s))
	\end{equation}
	and
	\begin{equation}\label{hp2ns}
	 y(t_0)=x_0,\ \dot y(t_0)=f(x_0,a),\ \eta(t_0)=0, \ \dot \eta(t_0)=l(x_0,a).
	\end{equation}
	Take $\bar y,\bar \alpha$ satisfying \eqref{DSyback} on $(0,t_0-\tau)$ such and $\bar y(t_0-\tau)=y(t_0-\tau)$ and remark that $\tilde y, \tilde \alpha$ where $\tilde y=\bar y 1_{[0,t_0-\tau)}+y1_{[t_0-\tau, t_0]}$, $\tilde \alpha=\bar \alpha 1_{[0,t_0-\tau)}+\alpha1_{[t_0-\tau, t_0]} $ satisfy \eqref{DSyback} on $(0,t_0)$ with $\tilde y(t_0)=x_0$.
	Therefore, since $u$ satisfies the suboptimality and then by Proposition \ref{DPPback}, $u$ satisfies the backward sub-optimality, we have that 
	\begin{equation}\label{dppns}
	u(t_0,x_0)\geq u(t_0-h,y(t_0-h))-\eta(t_0-h)+\eta(t_0) \quad \forall h \in [0,\tau).
	\end{equation}
	Since $(t_0,x_0)$ is a local minimum of $u|_{\mathcal{M}_k}-\phi$ we have
	\begin{equation}\label{minns}
	u(t_0,x_0)-\phi(t_0,x_0)\leq u(t_0-h,y(t_0-h))-\phi(t_0-h,y(t_0-h))\quad \forall h \in [0,\tau),
	\end{equation}
	and by combining \eqref{dppns} and \eqref{minns} we get
	\begin{equation}\label{phins}
	\phi(t_0-h,y(t_0-h))-\phi(t_0,x_0)-\eta(t_0-h)+\eta(t_0)\leq 0 \quad \forall h \in [0,\tau).
	\end{equation}
	By \eqref{phins}, \eqref{hp1ns} and \eqref{hp2ns} we get
	\[
	-\partial_t\phi(t_0,x_0)-D\phi(t_0,x_0)\cdot f(x_0,a)-\ell(x_0,a)\leq 0,\ \forall\,a\in A_{\M_k}(x_0).
	\]
	from which we conclude
	\[
	-\partial_t\phi(t_0,x_0)+H_{\M_k}(x_0,D\phi(t_0,x_0))\leq 0,
	\]
	that is, $u$ is a bilateral subsolution to \eqref{HJBOmi}-\eqref{HJBHGamma}.
	
	\smallskip
	
	Now we prove the implication (ii) $\Rightarrow$ (i). We will prove that,  if  $u:[0,T]\times\R^d\ra\R$ is a lsc bilateral subsolution to \eqref{HJBOmi}-\eqref{HJBHGamma} and if $(y(\cdot),\eta(\cdot))$ satisfies \eqref{DIyeta} on some $[a,b]\subset [0,T]$,
		 it holds that
		\begin{equation}\label{nobackfine}
		u(a,y(a))-\eta(a)\leq u(b,y(b))- \eta(b).
		\end{equation}
	Note that  the suboptimality follows from \eqref{nobackfine} by the same arguments used in \textbf{Step 4} of the proof of Theorem \ref{CHARAsub}.
	We recall that for any $(x,t) \in \R^d \times [0,T]$, we denote by $\mathcal{S}_t^T(x)$ any trajectory satisfying \eqref{DSy}.
We divide the proof into three steps.
In \textbf{Step 1} we treat the case of trajectories staying on one subdomain in Proposition \ref{PROsubmk2}. Then in \textbf{Step 2} we deal with the regular trajectories and finally in \textbf{Step 3} we deal with non regular trajectories.
	
	\begin{step}{{\bf 1}-Trajectories in a subdomain.}
	\upshape
	
	\begin{prpstn}\label{PROsubmk2}
		Let {\bf(H1)}, {\bf (HF)}, {\bf (HL)}, {\bf(HG)} hold. Let $u$ be a lsc bilateral subsolution to \eqref{HJBOmi}-\eqref{HJBHGamma}, $k\in\{0,\ldots,m+l\}$ and $(y(\cdot),\eta(\cdot))$ satisfying \eqref{DIyeta} on some $[a,b]\subset [0,T]$ with $y(s)\in\M_k$ for $s\in[a,b]$.
		Then it holds that
		\begin{equation}\label{onlyonmk}
		u(a,y(a))-\eta(a)\leq u(b,y(b))- \eta(b).
		\end{equation}
	\end{prpstn}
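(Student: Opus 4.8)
The plan is to mirror the argument used for Proposition~\ref{PROsubmk}, adapting it to the lower semicontinuous, bilateral framework. Since $u$ is lsc and the one-sided information provided by Definition~\ref{DEFBvsub} concerns local \emph{minima} of $u|_{\M_k}-\phi$, I will run the trajectory \emph{backward in time} and work with the (closed) epigraph $\Ep(u)$ rather than with the hypograph. First I would observe that, as $y(s)\in\M_k$ for all $s\in[a,b]$ and $\M_k$ is a $C^1$ manifold, $\dot y(s)$ is tangent to $\M_k$ for a.e.\ $s$, so \eqref{DIyeta} gives $(\dot y(s),\dot\eta(s))\in G(y(s))\cap\big(\T_{\M_k}(y(s))\times\R\big)=G_{\M_k}(y(s))$ a.e.\ on $(a,b)$. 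If $a=b$ the claim is trivial; if $G_{\M_k}$ has empty images no absolutely continuous arc can remain in $\M_k$ on a non-degenerate interval, so \eqref{onlyonmk} holds vacuously. Hence I may assume $G_{\M_k}$ has nonempty images, in which case it is locally Lipschitz on $\overline{\M_k}$ — automatically when $\M_k=\Omega_i$ by {\bf (HF)}(ii), {\bf (HL)}(ii), and by Proposition~\ref{PROPH3}(i) when $\M_k=\Gamma_k$ (note {\bf (H3)} is in force in the setting where this proposition is used).

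Set $\xi:=u(b,y(b))-\eta(b)$ and consider, assuming first $[a,b]\subset(0,T)$, the arc $\gamma(\tau):=\big(b-\tau,\,y(b-\tau),\,\eta(b-\tau)+\xi\big)$ for $\tau\in[0,b-a]$; by the previous paragraph $\dot\gamma(\tau)\in\{-1\}\times\big(-G_{\M_k}(y(b-\tau))\big)$ a.e., and $\gamma(0)=(b,y(b),u(b,y(b)))$ belongs to $\Ep(u)\cap(\R\times\overline{\M_k}\times\R)$. The key point is that this set is strongly invariant under the locally Lipschitz multifunction $(t,x,z)\mapsto\{-1\}\times(-G_{\M_k}(x))$; I would prove it by Lemma~\ref{strinv} (equivalently \cite[Theorem~4.3.8]{CLSW}). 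The proximal condition to be checked is that, at a boundary point $(t_0,x_0,u(t_0,x_0))$ with $x_0\in\M_k$, every proximal normal — which by \eqref{relation} is, up to a vector in $\{0\}\times\T_{\M_k}(x_0)^{\bot}\times\{0\}$ orthogonal to all velocities of $-G_{\M_k}(x_0)$, of the form $(\zeta_t,\zeta_x,-1)$ with $(\zeta_t,\zeta_x)\in\partial_P(u|_{\M_k})(t_0,x_0)$ — satisfies, for every $(q,q')\in G_{\M_k}(x_0)$,
\[
\big\langle(\zeta_t,\zeta_x,-1),(-1,-q,-q')\big\rangle=-\zeta_t-\zeta_x\cdot q+q'\ \le\ -\zeta_t+H_{\M_k}(x_0,\zeta_x)\ \le\ 0 ,
\]
where the first inequality holds because $H_{\M_k}(x_0,\zeta_x)=\sup_{a\in A_{\M_k}(x_0)}\{-\zeta_x\cdot f(x_0,a)-\ell(x_0,a)\}\ge-\zeta_x\cdot q+q'$ (writing $q=f(x_0,a)$, $q'\le-\ell(x_0,a)$ with $a\in A_{\M_k}(x_0)$, by the definitions of $G_{\M_k}$ and $H_{\M_k}$) and the second is precisely the bilateral subsolution inequality of Definition~\ref{DEFBvsub}. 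In the distance formulation of Lemma~\ref{strinv} the same computation carried out at the projection point produces the admissible quadratic remainder, with constant controlled by the local Lipschitz constant of $G_{\M_k}$. Granting the invariance, $\gamma(\tau)\in\Ep(u)$ for all $\tau$, i.e.\ $\eta(b-\tau)+\xi\ge u(b-\tau,y(b-\tau))$, and taking $\tau=b-a$ yields $u(a,y(a))-\eta(a)\le\xi=u(b,y(b))-\eta(b)$, which is \eqref{onlyonmk}.

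The main obstacle I anticipate is the careful bookkeeping in the proximal/normal-cone step: matching $N^P_{\Ep(u)}$ with the tangential proximal subdifferential of $u|_{\M_k}$ up to the boundary of $\M_k$ (handled via the continuous extension of $G_{\M_k}$ to $\overline{\M_k}$ provided by Definition~\ref{DEFG}), controlling the closest-point quadratic error in Lemma~\ref{strinv}, and excluding ``horizontal'' proximal normals of $\Ep(u)$ over $\M_k$ — these being inadmissible because the bilateral subsolution property, combined with the tangential controllability built into {\bf (H3)}, forces $u|_{\M_k}$ to be locally Lipschitz continuous. Finally, the endpoint cases $a=0$ or $b=T$ are dealt with, as in Proposition~\ref{PROsubmk}, by continuity of $y,\eta$ and lower semicontinuity of $u$.
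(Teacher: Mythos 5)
Your strategy coincides with the paper's own proof: both run the trajectory backward in time and establish strong invariance of the epigraph of $u$ (restricted over $\overline{\M_k}$, augmented by a coordinate carrying the running cost) under the locally Lipschitz multifunction $\{-1\}\times(-G_{\M_k})$, verifying the proximal criterion of Lemma~\ref{strinv} by converting non-horizontal proximal normals $(\zeta_t,\zeta_x,-1)$ into proximal subgradients of $u|_{\M_k}$ via \eqref{relation} and feeding them into the bilateral subsolution inequality. Your pairing computation $\langle(\zeta_t,\zeta_x,-1),(-1,-q,-q')\rangle\leq -\zeta_t+H_{\M_k}(x_0,\zeta_x)\leq 0$ is exactly the paper's estimate, and your handling of the vacuous case (empty $G_{\M_k}$, via Proposition~\ref{PROPH3}(i)) is if anything more careful than the paper's citation of Proposition~\ref{PROPGM}.

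The genuine gap is your treatment of \emph{horizontal} proximal normals, i.e.\ proximal normals to $\Ep(u)$ whose last component vanishes. You propose to exclude them by asserting that the bilateral subsolution property together with the tangential controllability of {\bf (H3)} forces $u|_{\M_k}$ to be locally Lipschitz. This is not justified: {\bf (H3)} is not among the hypotheses of the proposition, and even where $B(0,r_2)\subset F(x)$ holds the subsolution inequality at a proximal subgradient $(\zeta_t,\zeta_x)$ only yields an estimate of the form $r_2|\zeta_x|\leq \zeta_t+C$, which leaves $\zeta_t$ (and hence $\zeta_x$) unbounded and gives no Lipschitz bound; indeed the whole point of Theorem~\ref{THMBilateral} is that $u$ may be genuinely discontinuous, and a merely lsc function does have horizontal proximal normals to its epigraph at points of discontinuity. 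The invariance criterion must therefore be verified for these normals too. The paper does this via Rockafellar's horizontal approximation theorem (\cite{rock}, Theorem 11.30): every horizontal proximal normal at $(t,x,u(t,x))$ is a limit of non-horizontal proximal normals at nearby points of the epigraph, where your computation applies, and one passes to the limit using the continuity of the multifunction $\mathcal{G}_k$. Replacing your exclusion claim by this limiting argument closes the gap; the rest of your proof matches the paper.
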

	\begin{proof}
\upshape
	We consider the backward augmented dynamic defined for any $x \in \M_k$ as follows 
	$$
  G_{\M_k}(x)=
\{ -(f(x,a),
 l(x,a)+r), a \in \A_{\M_k}(x), 0\leq r \leq b(x,a)\}.
$$
Note that the mapping $G_k$ has convex compact images  by \textbf{(HG)},  has nonempty images and is locally Lipschitz by Proposition \ref{PROPGM}.  Set $M_k=\R\times \M_k\times \R^2$ and define  $$\mathcal{G}_k(t,x,z,w)= \{-1\}\times G_{\M_k}(x)\times \{0\}, \quad \forall (t,x,z,w) \in [0,T]\times \M_k \times \R^2. $$ 
Note that $M_k$ is an embedded manifold of $\R^{d+3}$ and $\mathcal{G}_k$ satisfies the same assumptions of $G_{\M_k}$.   Consider the closed set $\mathcal{S}_k=\mathcal{E}p(u_k)$ where  $\forall (t,x,z) \in [0,T] \times \overline{\M}_k \times \R$
	$$
 \left\{
 \begin{array}{ll}
  u_k(t,x,z)=u(x)+z &\mbox{ if } x \in \overline{\M}_k,\\
  +\infty & \mbox{ otherwise }.
 \end{array}
 \right.
$$
	Note that, if $u$ is  a l.s.c. bilateral subsolution of \eqref{HJBOmi}-\eqref{HJBHGamma}, the following  hold
	\begin{equation}\label{claimpr}
	\sup_{\nu \in \mathcal{G}_k(t,x,z,w)} (\eta,\nu) \leq 0 \quad \forall (t,x,z,w) \in \mathcal{S}_k, \forall \eta \in \mathcal{N}_{\mathcal{S}_k}^P(t,x,z,w).
	\end{equation}
	Indeed, if $\mathcal{S}_k=\emptyset$, it holds by vacuity. Otherwise, take $(t,x,z,w) \in \mathcal{S}_k$ and a proximal normal $(\xi, -p) \in \mathcal{N}_{\mathcal{S}_k}^P(t,x,z,w)$. Therefore we have $p \geq 0$ since $\mathcal{S}_k$ is the epigraph of a function. Consider $p>0$, then $w=u_k(t,x,z)$ and by \eqref{relation} we have 
	$$
	\frac{1}{p}\xi \in \partial_P u_k(t,x,z) \subseteq \partial_p u_k(t,x) \times \{1\},
	$$
	and then for any $\nu \in \mathcal{G}_k(t,x,z,w)$, for some $\alpha \in \mathcal{A}_{\M_k}(x)$, $r\geq 0$ and for $(\theta, \zeta) \in \partial_P u_k(t,x)$ we get
	
	\begin{eqnarray}
	<(\xi,-p),\nu>&=&p(-\theta-<\zeta,f(x,\alpha)>-\ell(x,\alpha)-r)\nonumber \\&\leq  &-\theta+ \sup_{\alpha \in \mathcal{A}_{\M_k}} \{-<\zeta, f(x,\alpha)> -\ell(x,\alpha)\}.
	\end{eqnarray}
	
	Since $u$  is subsolution  of \eqref{HJBOmi}-\eqref{HJBHGamma} and $\nu \in \mathcal{G}_k(t,x,z,w)$ is arbitrary, we can take the supremum over $v$ and obtain the desired inequality.
	If $p=0$, we use the Rockafellar's horizontal Theorem (cf. \cite{rock}, Theorem $11.30$) and the continuity of $\mathcal{G}_k$ to obtain \eqref{claimpr} for any $\eta$.\\
	Now take $[a,b]\subseteq [0,T]$ and $y \in \mathcal{S}_a^b(x)$ as in the statement.
	Let $r>\tilde{r}>0 $ be large enough so that $y([a,b])\subseteq B_r$ and
	$$
	\sup_{X \in M\cap B_{\tilde r}} |\mbox{ proj }_{\mathcal{S}_k}(X)|<R.
	$$
	Let $L_k$ be the Lipschitz constant for $\mathcal{G}_k $ on $M_k \cap B_r$. Note that $\mathcal{S}_k \cap \bar{M}=\mathcal{S}_k$ and $X-\mbox{ proj }_{\mathcal{S}_k}(X) \in \mathcal{N}_{\mathcal{S}_k}^P(X)$ for any $X \in \mathcal{S}_k$. Therefore, \eqref{claimpr} implies \eqref{relation} with $c=L_k$. Then, by Lemma \ref{strinv} we have that for any absolutely continuous arc $\gamma\, : \, [a,b] \rightarrow \overline{M}_k$ which satisfies
	$$
	\dot{\gamma} \in \mathcal{G}_k(\gamma) \mbox{ a.e. on } [a,b] \quad \mbox{ and } \gamma(s) \in M_k \cap B_r \quad \forall s \in (a,b),
	$$
	the following holds
	\begin{equation}\label{dist}
	\mbox{dist }_{\mathcal{S}_k}(\gamma(s)) \leq e^{L_ks} \mbox { dist }_{\mathcal{S}_k}(\gamma(a)) \quad \forall s \in [a,b].
	\end{equation}
	Let $\alpha \in \mathcal{A}(x)$ be the control associated to the trajectory $y$. Note that the absolutely continuous arc defined by
	$$
	\gamma_y(s)=\left((a+b-s,y(a+b-s),-\int_{a}^s \ell(y(a+b-t), \alpha(a+b-t) dt ,u(b, y(b))\right), \quad \forall s \in [a,b]
	$$
	fulfills the condition for \eqref{dist} to holds.
	Finally, since $\gamma_y(a) \in \mathcal{S}_k$, \eqref{dist} implies that $\gamma_y(b) \in \mathcal{S}_k$ which leads to \eqref{onlyonmk} after some algebraic steps.
\end{proof}
	\end{step}
		\begin{step}{{\bf 2}-Regular trajectories.}
		\upshape
	 We take $[a,x] \in [0,T]\times \R^d$, and $y \in \mathcal{S}^T_a(x)$ for which there
	exists a partition of $[a,T], a= t_0 < t_1 < \cdots < t_n < t_{n+1} = T$, so
	that for any  $l \in \{0,\dots, n\}$ we can find $k$ such that $y(s)\in \M_k$ on
	$(t_l, t_{l+1})$. Then by applying Proposition \ref{PROsubmk2} on each subinterval  $(t_l, t_{l+1})$, we get for any $b \subseteq [a,T]$
	\[
	u(a,y(a))-\eta(a)\leq u(b,y(b))- \eta(b).
	\]
	\end{step}
	\begin{step}{{\bf3}- Non regular trajectories.}
\upshape
We use the following lemma, which is proved in \cite{HZW}, Lemma $3.3$.
	
	\begin{lmm}\label{tretre}
	Assume {\bf(H1)}, {\bf (Hf)}, {\bf (Hl)}, {\bf(Hg)}, {\bf (H3)}. Let $(t,x) \in [0,T] \times \R^d$ and $y(\cdot) \in \mathcal{S}_{t}^T(x)$ be given, then for any $\e>0$ and $\tau \in [t,T]$ we can find $x_{\e} \in B(x,\e)$, $t_{\e} \in (t-\e, t+\e) \cap [0, \tau]$ and $y_{\e} \in S_{t_{\e}}^{\tau}(x_\e)$  that verifies $y_{\e}(\tau)=y(\tau)$ and that is regular in the following sense:\\
		There exists a partition of $[t,\tau]$, $\{t=t_0<t_1<\cdots<t_n<t_{n+1}=\tau\}$, so that for any $l \in \{0,\cdots, n\}$ we can find $k$ such that $y_\e(s) \in \M_k$ on $(t_l, t_{l+1})$.
	\end{lmm}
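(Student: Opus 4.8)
The plan is to build $y_\e$ by a backward recursion from the fixed endpoint $(\tau,y(\tau))$, regularizing the trajectory one interface stratum at a time, from the highest codimension down, while tracking at each step how far the new initial point and time drift from $(t,x)$. (Working backward is natural here since the lemma keeps $(\tau,y(\tau))$ fixed and is allowed to move $(t,x)$.) First I would record the obvious building blocks: on each open cell $\Om_i$ the dynamics $F_i$ is locally Lipschitz, so any sub-arc of $y$ contained in one cell is already \emph{regular}, and the set $\{\,s\in[t,\tau]\,:\,y(s)\in\Gamma\,\}$ is closed, hence its complement is a countable union of open intervals on each of which $y$ lives in a single cell. The only trouble is the closed contact set with $\Gamma$, which I would stratify further by codimension: $\{s:y(s)\in\Gamma^0_k\}$ sits inside $\{s:y(s)\in\Gamma_k\}$, and under {\bf (H1)} (``parallel or perpendicular'') these are the only two levels, so the downward induction on stratum dimension is short, its base case (all of $y$ in one open cell) being trivial.

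The inductive step near a codimension-$k$ stratum $\Gamma_j$ splits according to {\bf (H3)}. If $F(x)\cap\mathcal T_{\Gamma_j}(x)=\emptyset$ on $\Gamma_j$, then at a.e.\ contact time $s$ one would have $\dot y(s)\in F(y(s))\cap\mathcal T_{\Gamma_j}(y(s))=\emptyset$, so the $\Gamma_j$-contact set has Lebesgue measure zero and every contact is transversal (the normal component of $\dot y$ is bounded away from $0$ in a neighbourhood of $\Gamma_j$); a Filippov-type estimate then produces a nearby trajectory whose contact set with $\Gamma_j$ is finite, and between consecutive contacts the arc lies in a single cell, so it is regular, the endpoint being displaced by at most a Lipschitz constant times the (small) total length of the modification. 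If instead $B(0,r_2)\subset F(x)$ on $\Gamma_j$, I would invoke Proposition \ref{PROPH3}(ii): any short sub-arc of $y$ that starts and ends on $\overline{\Gamma_j}$ lies in $\mathcal R(\cdot;\delta)\cap\overline{\Gamma_j}$, hence in $\bigcup_{s\le\Delta_j\delta}\mathcal R_j(\cdot;s)$, so it can be replaced by a tangent arc staying in $\overline{\Gamma_j}$ with duration only a bounded factor $\Delta_j$ larger; splicing in $C^1$ connectors at the two ends via Lemma \ref{trajectories}, this collapses a whole cluster of excursions leaving and re-entering $\Gamma_j$ (even infinitely many) into one tangent piece plus boundedly many regular connectors. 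Iterating over the finitely many strata, and truncating the clusters to finitely many pieces after absorbing the remainder -- of small total time-measure, chosen exactly as in the proof of Proposition \ref{PROkey} -- into such tangent arcs, yields a regular $y_\e$, and the accumulated $\|x_\e-x\|$ and duration change $|t_\e-t|$ stay below $\e$ by a Gronwall bound together with those small-measure choices.

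The main obstacle is precisely the Zeno phenomenon: $y$ may meet $\Gamma$ infinitely often in finite time, so there is no a priori finite partition to start from and no uniform Lipschitz control is available across the interface. The device that defeats it is Proposition \ref{PROPH3}(ii): the time-dilated reachability inclusion $\mathcal R(x;t)\cap\overline{\Gamma_j}\subseteq\bigcup_{s\in[0,\Delta_jt]}\mathcal R_j(x;s)$ says exactly that whatever a wildly chattering arc achieves while returning to $\Gamma_j$ is also achieved by a single tangent arc with only a bounded time penalty $\Delta_j$, which makes the ``collapse a cluster into one piece'' step both possible and quantitatively harmless. The remaining care is bookkeeping: covering the (measure-zero, compact) accumulation set of the bad switching times by finitely many short windows, ordering the regularizations by increasing stratum dimension so an already-regularized cell-piece is never re-broken, and choosing the truncation thresholds small enough that the compounded displacement over the boundedly many steps stays within the prescribed $\e$.
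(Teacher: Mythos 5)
You should first be aware that the paper does not actually prove Lemma \ref{tretre}: it is imported verbatim from \cite{HZW} (Lemma 3.3 there), so there is no in-paper argument to compare against. Your overall strategy is nevertheless the right one and matches the cited proof in spirit: split according to the two alternatives of \textbf{(H3)} on each stratum, show that contacts are finite when the tangent dynamics is empty, collapse chattering clusters into tangent arcs via the time-dilated reachability inclusion of Proposition \ref{PROPH3}(ii) together with the small-measure truncation of the excursion intervals exactly as in Proposition \ref{PROkey}, induct on the dimension of the strata, and absorb the accumulated time dilation into the initial datum $(t_\e,x_\e)$. That skeleton is correct.

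There are, however, two points where the execution as you describe it would fail. First, in the case $F(x)\cap\mathcal T_{\Gamma_j}(x)=\emptyset$ you propose a ``Filippov-type estimate'' to produce a nearby trajectory with finitely many contacts; but any such perturbation moves the terminal point away from $y(\tau)$, and on this alternative of \textbf{(H3)} there is no controllability anywhere with which to restore it. No perturbation is needed: since $G$, hence $F$, has convex compact images and $F$ is usc, the separation of $F(x)$ from $\mathcal T_{\Gamma_j}(x)$ forces the normal component of \emph{every} admissible velocity to be bounded away from zero in a neighbourhood of each point of $\Gamma_j$, so every contact of the original $y$ with $\Gamma_j$ is transversal and isolated, and the contact set is already finite. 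Second, and more seriously, your splicing step is not realizable as stated: you regularize each excursion piece by the inductive hypothesis, which moves its \emph{initial} point into a ball $B(y(a_n),\e')$, and then propose to join the pieces by ``$C^1$ connectors via Lemma \ref{trajectories}''. That lemma only produces a trajectory with a prescribed initial velocity inside a stratum; it gives no point-to-point steering, and the perturbed initial point generically lies \emph{off} $\Gamma_j$, where \textbf{(H3)} provides no controllability at all, so no admissible connector exists. The construction has to be organized so that this situation never arises: working backward from $(\tau,y(\tau))$, each Filippov replacement on a cluster $[b_n,a_{n+1}]$ starts at the original point $y(b_n)$ and is corrected to end \emph{exactly} at the original point $y(a_{n+1})$ by a straight segment \emph{inside} the (convex, controllable) stratum $\Gamma_j$ at speed at most $r_2$; every interior junction is thus an unperturbed point of $y$ lying on the interface, only the durations are dilated by $\sum_n\delta_n=O(\meas(J\setminus J_p))$, and the entire perturbation is pushed to the initial time (or, if $t-\sum_n\delta_n<0$, to a truncation of the first arc, which is what produces $x_\e\neq x$). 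With that reorganization your argument closes; without it the induction does not splice.
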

	
	Then we treat non regular trajectories by  applying Lemma \ref{tretre} as follows.
	Let $(a, x) \in [0, T]\times \R^d$, $b \in [a, T]$ and $y \in \mathcal{S}^T_a(x)$ and take a sequence $\e_n \subseteq (0,1)$ with $\e_n \to 0$. Let $x_n \in \R^d, t_n \in [0,T]$ and $y_n \in \mathcal{S}_{t_n}^{b}(x_n)$ given by Lemma \ref{tretre} with $\e=\e_n$. Then we have that
	\[
	u(t_n,x_n)-\eta(t_n)\leq u(b,y(b))-\eta(b).
	\]
	Then, since $x_n \to x, t_n \to a $, and by the lower semi-continuity of $u$,  we get \eqref{nobackfine} and we conclude the proof.
	\end{step}
	
\end{proof}

\end{document}